\numberwithin{equation}{section}
\newtheorem{theorem}{Theorem}
\newtheorem{proposition}{Proposition}
\newtheorem{lemma}{Lemma}
\theoremstyle{definition}
\newtheorem{remark}{Remark}
\newtheorem{definition}{Definition}
\newcommand{\C}{\mathbb{C}}
\newcommand{\R}{\mathbb{R}}
\newcommand{\Z}{\mathbb{Z}}
\newcommand{\sgn}{\mathop{\mathrm{sgn}}\nolimits}
\newcommand{\dual}[2]{\langle #1, #2\rangle}
\title{Bifurcation from semi-trivial standing waves and ground states 
for a system of nonlinear Schr\"odinger equations}
\author{Mathieu Colin\thanks{Institut de Math\'ematiques de Bordeaux, 
Universit\'e de Bordeaux and INRIA Bordeaux-Sud Ouest, EPI MC2, 
351 cours de la lib\'eration, 33405 Talence Cedex, France 
({\tt mcolin@math.u-bordeaux1.fr}).}
\hspace{1mm} and \hspace{1mm} 
Masahito Ohta\thanks{Institut de Math\'ematiques de Bordeaux, 
Universit\'e Bordeaux 1. 
Permanent address: Department of Mathematics, Faculty of Science, 
Saitama University, Saitama 338-8570, Japan 
({\tt mohta@mail.saitama-u.ac.jp}).}}
\begin{document}

\date{}
\maketitle

\begin{abstract}
We consider a system of nonlinear Schr\"odinger equations 
related to the Raman amplification in a plasma. 
We study the orbital stability and instability of standing waves 
bifurcating from the semi-trivial standing wave of the system. 
The stability and instability of the semi-trivial 
standing wave at the bifurcation point are also studied. 
Moreover, we determine the set of the ground states completely. 
\end{abstract}

\section{Introduction}

\subsection{Motivation}

In this paper, we consider the following system of nonlinear Schr\"odinger equations 
\begin{equation}\label{eq:co}
\left\{\begin{array}{l}
i\partial_tu_1=-\Delta u_1-\kappa |u_1|u_1-\gamma \overline{u_1}u_2 \\
i\partial_tu_2=-2\Delta u_2-2|u_2|u_2-\gamma u_1^2
\end{array}\right.
\end{equation}
for $(t,x)\in \R\times \R^N$, 
where $u_1$ and $u_2$ are complex-valued functions of $(t,x)$, 
$\kappa\in \R$ and $\gamma>0$ are constants and $N\le 3$. 
System \eqref{eq:co} is a reduced system studied in \cite{CCO1,CCO2} 
and related to the Raman amplification in a plasma. 
Roughly speaking, the Raman amplification is an instability phenomenon 
taking place when an incident laser field propagates into a plasma. 
We refer to \cite{CC1,CC2} for a precise description of the phenomenon.
A similar system to \eqref{eq:co} also appears as an optics model 
with quadratic nonlinearity (see \cite{yew}). 

In \cite{CCO1,CCO2}, the authors studied the following three-component system
\begin{equation}\label{eq:cco}
\left\{\begin{array}{l}
i\partial_tv_1=-\Delta v_1-|v_1|^{p-1}v_1-\gamma v_3\overline{v_2} \\
i\partial_tv_2=-\Delta v_2-|v_2|^{p-1}v_2-\gamma v_3\overline{v_1} \\
i\partial_tv_3=-\Delta v_3-|v_3|^{p-1}v_3-\gamma v_1v_2,
\end{array}\right.
\end{equation}
where $1<p<1+4/N$ and $N\le 3$. Let $\omega>0$ and 
let $\varphi_{\omega}\in H^1(\R^N)$ be a unique positive radial solution of 
\begin{equation}\label{scaP}
-\Delta \varphi+\omega \varphi-|\varphi|^{p-1}\varphi=0, \quad x\in \R^N. 
\end{equation}
Then, $(0,0,e^{i\omega t}\varphi_{\omega})$ solves \eqref{eq:cco}. 
We note that $e^{i\omega t}\varphi_{\omega}$ is a standing wave solution 
of the single nonlinear Schr\"odinger equation
\begin{equation}\label{nls}
i\partial_tu=-\Delta u-|u|^{p-1}u, \quad (t,x)\in \R\times \R^N, 
\end{equation}
and that $e^{i\omega t}\varphi_{\omega}$ is orbitally stable 
for \eqref{nls} if $1<p<1+4/N$, 
and it is unstable if $1+4/N\le p<1+4/(N-2)$ 
(see \cite{BC,CL} and also \cite[Chapter 8]{caz}). 
In \cite{CCO1,CCO2}, the authors proved the following result 
on the semi-trivial standing wave solution 
$(0,0,e^{i\omega t}\varphi_{\omega})$ of \eqref{eq:cco}. 

\vspace{2mm} \noindent
{\bf Theorem 0.} (\cite{CCO1,CCO2}) \hspace{1mm}\textit{
Let $N\le 3$, $1<p<1+4/N$, $\omega>0$, 
and let $\varphi_{\omega}$ be the positive radial solution of \eqref{scaP}. 
Then, there exists a positive constant $\gamma^*$ such that 
the semi-trivial standing wave solution $(0,0,e^{i\omega t}\varphi_{\omega})$ 
of \eqref{eq:cco} is stable if $0<\gamma<\gamma^*$, 
and it is unstable if $\gamma>\gamma^*$.
}\vspace{2mm} 

By the local bifurcation theorem by Crandall and Rabinowitz \cite{CR}, 
it is easy to see that $\gamma=\gamma^*$ is a bifurcation point. 
We are interested in the structure of the bifurcation 
from the semi-trivial standing wave of \eqref{eq:cco} and its stability property. 
However, this problem is difficult to study in the general case $1<p<1+4/N$, 
so we consider the special case $p=2$. 
Moreover, since $v_1$ and $v_2$ play the same role in the proof of Theorem 0, 
we consider a reduced system \eqref{eq:co} assuming $v_1=v_2$ in \eqref{eq:cco}. 
We also introduce a parameter $\kappa$ in the first equation of \eqref{eq:co}, 
which makes the structure of standing wave solutions richer as we will see below. 
We remark that the positive constant $\gamma^*$ in Theorem 0 is given by 
\begin{equation}\label{gamma*}
\gamma^*=\inf\left\{\frac{\|\nabla v\|_{L^2}^2+\omega \|v\|_{L^2}^2}
{\int_{\R^N}\varphi_{\omega}(x)|v(x)|^2\,dx}:
v\in H^1(\R^N)\setminus\{0\}\right\}.
\end{equation}
For the case $p=2$, since $\varphi_{\omega}$ is the positive radial solution of 
\begin{equation}\label{scalar}
-\Delta \varphi+\omega \varphi-|\varphi|\varphi=0, \quad x\in \R^N, 
\end{equation}
we see that the infimum in \eqref{gamma*} is attained at 
$v=\varphi_{\omega}$ and $\gamma^*=1$. 
In the same way as the proof of Theorem 0, we can prove the following. 

\begin{theorem}\label{thm1}
Let $N\le 3$, $\kappa\in \R$, $\gamma>0$, $\omega>0$, 
and let $\varphi_{\omega}$ be the positive radial solution of \eqref{scalar}. 
Then, the semi-trivial standing wave solution $(0,e^{2i\omega t}\varphi_{\omega})$ 
of \eqref{eq:co} is stable if $0<\gamma<1$, and it is unstable if $\gamma>1$. 
\end{theorem}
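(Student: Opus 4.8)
The plan is to carry out the Grillakis--Shatah--Strauss / Cazenave--Lions scheme that underlies Theorem~0, the only genuinely new ingredient being the evaluation of the threshold constant. First I record the Hamiltonian structure. System \eqref{eq:co} is globally well posed in $H^1(\R^N)\times H^1(\R^N)$ for $N\le 3$: the quadratic nonlinearities are energy-subcritical, and being $L^2$-subcritical a Gagliardo--Nirenberg estimate turns conservation of the charge $Q(u_1,u_2)=\|u_1\|_{L^2}^2+\|u_2\|_{L^2}^2$ (the invariant of the gauge action $(u_1,u_2)\mapsto(e^{i\theta}u_1,e^{2i\theta}u_2)$) and of the energy
\[
E(u_1,u_2)=\|\nabla u_1\|_{L^2}^2+\|\nabla u_2\|_{L^2}^2-\frac{2\kappa}{3}\|u_1\|_{L^3}^3-\frac23\|u_2\|_{L^3}^3-\gamma\,\mathrm{Re}\!\int_{\R^N}\overline{u_1}^2u_2\,dx
\]
into a uniform $H^1\times H^1$ bound. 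With $S_\omega:=E+\omega Q$, equation \eqref{scalar} says exactly that $\Phi_\omega:=(0,\varphi_\omega)$ is a critical point of $S_\omega$, and what must be shown is orbital stability of $\Phi_\omega$ modulo the action of phase rotations and spatial translations.

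The point that makes the semi-trivial case tractable is that the coupling $\overline{u_1}^2u_2$ contributes only at cubic order in the perturbation $w=(w_1,w_2)$ of $\Phi_\omega$, so the Hessian $S_\omega''(\Phi_\omega)$ is block-diagonal: on the real and imaginary parts of $w$ it acts as $\mathrm{diag}(M_+,M_-,L_+,L_-)$ with
\[
M_\pm=-\Delta+\omega\mp\gamma\varphi_\omega,\qquad L_+=-\Delta+\omega-2\varphi_\omega,\qquad L_-=-\Delta+\omega-\varphi_\omega,
\]
where $(L_+,L_-)$ is precisely the linearization of \eqref{scalar} at $\varphi_\omega$. Now $M_->0$ always, while $M_+$ is coercive if and only if $\gamma$ is strictly less than the infimum in \eqref{gamma*}, which, as noted before the theorem, for $p=2$ is attained at $v=\varphi_\omega$ and equals $1$. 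Hence $M_+>0$ for $0<\gamma<1$, whereas for $\gamma>1$ one has $\langle M_+\varphi_\omega,\varphi_\omega\rangle=(1-\gamma)\int_{\R^N}\varphi_\omega^3\,dx<0$, so $M_+$ has a negative eigenvalue below its essential spectrum $[\omega,\infty)$. On the other side, $2<1+4/N$ for $N\le3$, so the scalar standing wave is subcritical: $L_-\ge0$ with kernel $\mathrm{span}\{\varphi_\omega\}$, $L_+$ has a single negative eigenvalue with kernel $\mathrm{span}\{\partial_1\varphi_\omega,\dots,\partial_N\varphi_\omega\}$, and the slope condition $\frac{d}{d\omega}\|\varphi_\omega\|_{L^2}^2=(2-N/2)\,\omega^{1-N/2}\|\varphi_1\|_{L^2}^2>0$ (from $\varphi_\omega(x)=\omega\varphi_1(\sqrt\omega\,x)$) holds, which makes $\langle L_+p,p\rangle+\langle L_-q,q\rangle$ coercive on the finite-codimension subspace of pairs $(p,q)$ that are $L^2$-orthogonal to the $\partial_j\varphi_\omega$ and to $\varphi_\omega$.

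For $0<\gamma<1$ this yields $\langle S_\omega''(\Phi_\omega)w,w\rangle\ge\delta\|w\|_{H^1\times H^1}^2$ for every $w$ whose second component is $L^2$-orthogonal to $i\varphi_\omega$, to the $\partial_j\varphi_\omega$, and to $\varphi_\omega$ — the first component needs no constraint, and the $\varphi_\omega$-direction is ultimately controlled by conservation of $Q$ rather than by orthogonality. The remainder is the classical argument: given data $\varepsilon$-close to the orbit, pick the phase and translation parameters so that the second component of the perturbation satisfies the orthogonality relations, then combine conservation of $E$ and $Q$ with this coercivity and with $S_\omega(\Phi_\omega)$ being a strict local minimum of $S_\omega$ on $\{Q=Q(\Phi_\omega)\}$ modulo symmetries; a continuity/contradiction argument bounds the deviation from the orbit. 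Since $\kappa$ enters only at cubic order it plays no role here, consistent with the theorem being stated for all $\kappa\in\R$.

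For $\gamma>1$ the decoupling persists at the level of the flow: in the rotating frame the first-component perturbation $w_1=p_1+iq_1$ satisfies $\partial_tq_1=-M_+p_1$, $\partial_tp_1=M_-q_1$, hence $\partial_t^2p_1=-M_-M_+p_1$; since $M_->0$, the operator $M_-M_+$ is conjugate to the self-adjoint $M_-^{1/2}M_+M_-^{1/2}$ and so (by Sylvester's law) has at least one negative eigenvalue, producing a real exponentially growing solution of the linearized equation and hence spectral, and then orbital, instability of $\Phi_\omega$; equivalently, from a negative direction $\psi$ of $M_+$ one builds a $Q$-preserving perturbation $(\varepsilon\psi,\lambda_\varepsilon\varphi_\omega)$ with $E(\varepsilon\psi,\lambda_\varepsilon\varphi_\omega)<E(\Phi_\omega)$ for small $\varepsilon>0$ and applies the Grillakis--Shatah--Strauss instability criterion. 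I expect the stability half to be the real work: setting up the modulation and making the contradiction argument close while simultaneously handling the $N+2$ kernel/slope directions of the $(L_+,L_-)$-block and verifying that coercivity of the $M_\pm$-block genuinely neutralizes the first component; the identity $\gamma^*=1$, by contrast, is immediate from \eqref{gamma*}.
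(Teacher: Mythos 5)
Your proposal takes essentially the same route as the paper: the paper proves Theorem~\ref{thm1} by running the Grillakis--Shatah--Strauss scheme of Theorem~0 from \cite{CCO1,CCO2}, the only new input being that for $p=2$ the infimum in \eqref{gamma*} is attained at $v=\varphi_{\omega}$ and equals $1$, and your blocks $(M_+,L_+,M_-,L_-)$ coincide with the paper's $\mathcal{L}_R=\mathrm{diag}(L_{\gamma},L_2)$, $\mathcal{L}_I=\mathrm{diag}(L_{-\gamma},L_1)$ at $(\alpha,\beta)=(0,1)$. One sentence of yours is false as written even though your formulas are right: the coupling term does \emph{not} contribute only at cubic order in the perturbation --- it produces a quadratic term proportional to $\int_{\R^N}\varphi_{\omega}\,\Re(w_1^2)\,dx$, which is exactly the source of the $\mp\gamma\varphi_{\omega}$ in $M_{\pm}$ and hence of the entire $\gamma$-dependence; what is true, and what gives block-diagonality, is that no $w_1$--$w_2$ cross terms appear because the background first component vanishes. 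For the instability half, note also that when $\gamma$ is large $M_+=L_{\gamma}$ may have several negative eigenvalues, so a single-negative-direction criterion in the style of Proposition~\ref{prop3} does not apply verbatim for all $\gamma>1$; your growing-mode argument via $M_-^{1/2}M_+M_-^{1/2}$ is the robust route there, provided you supply the standard passage from linear to orbital instability.
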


We remark that the stability property of the semi-trivial standing wave 
of \eqref{eq:co} is independent of $\kappa$ for the case $\gamma\ne 1$. 
On the other hand, we will see that the sign of $\kappa$ plays an important role 
for the case $\gamma=1$ (see Theorems \ref{thm4} and \ref{thm5} below). 

\subsection{Notation and Definitions}

Before we state our main results, we prepare some notation and definitions. 
For a complex number $z$, we denote by $\Re z$ and $\Im z$ its real and imaginary parts. 
Thoughout this paper, we assume that $N\le 3$. 
We regard $L^2(\R^N,\C)$ as a real Hilbert space with the inner product 
$$(u,v)_{L^2}=\Re \int_{\R^N}u(x)\overline{v(x)}\,dx,$$
and we define the inner products of real Hilbert spaces 
$H=L^2(\R^N,\C)^2$ and $X=H^1(\R^N,\C)^2$ by 
$$(\vec u,\vec v)_{H}=(u_1,v_1)_{L^2}+(u_2,v_2)_{L^2}, \quad 
(\vec u,\vec v)_{X}=(\vec u,\vec v)_{H}+(\nabla \vec u,\nabla \vec v)_{H}.$$
Here and hereafter, we use the vectorial notation $\vec u=(u_1,u_2)$, 
and it is considered to be a column vector. 

The energy $E$ and the charge $Q$ are defined by 
\begin{align*}
&E(\vec u)
=\frac{1}{2}\|\nabla \vec u\|_{H}^2
-\frac{\kappa}{3}\|u_1\|_{L^3}^3 -\frac{1}{3}\|u_2\|_{L^3}^3
-\frac{\gamma}{2} \Re \int_{\R^N}u_1^2\overline{u_2}\,dx, \\
&Q(\vec u)=\frac{1}{2}\|\vec u\|_{H}^2, \quad \vec u\in X. 
\end{align*}
For $\theta \in \R$, 
we define $G(\theta)$ and $J$ by 
$$G(\theta)\vec u=(e^{i\theta}u_1,e^{2i\theta}u_2), \quad 
J\vec u=(iu_1,2iu_2), \quad \vec u\in X,$$ 
and 
$$\dual{G(\theta)\vec f}{\vec u}=\dual{\vec f}{G(-\theta)\vec u}, \quad  
\dual{J\vec f}{\vec u}=-\dual{\vec f}{J\vec u}$$ 
for $\vec f\in X^*$ and $\vec u\in X$, where $X^*$ is the dual space of $X$. 
For $y\in \R^N$, we define 
$$\tau_y \vec u(x)=\vec u(x-y), \quad \vec u\in X,~ x\in \R^N.$$
Note that \eqref{eq:co} is written as 
$$\partial_t \vec u(t)=-JE'(\vec u(t)),$$ 
and that $E(G(\theta)\tau_y\vec u)=E(\vec u)$ 
for all $\theta\in \R$, $y\in \R^N$ and $\vec u\in X$. 

By the standard theory (see, e.g., \cite[Chapter 4]{caz}), 
we see that the Cauchy problem for \eqref{eq:co} 
is globally well-posed in $X$, 
and the energy and the charge are conserved. 
For $\omega>0$, we define the action $S_{\omega}$ by 
$$S_{\omega}(\vec v)=E(\vec v)+\omega Q(\vec v), \quad \vec v\in X.$$
Note that the Euler-Lagrange equation $S_{\omega}'(\vec \phi)=0$ is written as 
\begin{equation}\label{sp}
\left\{\begin{array}{l}
-\Delta \phi_1+\omega \phi_1=\kappa |\phi_1|\phi_1+\gamma \overline{\phi_1}\phi_2 \\
-\Delta \phi_2+\omega \phi_2=|\phi_2|\phi_2+({\gamma}/{2}) \phi_1^2
\end{array}\right.
\end{equation}
and that if $\vec \phi\in X$ satisfies $S_{\omega}'(\vec \phi)=0$, 
then $G(\omega t)\vec \phi$ is a solution of \eqref{eq:co}. 

\begin{definition}
We say that a standing wave solution $G(\omega t)\vec \phi$ of \eqref{eq:co}
is {\it stable} if for all $\varepsilon>0$ there exists $\delta>0$ 
with the following property. 
If $u_0\in X$ satisfies $\|\vec u_0-\vec \phi\|_X<\delta$, 
then the solution $\vec u(t)$ of \eqref{eq:co} with $\vec u(0)=\vec u_0$ 
exists for all $t\ge 0$, and satisfies 
$$\inf_{\theta\in \R,y\in \R^N}
\|\vec u(t)-G(\theta)\tau_y\vec \phi\|_X<\varepsilon$$ 
for all $t\ge 0$. 
Otherwise, $G(\omega t)\vec \phi$ is called {\it unstable}. 
\end{definition}

In this article, we are also interested in the classification of ground states of \eqref{sp}. 
A ground state of \eqref{sp} is a nontrivial solution which minimizes the action $S_\omega$ 
among all the nontrivial solutions of \eqref{sp}. 
The set $\mathcal{G}_{\omega}$ of the ground states for \eqref{sp} is then defined as follows:
\begin{align*}
&\mathcal{A}_{\omega}=\{\vec v\in X:S_{\omega}'(\vec v)=0,~ \vec v\ne 0\}, \\
&d(\omega)=\inf\{S_{\omega}(\vec v):\vec v\in \mathcal{A}_{\omega}\}, \\
&\mathcal{G}_{\omega}=\{\vec u\in \mathcal{A}_{\omega}:S_{\omega}(\vec u)=d(\omega)\}.
\end{align*}

\subsection{Main Results}

We first look for solutions of \eqref{sp} of the form 
$\vec \phi=(\alpha \varphi_{\omega},\beta \varphi_{\omega})$ 
with $(\alpha,\beta)\in ]0,\infty[^2$, 
where $\varphi_{\omega}$ is the positive radial solution of \eqref{scalar}. 
It is clear that if $(\alpha,\beta)\in ]0,\infty[^2$ satisfies 
\begin{equation}\label{eq:ab}
\kappa \alpha+\gamma \beta=1, \quad \gamma \alpha^2+2\beta^2=2\beta,
\end{equation}
then $(\alpha \varphi_{\omega},\beta \varphi_{\omega})$ is a solution of \eqref{sp}. 
For $\kappa\in \R$ and $\gamma>0$, we define 
$$\mathcal{S}_{\kappa,\gamma}
=\{(x,y)\in ]0,\infty[^2:
\kappa x+\gamma y=1, ~ \gamma x^2+2y^2=2y\}.$$
Note that $\gamma x^2+2y^2=2y$ is an ellipse with vertices $(x,y)=(0,0)$, 
$(0,1)$, $(\pm 1/\sqrt{2\gamma},1/2)$, 
and that $\mathcal{S}_{\kappa,\gamma}\subset \{(x,y):0<y<1\}$. 

To determine the structure of the set $\mathcal{S}_{\kappa,\gamma}$, 
which is one of the crucial points of our analysis, 
for $\kappa^2\geq 2\gamma(1-\gamma)$ we define 
\begin{align*}
&\alpha_{\pm}=\frac{(2-\gamma)\kappa\pm \gamma \sqrt{\kappa^2+2\gamma (\gamma-1)}}
{2\kappa^2+\gamma^3}, \\
&\beta_{\pm}=\frac{\kappa^2+\gamma^2 \pm \kappa \sqrt{\kappa^2+2\gamma (\gamma-1)}}
{2\kappa^2+\gamma^3}, \\
&\alpha_{0}=\frac{(2-\gamma)\kappa}{2\kappa^2+\gamma^3}, \quad 
\beta_{0}=\frac{\kappa^2+\gamma^2}{2\kappa^2+\gamma^3}. 
\end{align*}
We also divide the parameter domain 
$\mathcal{D}=\{(\kappa,\gamma):\kappa\in \R,~ \gamma>0\}$ into the following sets 
(see Figure \ref{figure:J}). 
\begin{align*}
&\mathcal{J}_1=\{(\kappa,\gamma):\kappa\le 0,~ \gamma>1\} \cup 
\{(\kappa,\gamma):\kappa>0,~ \gamma\ge 1\}, \\
&\mathcal{J}_2=\{(\kappa,\gamma):
0<\gamma<1,~ \kappa>\sqrt{2\gamma (1-\gamma)}\}, \\
&\mathcal{J}_3=\{(\kappa,\gamma):
0<\gamma<1,~ \kappa=\sqrt{2\gamma (1-\gamma)}\}, \\
&\mathcal{J}_0=\{(\kappa,\gamma):\kappa\in \R,~ \gamma>0\}
\setminus (\mathcal{J}_1\cup \mathcal{J}_2\cup \mathcal{J}_3). 
\end{align*}

\begin{figure}[htbp]
\begin{center}
\scalebox{0.4}{\input{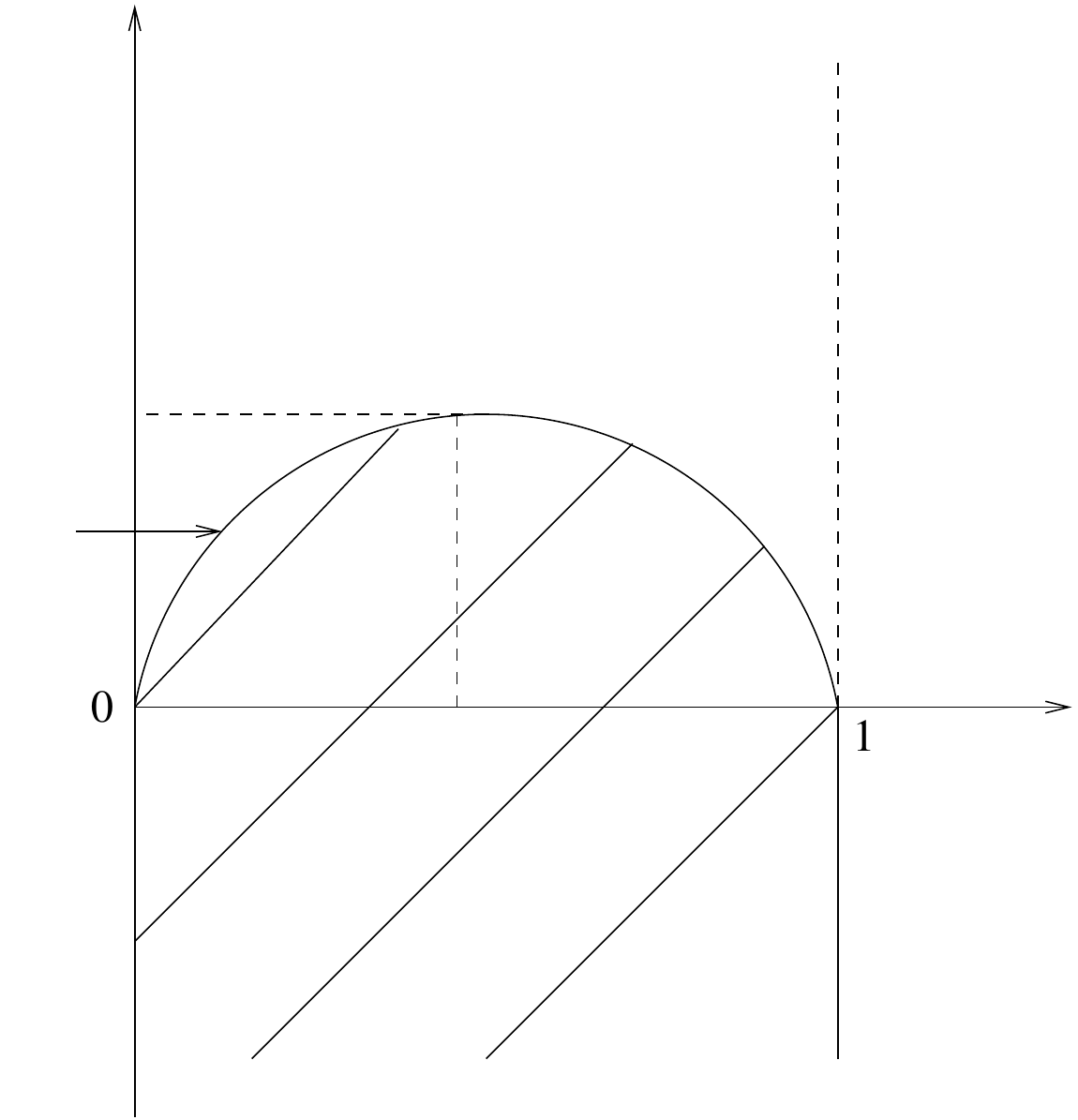_t} }
\caption{The sets $\mathcal{J}_0$, $\mathcal{J}_1$, 
$\mathcal{J}_2$ and $\mathcal{J}_3$}
\label{figure:J}
\end{center}
\end{figure}

Notice that the sets $\mathcal{J}_0$, $\mathcal{J}_1$, 
$\mathcal{J}_2$ and $\mathcal{J}_3$ are mutually disjoint, 
and $\mathcal{D}=\mathcal{J}_0\cup \mathcal{J}_1\cup \mathcal{J}_2\cup \mathcal{J}_3$. 
Note also that for $0<\kappa\le 1/\sqrt{2}$, the equation $2\gamma (1-\gamma)=\kappa^2$ 
has solutions $\gamma=\gamma_{\pm}:=(1\pm \sqrt{1-2\kappa^2})/2$. 
It is then possible to determine the set $\mathcal{S}_{\kappa,\gamma}$ 
in terms of $\alpha_{\pm}$, $\beta_{\pm}$, $\alpha_0$ and $\beta_0$.
Indeed, by elementary computations, we obtain the following. 

\begin{proposition}\label{prop1}
$({\rm 0})$ \hspace{1mm}
If $(\kappa,\gamma)\in \mathcal{J}_0$, then 
$\mathcal{S}_{\kappa,\gamma}$ is empty. 
\par \noindent $({\rm 1})$ \hspace{1mm}
If $(\kappa,\gamma)\in \mathcal{J}_1$, then 
$\mathcal{S}_{\kappa,\gamma}=\{(\alpha_{+},\beta_{-})\}$. 
\par \noindent $({\rm 2})$ \hspace{1mm}
If $(\kappa,\gamma)\in \mathcal{J}_2$, then 
$\mathcal{S}_{\kappa,\gamma}=\{(\alpha_{+},\beta_{-}),~ (\alpha_{-},\beta_{+})\}$. 
\par \noindent $({\rm 3})$ \hspace{1mm}
If $(\kappa,\gamma)\in \mathcal{J}_3$, then 
$\mathcal{S}_{\kappa,\gamma}=\{(\alpha_{0},\beta_{0})\}$. 
\end{proposition}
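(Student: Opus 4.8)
The plan is to solve the system \eqref{eq:ab} directly by eliminating one variable, and then to sort out for which parameters the resulting solutions lie in the open first quadrant. From the linear equation $\kappa x + \gamma y = 1$ we get $\gamma y = 1 - \kappa x$, hence $y = (1-\kappa x)/\gamma$ (note $\gamma > 0$). Substituting into the ellipse equation $\gamma x^2 + 2y^2 = 2y$, i.e. $\gamma x^2 + 2y(y-1) = 0$, and multiplying through by $\gamma^2$, I would obtain a quadratic in $x$: expanding $2(1-\kappa x)(1-\kappa x - \gamma)$ and adding $\gamma^3 x^2$ gives $(2\kappa^2 + \gamma^3)x^2 - 2\kappa(2-\gamma)x + 2(1-\gamma) = 0$. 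The discriminant of this quadratic is, up to the positive factor $4$, equal to $\kappa^2(2-\gamma)^2 - 2(1-\gamma)(2\kappa^2+\gamma^3)$, and a short computation collapses this to $\gamma^2\bigl(\kappa^2 + 2\gamma(\gamma-1)\bigr)$. This already explains the dichotomy governed by the sign of $\kappa^2 + 2\gamma(\gamma-1)$, i.e.\ of $\kappa^2 - 2\gamma(1-\gamma)$, which is exactly the quantity distinguishing $\mathcal{J}_0,\mathcal{J}_2,\mathcal{J}_3$ from one another, and which is automatically positive on $\mathcal{J}_1$ (where $\gamma \ge 1$). Solving the quadratic then yields $x = \alpha_\pm$ in case $\kappa^2 + 2\gamma(\gamma-1) > 0$, $x = \alpha_0$ in the degenerate case $\kappa^2 + 2\gamma(\gamma-1) = 0$, and no real root when the discriminant is negative; and back-substituting $y = (1-\kappa x)/\gamma$ gives, after simplification, $y = \beta_\mp$ paired with $x = \alpha_\pm$ (the index flip is the content of the pairing claim), and $y = \beta_0$ with $x = \alpha_0$. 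This pairing check — verifying $(1 - \kappa\alpha_+)/\gamma = \beta_-$ and $(1-\kappa\alpha_-)/\gamma = \beta_+$ — is a direct algebraic identity once one clears denominators and uses $\alpha_\pm + \alpha_\mp$-type symmetric functions; I would present it as an elementary computation.

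The remaining and slightly more delicate work is the sign analysis: deciding, in each case, whether the candidate pairs actually have \emph{both} coordinates strictly positive, since $\mathcal{S}_{\kappa,\gamma}$ lives in $]0,\infty[^2$. Here I would argue geometrically using the picture already set up in the paper: the line $\kappa x + \gamma y = 1$ and the ellipse $\gamma x^2 + 2y^2 = 2y$, which passes through $(0,0)$ and $(0,1)$ with rightmost vertex $(1/\sqrt{2\gamma}, 1/2)$, and which is confined to the strip $0 \le y \le 1$. The line meets the $y$-axis at $y = 1/\gamma$: if $\gamma > 1$ this is below the top of the ellipse, if $\gamma < 1$ it is above. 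Counting intersections of a line with an ellipse, together with the observation that any intersection in the first quadrant automatically satisfies $0 < y < 1$ (by the strip confinement, with the endpoints $y=0,1$ excluded because $x=0$ there forces $y\in\{0,1\}$ while the line through $(0,0)$ or $(0,1)$ would need $\kappa\cdot 0 + \gamma y = 1$, impossible unless $\gamma=1$ and $y=1$, a case one checks separately), reduces everything to: (i) on $\mathcal{J}_1$ ($\gamma \ge 1$, and $\kappa \le 0$ excluded only when $\gamma = 1$... actually $\kappa\le 0,\gamma>1$ or $\kappa>0,\gamma\ge1$) the line enters the ellipse's first-quadrant arc exactly once, giving the single point $(\alpha_+,\beta_-)$; (ii) on $\mathcal{J}_2$ ($0<\gamma<1$, $\kappa$ large enough) it cuts that arc twice, giving $(\alpha_+,\beta_-)$ and $(\alpha_-,\beta_+)$; (iii) on $\mathcal{J}_3$ it is tangent, giving the double point $(\alpha_0,\beta_0)$; (iv) on $\mathcal{J}_0$ there is no first-quadrant intersection. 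To make (i)–(iv) rigorous I would supplement the picture with explicit sign checks on the formulas: positivity of $2\kappa^2+\gamma^3$ is free; on $\mathcal{J}_1$ with $\kappa > 0$ one sees $\alpha_+ > 0$ from the numerator $(2-\gamma)\kappa + \gamma\sqrt{\cdots}$ (handling $\gamma \ge 2$ by noting $\gamma\sqrt{\kappa^2+2\gamma(\gamma-1)} \ge \gamma\kappa > (\gamma-2)\kappa$), while $\alpha_-$ has the opposite sign there, which is why only $(\alpha_+,\beta_-)$ survives; the case $\kappa \le 0,\ \gamma > 1$ is symmetric with the roles of the numerator signs reversed but the outcome is again a single admissible point after rechecking which of $\alpha_\pm,\beta_\mp$ are positive. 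I expect the main obstacle to be exactly this bookkeeping of signs across the various sub-regions (in particular the interplay between the sign of $\kappa$, the size of $\gamma$ relative to $1$ and to $2$, and the sign of $2-\gamma$): it is entirely elementary but must be organized carefully so that no admissible pair is dropped and no inadmissible pair is kept. Everything else is a mechanical quadratic-formula computation, and I would simply state that "by elementary computations" the pairs reduce to the asserted forms, deferring the sign discussion to a case-by-case check keyed to Figure \ref{figure:J}.
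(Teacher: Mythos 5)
Your proposal is correct and follows exactly the route the paper intends: the paper gives no proof beyond the phrase ``by elementary computations,'' and your elimination of $y$, the quadratic $(2\kappa^2+\gamma^3)x^2-2\kappa(2-\gamma)x+2(1-\gamma)=0$, the discriminant identity $\gamma^2\bigl(\kappa^2+2\gamma(\gamma-1)\bigr)$, and the pairing $y=\beta_{\mp}$ with $x=\alpha_{\pm}$ all check out. For the sign bookkeeping you defer, the cleanest organization is via the product and sum of the roots, $\alpha_{+}\alpha_{-}=2(1-\gamma)/(2\kappa^2+\gamma^3)$ and $\alpha_{+}+\alpha_{-}=2\kappa(2-\gamma)/(2\kappa^2+\gamma^3)$, which immediately give one positive root on $\mathcal{J}_1$, two on $\mathcal{J}_2$, and none on $\mathcal{J}_0$.
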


\begin{remark}
\par \noindent $({\rm 1})$ \hspace{1mm}
When $\kappa\le 0$, $(\alpha_{+},\beta_{-})\to (0,1)$ as $\gamma\to 1+0$. 
That is, the branch 
$\{(\alpha_{+}\varphi_{\omega},\beta_{-}\varphi_{\omega}):\gamma>1\}$ 
of positive solutions of \eqref{sp} bifurcates from the semi-trivial solution 
$(0,\varphi_{\omega})$ at $\gamma=1$. 
\par \noindent $({\rm 2})$ \hspace{1mm}
When $\kappa>0$, $(\alpha_{-},\beta_{+})\to (0,1)$ as $\gamma\to 1-0$. 
That is, the branch 
$\{(\alpha_{-}\varphi_{\omega},\beta_{+}\varphi_{\omega}):\gamma_m<\gamma<1\}$ 
of positive solutions of \eqref{sp} bifurcates from the semi-trivial solution 
$(0,\varphi_{\omega})$ at $\gamma=1$, where 
$\gamma_m=\inf\{\gamma:(\kappa,\gamma)\in \mathcal{S}_{\kappa,\gamma}\}$, 
and it is given by $\gamma_m=0$ if $\kappa>1/\sqrt{2}$, 
and $\gamma_m=\gamma_{+}$ if $0<\kappa\le 1/\sqrt{2}$. 
\end{remark}

We obtain the following stability and instability results 
of standing waves of \eqref{eq:co} associated with Proposition \ref{prop1}. 
Recall that $\varphi_{\omega}$ is the positive radial solution of \eqref{scalar}. 

\begin{theorem}\label{thm2}
Let $N\le 3$ and $(\kappa,\gamma)\in \mathcal{J}_1\cup \mathcal{J}_2$. 
For any $\omega>0$, the standing wave solution 
$G(\omega t)(\alpha_{+}\varphi_{\omega},\beta_{-}\varphi_{\omega})$ 
of \eqref{eq:co} is stable.
\end{theorem}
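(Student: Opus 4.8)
The plan is to prove stability of $G(\omega t)\vec\phi_{+}$ with $\vec\phi_{+}=(\alpha_{+}\varphi_{\omega},\beta_{-}\varphi_{\omega})$ by showing that $\vec\phi_{+}$ is a local minimizer of the action $S_{\omega}$ on the natural constraint manifold, and then invoking the standard Cazenave--Lions / Grillakis--Shatah--Strauss type concentration--compactness argument adapted to the two symmetries $G(\theta)$ and $\tau_y$. Concretely, I would first record that $\vec\phi_{+}$ solves \eqref{sp}, so $S_{\omega}'(\vec\phi_{+})=0$, and compute the bilinear form $S_{\omega}''(\vec\phi_{+})$. Since all components are real multiples of the fixed scalar ground state $\varphi_{\omega}$, the Hessian decouples: on the imaginary directions it is nonnegative with kernel spanned by $J\vec\phi_{+}$ (the phase direction), and on the real directions it splits further into the radial-type piece controlled by the scalar operator $L_{+}=-\Delta+\omega-2|\varphi_{\omega}|$ (whose single negative eigenvalue is well understood) plus the translation directions $\partial_{x_j}\vec\phi_{+}$ which lie in the kernel.

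The key step is the coercivity estimate: there is $c>0$ such that
\[
S_{\omega}''(\vec\phi_{+})[\vec w,\vec w]\ \ge\ c\|\vec w\|_X^2
\]
for all $\vec w\in X$ orthogonal (in $H$) to the finite-dimensional space spanned by $J\vec\phi_{+}$, $\partial_{x_1}\vec\phi_{+},\dots,\partial_{x_N}\vec\phi_{+}$, and to $\vec\phi_{+}$ itself (the charge direction). The single negative direction of $S_{\omega}''$ must be killed by the charge constraint $Q(\vec u)=Q(\vec\phi_{+})$; this is exactly the point where one needs to know the sign of $\frac{d}{d\omega}Q(\vec\phi_{\omega,+})$, or equivalently that the scalar quantity associated with the $2$-homogeneous nonlinearity (here $p=2$, which is $L^2$-subcritical in dimensions $N\le 3$) has the favorable sign. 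Because $\vec\phi_{+}=(\alpha_{+},\beta_{-})\varphi_{\omega}$ with $(\alpha_{+},\beta_{-})$ independent of $\omega$ and $\varphi_{\omega}(x)=\omega\,\varphi_{1}(\sqrt{\omega}x)$, one gets $Q(\vec\phi_{\omega,+})=(\alpha_{+}^2+\beta_{-}^2)\,\omega^{2-N/2}\|\varphi_1\|_{L^2}^2/2$, whose derivative in $\omega$ is strictly positive for $N\le 3$; this is the slope condition that guarantees the constrained Hessian is positive definite.

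From the coercivity of the constrained Hessian, stability follows by the usual argument: if $\vec u(t)$ starts close to the orbit, conservation of $E$ and $Q$ together with a Taylor expansion of $S_{\omega}$ around $\vec\phi_{+}$ force $\operatorname{dist}_X(\vec u(t),\mathcal O(\vec\phi_{+}))$ to stay small, after modding out the two symmetry directions via an implicit-function-theorem choice of modulation parameters $(\theta(t),y(t))$. Global existence for $t\ge0$ is not an issue since $p=2<1+4/N$ makes the Cauchy problem globally well-posed in $X$ (already noted in the excerpt). The main obstacle I anticipate is the spectral bookkeeping of $S_{\omega}''(\vec\phi_{+})$: one must verify that, after the real/imaginary and component decomposition, the operator really has exactly one negative eigenvalue and that its kernel is precisely $\operatorname{span}\{J\vec\phi_{+},\partial_{x_1}\vec\phi_{+},\dots,\partial_{x_N}\vec\phi_{+}\}$ with no extra zero modes coming from the coupling term $-\gamma\Re\int u_1^2\overline{u_2}$; handling this coupling — in particular checking that the relevant $2\times2$ matrix built from $(\alpha_{+},\beta_{-})$ and the parameters $(\kappa,\gamma)$ is positive definite exactly on $\mathcal J_1\cup\mathcal J_2$ (and, by Proposition \ref{prop1}, that $(\alpha_+,\beta_-)$ is the branch on which this holds, as opposed to $(\alpha_-,\beta_+)$) — is the computational heart of the argument.
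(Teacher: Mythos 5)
Your overall strategy --- coercivity of $S_{\omega}''(\vec\phi_{+})$ on the complement of the charge and symmetry directions, followed by the Grillakis--Shatah--Strauss machinery --- is exactly the route the paper takes (Proposition \ref{prop2}). But the step you defer as ``the main obstacle I anticipate'' and ``the computational heart of the argument'' is precisely where all the content of the theorem lies, so as written the proposal has a genuine gap. The paper closes it as follows: since $\Re\vec u$ and $\Im\vec u$ decouple in the Hessian (identity \eqref{slri}), one obtains two real $2\times 2$ matrix operators $\mathcal{L}_R$ and $\mathcal{L}_I$, and Lemma \ref{diag} diagonalizes them by the constant orthogonal matrices built from $(\alpha,\beta)$ and $(\alpha,2\beta)$, reducing everything to the scalar operators $L_2$, $L_{(2-\gamma)\beta}$, $L_1$ and $L_{(1-2\gamma)\beta}$. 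The theorem then rests on the elementary inequalities $(2-\gamma)\beta_{-}<1$ and $(1-2\gamma)\beta_{-}<1$ for $(\kappa,\gamma)\in\mathcal{J}_1\cup\mathcal{J}_2$ (Lemma \ref{lem:gb1}), which make the second diagonal blocks strictly positive and leave $L_2$ as the only source of a negative direction, killed by orthogonality to $\Phi$ and $\nabla\Phi$ (Lemmas \ref{Lr1} and \ref{Li1}). Your ``relevant $2\times 2$ matrix'' is exactly this, and it is not a routine verification: on the other branch one has $(2-\gamma)\beta_{+}>1$, which produces a second negative direction and instability (Theorem \ref{thm3}).

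A related point shows why the gap cannot be closed by the slope condition you invoke. Your computation $Q(\vec\phi_{\omega})=(\alpha^2+\beta^2)\,\omega^{2-N/2}\|\varphi_1\|_{L^2}^2/2$ is correct, but it gives $\frac{d}{d\omega}Q>0$ for \emph{every} constant pair $(\alpha,\beta)$, in particular for both branches $(\alpha_{+},\beta_{-})$ and $(\alpha_{-},\beta_{+})$. Since one of these is stable and the other unstable, positivity of the slope cannot by itself ``guarantee the constrained Hessian is positive definite''; in the GSS framework it must be matched against the count of negative eigenvalues of $S_{\omega}''(\vec\phi)$, and establishing that this count equals one on $\mathcal{J}_1\cup\mathcal{J}_2$ for the $(\alpha_{+},\beta_{-})$ branch is exactly the missing step. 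A smaller inaccuracy: the real part of the Hessian is not controlled by the single scalar operator $-\Delta+\omega-2\varphi_{\omega}$ plus translations; it is a genuinely coupled $2\times 2$ system whose second eigenvalue branch $L_{(2-\gamma)\beta}$ is where the parameters $\kappa$ and $\gamma$ actually enter.
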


\begin{theorem}\label{thm3}
Let $N\le 3$ and $(\kappa,\gamma)\in \mathcal{J}_2$. 
For any $\omega>0$, the standing wave solution 
$G(\omega t)(\alpha_{-}\varphi_{\omega},\beta_{+}\varphi_{\omega})$ 
of \eqref{eq:co} is unstable.
\end{theorem}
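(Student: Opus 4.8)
The plan is to apply the instability part of the Grillakis--Shatah--Strauss theory to the $C^1$ branch $\omega\mapsto\vec\phi_\omega:=(\alpha_-\varphi_\omega,\beta_+\varphi_\omega)$ of solutions of \eqref{sp}. I would show that the linearized action $L_\omega:=S_\omega''(\vec\phi_\omega)$ has exactly two negative eigenvalues and no kernel beyond the one generated by the symmetries, while the function $d(\omega):=S_\omega(\vec\phi_\omega)$ satisfies $d'(\omega)=Q(\vec\phi_\omega)$ and $d''(\omega)>0$; then $n(L_\omega)-1=1$ is odd, and the abstract instability theorem yields the result. Here $n(L)$ denotes the number of negative eigenvalues of a self-adjoint operator $L$. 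I would use the same spectral framework that underlies Theorem \ref{thm2}; only the Morse index changes.

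First I would unravel the structure of $L_\omega$. Since $\vec\phi_\omega$ is real-valued, $L_\omega$ respects the splitting $\vec\psi=\vec\xi+i\vec\eta$ of $X$ into real and imaginary parts and becomes $L_\omega=L^R\oplus L^I$. A direct computation of the second variation, simplified with the relations $\kappa\alpha_-+\gamma\beta_+=1$ and $\gamma\alpha_-^2+2\beta_+^2=2\beta_+$ coming from \eqref{eq:ab}, gives matrix Schr\"odinger operators
\[
L^R=-\Delta\,I_2+\omega\,I_2-\varphi_\omega M^R,\qquad
L^I=-\Delta\,I_2+\omega\,I_2-\varphi_\omega M^I,
\]
where $I_2$ is the $2\times2$ identity and
\[
M^R=\begin{pmatrix}1+\kappa\alpha_- & \gamma\alpha_-\\ \gamma\alpha_- & 2\beta_+\end{pmatrix},\qquad
M^I=\begin{pmatrix}\kappa\alpha_--\gamma\beta_+ & \gamma\alpha_-\\ \gamma\alpha_- & \beta_+\end{pmatrix}.
\]
Crucially $M^R$ and $M^I$ are \emph{constant} symmetric matrices, so a constant orthogonal change of the target variable diagonalizes each of them and turns $L^R$ and $L^I$ into direct sums of two scalar Schr\"odinger operators $\ell_\mu:=-\Delta+\omega-\mu\varphi_\omega$, one for each eigenvalue $\mu$ of the matrix.

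The heart of the argument is the identification of those eigenvalues. Using \eqref{eq:ab} once more, $(\alpha_-,\beta_+)$ turns out to be an eigenvector of $M^R$ for the eigenvalue $2$, so the eigenvalues of $M^R$ are $2$ and $\beta_+(2-\gamma)$; similarly $(\alpha_-,2\beta_+)$ is an eigenvector of $M^I$ for the eigenvalue $1$, so the eigenvalues of $M^I$ are $1$ and $\beta_+(1-2\gamma)$. I would then invoke the classical spectral facts for the scalar operator associated with \eqref{scalar}: since $\ell_1\varphi_\omega=0$ with $\varphi_\omega>0$, one has $\ell_\mu\ge0$ for $\mu\le1$; $n(\ell_\mu)=1$ with trivial kernel for $1<\mu<2$; and $\ell_2$ has $n(\ell_2)=1$ with $\ker\ell_2=\mathrm{span}\{\partial_{x_1}\varphi_\omega,\dots,\partial_{x_N}\varphi_\omega\}$. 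The elementary but decisive computation --- to be carried out from the explicit formula for $\beta_+$ and the definition of $\mathcal{J}_2$ --- is that $(\kappa,\gamma)\in\mathcal{J}_2$ forces
\[
1<\beta_+(2-\gamma)<2\qquad\text{and}\qquad\beta_+(1-2\gamma)<1,
\]
the upper bound $\beta_+(2-\gamma)<2$ being just $\beta_+<1$, and the lower bound $1<\beta_+(2-\gamma)$ reducing, after clearing denominators, to $(1-\gamma)(2\kappa^2+\gamma^3)>0$. Consequently $n(L^R)=n(\ell_2)+n(\ell_{\beta_+(2-\gamma)})=1+1=2$ and $n(L^I)=n(\ell_1)+n(\ell_{\beta_+(1-2\gamma)})=0$, so $L_\omega$ has exactly two negative eigenvalues. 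Tracking the kernels (here one uses $\beta_+(2-\gamma)\notin\{1,2\}$, valid on the \emph{open} set $\mathcal{J}_2$, to exclude an extra kernel element) gives $\ker L_\omega=\mathrm{span}\{J\vec\phi_\omega,\partial_{x_1}\vec\phi_\omega,\dots,\partial_{x_N}\vec\phi_\omega\}$, precisely the tangent space of the orbit of $\vec\phi_\omega$ under $G(\theta)$ and translations.

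Finally I would compute the slope by scaling: since $\alpha_-,\beta_+$ are independent of $\omega$ while $\varphi_\omega(x)=\omega\,\varphi_1(\sqrt\omega\,x)$, one has $Q(\vec\phi_\omega)=\tfrac12(\alpha_-^2+\beta_+^2)\|\varphi_1\|_{L^2}^2\,\omega^{2-N/2}$, hence $d''(\omega)=\tfrac12\bigl(2-\tfrac N2\bigr)(\alpha_-^2+\beta_+^2)\|\varphi_1\|_{L^2}^2\,\omega^{1-N/2}>0$ for every $N\le3$. With the $C^1$ branch, the spectral picture $n(L_\omega)=2$ with the correct kernel, and $d''(\omega)>0$, the Grillakis--Shatah--Strauss instability criterion applies and gives that $G(\omega t)(\alpha_-\varphi_\omega,\beta_+\varphi_\omega)$ is unstable (the translation symmetries being handled in the usual way, contributing to $\ker L_\omega$ but not to the parity count for a wave at rest). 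I expect the main obstacle to be twofold: citing (or re-deriving) the version of the abstract instability theorem adapted to the symmetry group $\R\times\R^N$ and verifying its hypotheses --- in particular that $0$ is isolated in the spectrum of $L_\omega$ with exactly the symmetry-generated kernel --- and the purely algebraic verification that $1<\beta_+(2-\gamma)<2$ precisely on $\mathcal{J}_2$; it is the latter that distinguishes the unstable solution $(\alpha_-\varphi_\omega,\beta_+\varphi_\omega)$ from the stable one $(\alpha_+\varphi_\omega,\beta_-\varphi_\omega)$ of Theorem \ref{thm2}, for which $\beta_-(2-\gamma)<1$.
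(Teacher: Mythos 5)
Your spectral analysis coincides with the paper's own: the constant orthogonal change of variables diagonalizing the real and imaginary blocks is exactly Lemma \ref{diag}, the resulting scalar operators $L_2$, $L_{(2-\gamma)\beta_+}$, $L_1$, $L_{(1-2\gamma)\beta_+}$ are the same (your matrices $M^R$, $M^I$ are the correct ones; in \eqref{slr}--\eqref{sli} the $(1,1)$ entries should read $2\kappa\alpha+\gamma\beta$ and $\kappa\alpha-\gamma\beta$, but Lemma \ref{diag} is stated for the correct operators), and your decisive inequalities $1<(2-\gamma)\beta_+<2$ and $(1-2\gamma)\beta_+<1$ are precisely Lemma \ref{lem:gb2}, proved by the same algebra ($(2-\gamma)\kappa>\gamma\sqrt{D}$). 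Where you diverge is the abstract instability step. You invoke the Grillakis--Shatah--Strauss parity criterion ``$n(L_\omega)-p(d'')$ odd''; the paper instead verifies the hypotheses of Proposition \ref{prop3} (Theorem 1 of \cite{oht}): it exhibits the single surviving negative direction explicitly, $\vec\psi=\Phi_1/\|\Phi_1\|_{H}$ with $\Phi_1=(-\beta_+\varphi_\omega,\alpha_-\varphi_\omega)$, which is orthogonal to $\Phi$ and $J\Phi$ and satisfies $\dual{S_\omega''(\Phi)\vec\psi}{\vec\psi}=\dual{L_{(2-\gamma)\beta_+}\varphi_\omega}{\varphi_\omega}/\|\varphi_\omega\|_{L^2}^2<0$, and proves coercivity on the orthogonal complement of $\{\Phi,J\Phi,\vec\psi\}$ in $X_{\rm rad}$ via Lemmas \ref{Lr2} and \ref{Li1}. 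The two routes encode the same Morse-index information (orthogonality to $\Phi$ together with $d''>0$ absorbs the negative direction of $L_2$; $\vec\psi$ carries the second one), but the paper's choice buys two things you would still have to supply: it works entirely in the radial class, where translations act trivially, so one never has to form the full $(N+1)\times(N+1)$ Hessian $d''(\omega,c)$ or produce a family of travelling bound states for a system whose second component carries $-2\Delta$ and hence admits no obvious Galilean boost; and it is a direct Lyapunov-functional argument, so it bypasses the passage from linear to nonlinear instability required by the parity-criterion version of the theory, which is not contained in \cite{GSS1} (that reference only covers the case $n(L)=1$). You correctly identify this as the main obstacle; it is a real one, and closing it amounts either to proving Proposition \ref{prop3} as the paper does, or to carefully adapting the second Grillakis--Shatah--Strauss instability theorem to this radial, two-negative-eigenvalue setting. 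Everything else in your plan, including the $d''(\omega)>0$ scaling computation and the kernel identification, is correct and matches the paper.
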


\begin{remark}
In this paper, we do not study the stability/instability problem 
of $G(\omega t)(\alpha_{0}\varphi_{\omega},\beta_{0}\varphi_{\omega})$ 
for the case $(\kappa,\gamma)\in \mathcal{J}_3$. 
\end{remark}

\begin{remark}
The result for the case $\kappa=1$ in Theorem \ref{thm3} is announced in \cite{oht} 
together with an outline of the proof. 
\end{remark}

We also obtain the stability and instability results 
of semi-tirivial standing wave at the bifurcation point $\gamma=1$. 
The results depend on the sign of $\kappa$. 

\begin{theorem}\label{thm4}
Let $N\le 3$, $\kappa>0$ and $\gamma=1$. For any $\omega>0$, 
the standing wave solution $(0,e^{2i\omega t}\varphi_{\omega})$ 
of \eqref{eq:co} is unstable.
\end{theorem}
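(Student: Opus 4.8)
The plan is to prove instability by exhibiting a direction along which the semi-trivial standing wave $(0,e^{2i\omega t}\varphi_\omega)$ fails to be even a constrained minimizer of the action; concretely, I would look at perturbations of the form $(\varepsilon\psi,e^{2i\omega t}\varphi_\omega)$ that keep (or almost keep) the charge fixed, and show that the energy strictly decreases. Since $\gamma=1$ is exactly the bifurcation/threshold value where Theorem \ref{thm1}'s argument becomes degenerate, the stability analysis for $\gamma\neq 1$ no longer applies, and the sign of $\kappa$ must enter through the cubic term $-\tfrac{\kappa}{3}\|u_1\|_{L^3}^3$ in the energy. The key observation should be that when we plug $u_1=\varepsilon\psi$ with $\psi$ real and positive (for instance $\psi=\varphi_\omega$, since $\varphi_\omega$ is precisely the minimizer realizing $\gamma^*=1$ in \eqref{gamma*}), the quadratic part of $S_\omega$ in the $u_1$-direction vanishes identically — this is the content of $\gamma^*=1$ being attained at $v=\varphi_\omega$ — so the leading behavior of $S_\omega(\varepsilon\varphi_\omega,e^{2i\omega t}\varphi_\omega)-S_\omega(0,e^{2i\omega t}\varphi_\omega)$ is governed by the cubic terms $-\tfrac{\kappa}{3}\varepsilon^3\|\varphi_\omega\|_{L^3}^3$ and the cross term $-\tfrac{\gamma}{2}\varepsilon^2\Re\int \varphi_\omega^2\overline{(e^{2i\omega t}\varphi_\omega)}$, which must be reconciled carefully.

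More precisely, I would follow the Grillakis–Shatah–Strauss / Shatah–Strauss style scheme: (i) set up the conserved quantities $E$ and $Q$ and the symmetry group $G(\theta)\tau_y$; (ii) since $(0,\varphi_\omega)$ is a critical point of $S_\omega$ on all of $X$, compute the Hessian $S_\omega''(0,\varphi_\omega)$ and show that at $\gamma=1$ it has a nonpositive (in fact, a null or negative) direction $\vec h$ that is \emph{not} tangent to the orbit and along which $Q$ is stationary to second order; (iii) upgrade this linear-level degeneracy to a genuine instability by constructing, via a Lyapunov functional or via the standard "if there is a curve through $\vec\phi$ on the constraint manifold $\{Q=Q(\vec\phi)\}$ with $S_\omega$ strictly decreasing, then the standing wave is unstable" argument (cf. the mechanism used in \cite{oht} and in the proof of Theorem \ref{thm3}). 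The curve I have in mind is $s\mapsto (s\varphi_\omega,\, \mu(s)e^{i\vartheta}\varphi_\omega)$ with $\mu(s)$ chosen to preserve the charge and the phase $\vartheta$ chosen to make the cross term $-\tfrac{1}{2}\Re\int (s\varphi_\omega)^2 \overline{\mu(s)e^{i\vartheta}\varphi_\omega}$ as negative as possible, i.e. $\vartheta=0$; then expanding $S_\omega$ along this curve and using $\gamma=1$ together with $\kappa>0$ should yield $\tfrac{d}{ds}S_\omega<0$ for small $s>0$, because the cross term contributes at order $s^2$ with a definite negative sign once $\mu(s)$ is adjusted, while $\kappa>0$ ensures the cubic self-interaction term $-\tfrac{\kappa}{3}s^3\|\varphi_\omega\|_{L^3}^3$ reinforces rather than opposes the decrease.

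The main obstacle I anticipate is step (ii)–(iii): showing that the decrease of $S_\omega$ along the constructed curve genuinely forces instability, rather than merely indicating that $(0,\varphi_\omega)$ is a saddle. This requires the abstract instability criterion — typically one needs a conserved auxiliary functional $A$ (built from $J$ and the decreasing direction) with $\{A,\,\cdot\}$ controlling the evolution, plus an invariance of the set $\{S_\omega<d, A>0, Q=Q(\vec\phi)\}$ under the flow — together with the global well-posedness already granted in the excerpt. A secondary technical point is verifying that $Q$ can indeed be held fixed along the curve while keeping $S_\omega$ monotone, which amounts to solving $\tfrac12 s^2\|\varphi_\omega\|_{L^2}^2 + \tfrac12\mu(s)^2\|\varphi_\omega\|_{L^2}^2 = \tfrac12\|\varphi_\omega\|_{L^2}^2$ for $\mu(s)=\sqrt{1-s^2}$ and then checking the resulting one-variable expansion; this is routine but the sign bookkeeping between the $-\tfrac{\kappa}{3}$, $-\tfrac13$, and $-\tfrac{\gamma}{2}$ coefficients at $\gamma=1$ is where the hypothesis $\kappa>0$ is essential and where care is needed. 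Finally, I would note that the same computation with $\kappa<0$ would make these terms compete, which is consistent with the dichotomy announced before Theorems \ref{thm4} and \ref{thm5}.
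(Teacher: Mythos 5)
Your overall strategy --- perturb $(0,\varphi_{\omega})$ in the direction $(\varphi_{\omega},0)$, rescale the second component to preserve the charge, show that $S_{\omega}$ decreases along the resulting curve, and then invoke an abstract instability criterion --- has exactly the shape of the paper's argument (the paper's Lemma \ref{lem:data} constructs precisely your curve, with $\vec \psi=(\varphi_{\omega},0)/\|\varphi_{\omega}\|_{L^2}$). However, there is a concrete error in your sign bookkeeping that invalidates the heart of the computation. You claim that, after adjusting $\mu(s)$ to preserve $Q$, the cross term contributes a definite negative quantity at order $s^2$, with the $\kappa$-term merely reinforcing the decrease. In fact the $O(s^2)$ contributions cancel exactly: with $\mu(s)=\sqrt{1-s^2}$ one has $-\tfrac13(\mu^3-1)\|\varphi_{\omega}\|_{L^3}^3=+\tfrac12 s^2\|\varphi_{\omega}\|_{L^3}^3+O(s^4)$, which precisely offsets the cross term $-\tfrac12 s^2\mu\|\varphi_{\omega}\|_{L^3}^3$ at $\gamma=1$; this cancellation is nothing but the degeneracy $\dual{L_1\varphi_{\omega}}{\varphi_{\omega}}=0$, i.e.\ the fact that $(\varphi_{\omega},0)$ lies in the \emph{kernel} of $S_{\omega}''(0,\varphi_{\omega})$. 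The true expansion is
$S_{\omega}(s\varphi_{\omega},\mu(s)\varphi_{\omega})-S_{\omega}(0,\varphi_{\omega})=-\tfrac{\kappa}{3}|s|^3\|\varphi_{\omega}\|_{L^3}^3+O(s^4)$:
the decrease is third order and is driven \emph{solely} by the term $-\tfrac{\kappa}{3}\|u_1\|_{L^3}^3$. If your claimed $O(s^2)$ decrease were correct, it would hold for every $\kappa$ and would contradict Theorem \ref{thm5}.

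This error propagates into your step (iii). Because the relevant direction is null rather than negative for the Hessian, the standard Grillakis--Shatah--Strauss and Shatah--Strauss criteria (and Proposition \ref{prop3}, used for Theorem \ref{thm3}) do not apply: they require $\dual{S_{\omega}''(\vec \phi)\vec \psi}{\vec \psi}<0$. The paper instead proves a degenerate-case criterion (Proposition \ref{prop4}) whose hypothesis (iii) is the one-sided, $\sgn(\lambda)$-weighted estimate $\sgn(\lambda)\,\dual{S_{\omega}'(\vec \phi+\lambda\vec \psi+\vec z)}{\vec \psi}\le -k_1\lambda^2+k_2\|\vec z\|_X^2+o(\lambda^2+\|\vec z\|_X^2)$; verifying it amounts to isolating $-\kappa\,\Re\int|v_1|v_1\psi_1\,dx$ as the dominant term, which is where $\kappa>0$ enters. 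You correctly flag that upgrading the saddle structure to dynamical instability is the hard step, but the ``standard'' criterion you point to fails in this degenerate situation, and the modified one is not supplied. The proposal therefore needs both the corrected third-order expansion and the degenerate abstract instability theorem to close.
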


\begin{theorem}\label{thm5}
Let $N\le 3$, $\kappa\le 0$ and $\gamma=1$. For any $\omega>0$, 
the standing wave solution $(0,e^{2i\omega t}\varphi_{\omega})$ 
of \eqref{eq:co} is stable.
\end{theorem}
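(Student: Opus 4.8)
The plan is to prove that $\vec\phi=(0,\varphi_\omega)$ is a local minimizer of $E$ on the charge level set $\{Q=Q(\vec\phi)\}$, with a fourth-order coercivity estimate strong enough to run the standard orbital-stability argument; the difference with the case $\gamma\ne 1$ is the degeneracy of $S_\omega''(\vec\phi)$ at the bifurcation value $\gamma=1$.

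First I would compute the second variation. Writing a perturbation of $\vec\phi$ as $(w_1,\varphi_\omega+w_2)$ with $w_j=p_j+iq_j$, the quadratic form $\langle S_\omega''(\vec\phi)\vec w,\vec w\rangle$ decouples into $\langle L_1^+p_1,p_1\rangle+\langle L_1^-q_1,q_1\rangle+\langle L_2^+p_2,p_2\rangle+\langle L_2^-q_2,q_2\rangle$, where, at $\gamma=1$, $L_1^+=L_2^-=-\Delta+\omega-\varphi_\omega$, $L_1^-=-\Delta+\omega+\varphi_\omega$ and $L_2^+=-\Delta+\omega-2\varphi_\omega$ (the term $\|u_1\|_{L^3}^3$ does not contribute to the Hessian since $u_1=0$ at $\vec\phi$; it is exactly at $\gamma=1$ that $L_1^+$ develops a kernel, which is the bifurcation). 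I will record the spectral facts, all classical for the scalar ground state $\varphi_\omega$ of \eqref{scalar}: $L_1^-\ge c>0$; $L_1^+=L_2^-\ge 0$ with one-dimensional kernel $\R\varphi_\omega$ and a spectral gap above $0$; and $L_2^+$ has exactly one negative eigenvalue and kernel $\mathrm{span}\{\partial_j\varphi_\omega\}$, while the subcritical identity $\tfrac{d}{d\omega}\|\varphi_\omega\|_{L^2}^2>0$ (valid for $N\le 3$), equivalently $\langle (L_2^+)^{-1}\varphi_\omega,\varphi_\omega\rangle<0$, gives $\langle L_2^+v,v\rangle\ge 0$ for all $v\perp\varphi_\omega$, with coercivity on $\varphi_\omega^\perp\cap\{\partial_j\varphi_\omega\}^\perp$.

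Next, given $\vec u$ near the orbit with $Q(\vec u)=Q(\vec\phi)$, I would fix the modulation $(\theta,y)$ so that $\vec w:=G(-\theta)\tau_{-y}\vec u-\vec\phi$ is orthogonal in $H$ to the orbit tangent space $\mathrm{span}\{(0,i\varphi_\omega),(0,\partial_j\varphi_\omega)\}$, which forces $q_2\perp\varphi_\omega$ and $p_2\perp\partial_j\varphi_\omega$; the charge constraint then gives $(\varphi_\omega,p_2)_{L^2}=-\tfrac12\|\vec w\|_H^2$. Decomposing $p_1=a\varphi_\omega+p_1^\perp$ and $p_2=s\varphi_\omega+p_2^\perp$ with $p_1^\perp,p_2^\perp\perp\varphi_\omega$, one gets $s=-\tfrac12a^2+O(\|p_1^\perp\|^2+\|q_1\|^2+\|p_2^\perp\|^2+\|q_2\|^2)$. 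The key claim is that there exist $c,\rho>0$ with $E(\vec u)-E(\vec\phi)\ge c\|\vec w\|_X^4$ whenever $\|\vec w\|_X<\rho$. To prove it I would use the exact identity
\[
E(\vec u)-E(\vec\phi)=S_\omega(\vec u)-S_\omega(\vec\phi)=\tfrac12\langle S_\omega''(\vec\phi)\vec w,\vec w\rangle-\tfrac\kappa3\|w_1\|_{L^3}^3-\tfrac\gamma2\Re\int_{\R^N}w_1^2\overline{w_2}\,dx-\tfrac13R(w_2),
\]
where $|R(w_2)|\le C\|w_2\|_{H^1}^3$ (from the $C^{2,1}$ regularity of $z\mapsto|z|^3$ and $H^1\hookrightarrow L^3$). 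The Hessian part controls $\|p_1^\perp\|_{H^1}^2+\|q_1\|_{H^1}^2+\|p_2^\perp\|_{H^1}^2+\|q_2\|_{H^1}^2$ but is blind to $a$; the term $-\tfrac\kappa3\|w_1\|_{L^3}^3\ge 0$ because $\kappa\le 0$; and, crucially, inserting $s=-\tfrac12a^2$ and using $L_1^+\varphi_\omega=0$ and $L_2^+\varphi_\omega=-\varphi_\omega^2$, the $O(a^2)$ terms cancel, the cross terms linear in $p_2^\perp$ cancel, and the fourth-order terms arising from $\tfrac12\langle L_2^+p_2,p_2\rangle$ and from $-\tfrac12\Re\int w_1^2\overline{w_2}$ add up to $(-\tfrac18+\tfrac14)a^4\int_{\R^N}\varphi_\omega^3\,dx=\tfrac18a^4\int_{\R^N}\varphi_\omega^3\,dx>0$. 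Everything else is cubic or higher in the small quantities $(|a|,\|p_1^\perp\|,\|q_1\|,\|p_2^\perp\|,\|q_2\|)$ and is absorbed by the coercive part via Young's inequality, giving $E(\vec u)-E(\vec\phi)\gtrsim a^4+\|p_1^\perp\|_{H^1}^2+\|q_1\|_{H^1}^2+\|p_2^\perp\|_{H^1}^2+\|q_2\|_{H^1}^2\gtrsim\|\vec w\|_X^4$ for $\|\vec w\|_X$ small (when $\kappa<0$ one even gets the stronger bound with $|a|^3$ from the $L^3$ term).

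Finally, orbital stability follows from this estimate together with conservation of $E$ and $Q$ and global well-posedness in $X$, by the usual contradiction argument: if $\vec u_{0,n}\to\vec\phi$ in $X$ while the solutions reach distance $\varepsilon_0<\rho$ from the orbit at times $t_n$, rescale $\vec u_n(t_n)$ by a factor $\lambda_n\to 1$ to restore $Q=Q(\vec\phi)$; the rescaled states still lie at distance $\approx\varepsilon_0$ from the orbit but have $E\to E(\vec\phi)$, contradicting $E(\vec u)-E(\vec\phi)\ge c\varepsilon_0^4>0$. I expect the main obstacle to be exactly the treatment of the degenerate direction $(\varphi_\omega,0)$: it is not generated by a symmetry, so the quadratic form is useless there and one must go to fourth order; the heart of the matter is showing that the coefficient $\tfrac18\int_{\R^N}\varphi_\omega^3\,dx$ is positive. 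This is also where the hypothesis $\kappa\le 0$ enters — for $\kappa>0$ the manifestly negative cubic term $-\tfrac\kappa3|a|^3\int_{\R^N}\varphi_\omega^3\,dx$ dominates, $\vec\phi$ ceases to be a local minimizer, and this is the source of the instability in Theorem \ref{thm4}.
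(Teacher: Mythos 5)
Your proposal is mathematically sound, but it takes a genuinely different route from the paper. The paper proves Theorem \ref{thm5} in three lines as a corollary of the global variational machinery: Proposition \ref{prop5} (stability of the set $\mathcal{G}_{\omega}$ of ground states, via Shatah's method and the compactness of minimizing sequences in Lemma \ref{lem:comp}) combined with Theorem \ref{thm6}(2), which shows that for $\kappa\le 0$, $\gamma=1$ one has $(\kappa,\gamma)\in\mathcal{K}_2$ and hence $\mathcal{G}_{\omega}=\mathcal{G}_{\omega}^0$, i.e.\ the ground-state set is exactly the orbit of $(0,\varphi_{\omega})$; the hard work is thus pushed into the classification of ground states (the line--ellipse geometry and the Sirakov-type comparison argument of Section 5). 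Your argument is instead purely local: you identify the degenerate direction $(\varphi_{\omega},0)\in\ker S_{\omega}''(0,\varphi_{\omega})$ (consistent with $\mathcal{L}_R=\mathrm{diag}(L_1,L_2)$, $\mathcal{L}_I=\mathrm{diag}(L_{-1},L_1)$ from \eqref{slr}--\eqref{sli}) and expand the constrained energy to fourth order along it. I checked the two points on which everything hinges and they are correct: (i) the charge constraint forces $s=-\tfrac12 a^2+O(\mathrm{rest}^2)$, and the resulting cancellation of the $a^2\int\varphi_{\omega}^2 p_2^{\perp}$ cross terms between $\tfrac12\langle L_2 p_2,p_2\rangle$ and $-\tfrac12\Re\int w_1^2\overline{w_2}$ is \emph{essential} (without it Young's inequality would force a large multiple of $a^4$ that destroys the quartic gain); (ii) since $L_2\varphi_{\omega}=-\varphi_{\omega}^2$, the quartic coefficient is indeed $(-\tfrac18+\tfrac14)\int\varphi_{\omega}^3>0$, and $\kappa\le 0$ makes the cubic term $-\tfrac{\kappa}{3}\|w_1\|_{L^3}^3$ harmless. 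Each approach buys something: yours is self-contained, quantitative ($E-E(\vec\phi)\gtrsim \mathrm{dist}^4$), and exposes the mechanism behind the dichotomy at $\kappa=0$ (matching the sign of the cubic term that drives the instability in Theorem \ref{thm4}); the paper's is softer, avoids the delicate bookkeeping of the quartic expansion, and yields the full ground-state classification as a by-product, which is of independent interest. The one thing you should make explicit if you write this up is the modulation lemma (well-definedness and $C^1$ dependence of $(\theta,y)$ near the orbit) and the verification that all remaining cross terms are genuinely $o(a^4)+o(\|\mathrm{rest}\|_{H^1}^2)$; but these are routine once the two cancellations above are in place.
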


\begin{remark}
The linearized operator $S_{\omega}''(0,\varphi_{\omega})$ 
around the semi-trivial standing wave is independent of $\kappa$
(see \eqref{slr} and \eqref{sli} below). 
Therefore, Theorems \ref{thm4} and \ref{thm5} are never obtained 
from the linearized analysis only. 
The proof of Theorem \ref{thm5} relies on the variational method of Shatah \cite{sha} 
and on the characterization of the ground states in Theorem \ref{thm6} below. 
\end{remark}

\begin{remark}
For the case $\gamma=1$, using the notation in Section \ref{sect2}, 
we have $\mathcal{L}_R\vec v=(L_1v_1,L_2v_2)$ and $\mathcal{L}_I\vec v=(L_{-1}v_1,L_1v_2)$, 
and the kernel of $S_{\omega}''(0,\varphi_{\omega})$ contains 
a nontrivial element $(\varphi_{\omega},0)$ other than 
the elements $\nabla (0,\varphi_{\omega})$ and $J(0,\varphi_{\omega})$ 
naturally coming from the symmetries of $S_{\omega}$ 
(see \eqref{Sker} below). 
\end{remark}

Next, we consider the ground state problem for \eqref{sp}. We define 
\begin{equation}\label{kappac}
\kappa_c(\gamma)=\frac{1}{2}(\gamma+2)\sqrt{1-\gamma}, \quad 0<\gamma<1.
\end{equation}
Then, $\kappa_c$ is strictly decreasing on the open interval $]0,1[$, 
$\kappa_c(0)=1$ and $\kappa_c(1)=0$. 
We define a function $\gamma_c$ on $]0,1[$ by the inverse function of $\kappa_c$. 
For the ground state problem, it is convenient to divide the parameter domain 
$\mathcal{D}=\{(\kappa,\gamma):\kappa\in \R,~ \gamma>0\}$ into the following sets 
(see Figure \ref{figure:K}). 
\begin{align*}
&\mathcal{K}_1=\{(\kappa,\gamma):\kappa\le 0,~ \gamma>1\}\cup 
\{(\kappa,\gamma):\kappa\ge 1,~ \gamma>0\} \\
&\hspace{12mm} 
\cup \{(\kappa,\gamma):0<\kappa<1,~ \gamma>\gamma_c(\kappa)\}, \\
&\mathcal{K}_2=\{(\kappa,\gamma):\kappa\le 0,~ 0<\gamma\le 1\}\cup 
\{(\kappa,\gamma):0<\kappa<1,~ 0<\gamma<\gamma_c(\kappa)\}, \\
&\mathcal{K}_3=\{(\kappa,\gamma):0<\kappa<1,~ \gamma=\gamma_c(\kappa)\}.
\end{align*}

\begin{figure}[htbp]
\begin{center}
\scalebox{0.4}{\input{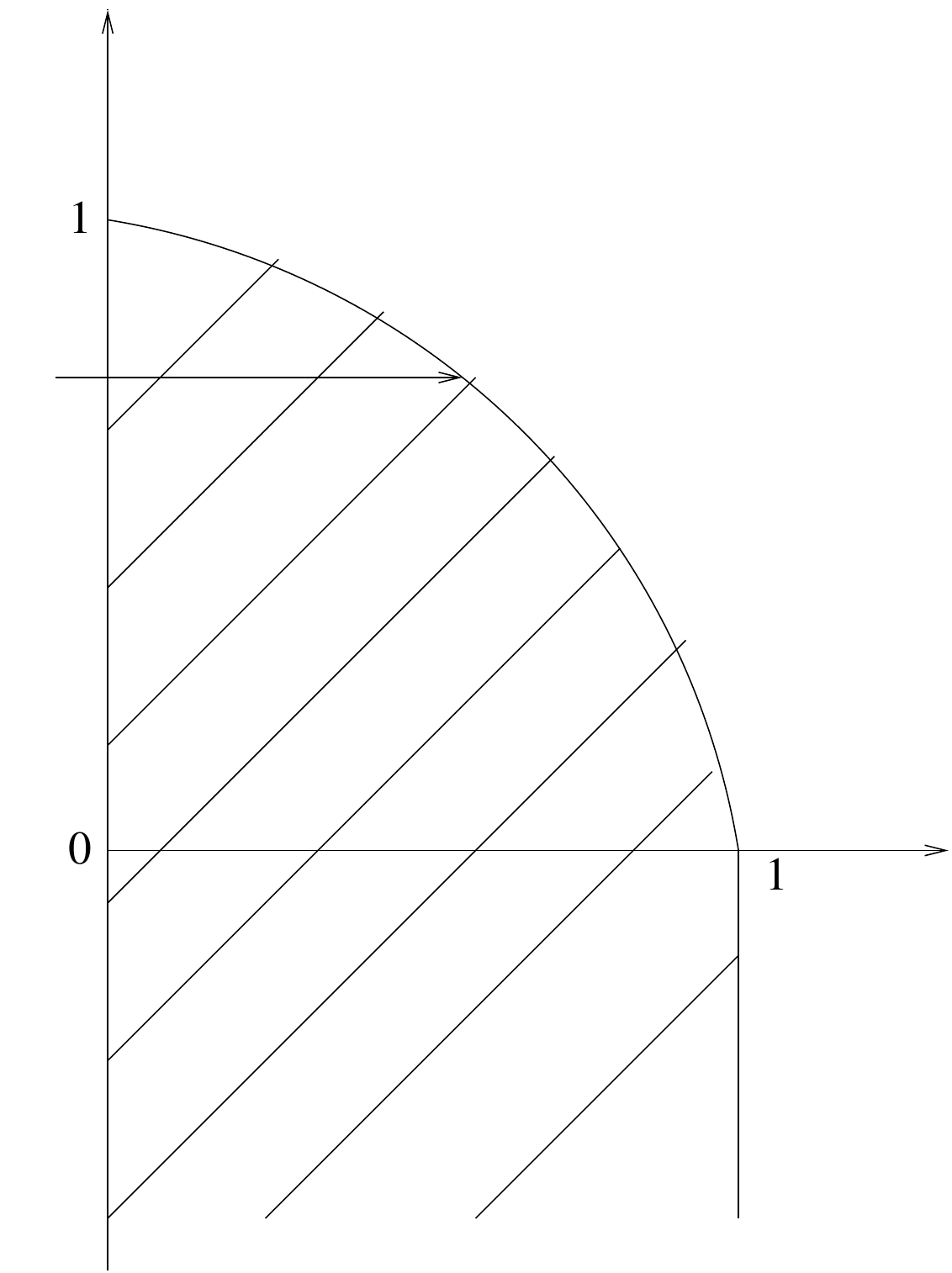_t} }
\caption{The sets $\mathcal{K}_1$, $\mathcal{K}_2$ and $\mathcal{K}_3$}
\label{figure:K}
\end{center}
\end{figure}

Note that $\mathcal{K}_1$, $\mathcal{K}_2$ and $\mathcal{K}_3$ are mutually disjoint, 
and $\mathcal{D}=\mathcal{K}_1\cup \mathcal{K}_2\cup \mathcal{K}_3$. 
Remark also that since $\sqrt{2\gamma (1-\gamma)}<\kappa_c(\gamma)$ 
for $0<\gamma<1$, we have $\mathcal{J}_0\subset \mathcal{K}_2$. 

Moreover, we define 
\begin{align*}
&\mathcal{G}_{\omega}^0
=\{G(\theta)\tau_y(0,\varphi_{\omega}):\theta\in \R,~ y\in \R^N\}, \\
&\mathcal{G}_{\omega}^1
=\{G(\theta)\tau_y(\alpha_{+}\varphi_{\omega}, \beta_{-}\varphi_{\omega}):
\theta\in \R,~ y\in \R^N\}. 
\end{align*}
Then, the set $\mathcal{G}_{\omega}$ of the ground states for \eqref{sp} 
is determined as follows. 

\begin{theorem}\label{thm6}
Let $N\le 3$ and $\omega>0$. 
\par \noindent $({\rm 1})$ \hspace{1mm}
If $(\kappa,\gamma)\in \mathcal{K}_1$, then 
$\mathcal{G}_{\omega}=\mathcal{G}_{\omega}^1$. 
\par \noindent $({\rm 2})$ \hspace{1mm}
If $(\kappa,\gamma)\in \mathcal{K}_2$, then 
$\mathcal{G}_{\omega}=\mathcal{G}_{\omega}^0$. 
\par \noindent $({\rm 3})$ \hspace{1mm}
If $(\kappa,\gamma)\in \mathcal{K}_3$, then 
$\mathcal{G}_{\omega}=\mathcal{G}_{\omega}^0\cup \mathcal{G}_{\omega}^1$. 
\end{theorem}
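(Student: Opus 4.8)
The plan is to determine the ground-state set by comparing the action values of the two families of candidate solutions—the semi-trivial $(0,\varphi_\omega)$ and the positive vector solution $(\alpha_+\varphi_\omega,\beta_-\varphi_\omega)$—and then to show that no other nontrivial solution of \eqref{sp} can beat both of them. The first part is essentially a computation. Because $(0,\varphi_\omega)$ and $(\alpha_\pm\varphi_\omega,\beta_\mp\varphi_\omega)$ are all of the form $(\text{const})\varphi_\omega$, and $\varphi_\omega$ solves \eqref{scalar}, we can evaluate $S_\omega$ on each via the Nehari-type identity $S_\omega(\vec v)=\frac16\bigl(\kappa\|v_1\|_{L^3}^3+\|v_2\|_{L^3}^3+\tfrac{3\gamma}{2}\Re\int v_1^2\overline{v_2}\bigr)$ (valid when $S_\omega'(\vec v)=0$), reducing everything to the single number $c_0:=\|\varphi_\omega\|_{L^3}^3$. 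One finds $S_\omega(0,\varphi_\omega)=\tfrac{c_0}{6}$ and, using \eqref{eq:ab}, $S_\omega(\alpha_\pm\varphi_\omega,\beta_\mp\varphi_\omega)=\tfrac{c_0}{6}(\kappa\alpha_\pm^3+\beta_\mp^3+\tfrac{3\gamma}{2}\alpha_\pm^2\beta_\mp)$. Exploiting \eqref{eq:ab} to simplify this cubic expression, I expect the comparison $S_\omega(\alpha_+\varphi_\omega,\beta_-\varphi_\omega)\lessgtr S_\omega(0,\varphi_\omega)$ to reduce exactly to the sign of $\kappa-\kappa_c(\gamma)$ (equivalently $\gamma-\gamma_c(\kappa)$), which is how the curve $\kappa_c$ defined in \eqref{kappac} enters; the critical case $\mathcal{K}_3$ is where equality holds. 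One also checks, using Proposition \ref{prop1}(2), that on $\mathcal{J}_2$ the solution $(\alpha_-\varphi_\omega,\beta_+\varphi_\omega)$ has strictly larger action than $(\alpha_+\varphi_\omega,\beta_-\varphi_\omega)$, so it never competes.

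The harder part is showing that $d(\omega)$ is actually attained and that \emph{every} minimizer lies in $\mathcal{G}_\omega^0\cup\mathcal{G}_\omega^1$. For existence I would set up the standard constrained minimization: minimize $S_\omega$ on the Nehari manifold $\mathcal{N}_\omega=\{\vec v\neq 0:\dual{S_\omega'(\vec v)}{\vec v}=0\}$, observe that $S_\omega$ is (up to the sign-definite cubic terms) coercive there because $1<3<1+4/N$ for $N\le 3$, and run the concentration–compactness / Lions argument; the subadditivity inequality needed to rule out dichotomy follows from the scaling structure. This gives a minimizer $\vec u=(u_1,u_2)$ of $S_\omega$ over $\mathcal{A}_\omega$. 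The key structural claim is then: any ground state has $u_1\ge 0$, $u_2\ge 0$ (after multiplying components by suitable phases), both components radial and radially decreasing about a common center, and—this is the crux—$u_1$ and $u_2$ are each a scalar multiple of $\varphi_\omega$. For the sign/radial part I would use the usual rearrangement: replacing $(u_1,u_2)$ by $(|u_1|,|u_2|)$ does not increase $\tfrac12\|\nabla\vec u\|_H^2$ and does not decrease $\Re\int u_1^2\overline{u_2}$ (here one needs $\gamma>0$), and Schwarz symmetrization of both components simultaneously decreases the gradient term while the coupling term $\int |u_1|^2|u_2|$ only grows; strict-inequality cases in Pólya–Szegő force radial symmetry and monotonicity.

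To force $u_j=(\text{const})\varphi_\omega$ I would argue that a ground state must have \emph{both} components nonzero or \emph{both} zero in a coupled sense: if say $u_2\equiv0$ then \eqref{sp} forces $u_1$ to solve the scalar equation and $S_\omega(u_1,0)=\tfrac{\kappa}{6}\|u_1\|_{L^3}^3$, which is $\le 0$ when $\kappa\le0$—contradicting minimality against the strictly positive value $\tfrac{c_0}{6}$ of $(0,\varphi_\omega)$—and for $\kappa>0$ a further comparison (rescaling $\varphi_\omega$) shows $(0,\varphi_\omega)$ still wins, so $u_1\equiv0$ in that degenerate branch; if $u_1\equiv0$ then $u_2$ solves the scalar equation and equals $\varphi_\omega$ by uniqueness, giving $\mathcal{G}_\omega^0$. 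In the genuinely coupled case $u_1>0$, $u_2>0$ on $\R^N$: I would test the minimality of $\vec u$ against the one-parameter family obtained by scaling each component, or invoke the result (from \cite{CCO1,CCO2} or a direct argument) that a positive solution of \eqref{sp} with minimal action must be proportional to $\varphi_\omega$ in each slot—the point being that the system decouples into two copies of \eqref{scalar} once one knows the ratio $u_1/u_2$ is constant, and constancy of the ratio follows by a strong-maximum-principle / ODE-uniqueness argument applied to $w=u_1-t u_2$ for the optimal $t$. This identifies the coupled minimizer with $(\alpha_+\varphi_\omega,\beta_-\varphi_\omega)$ (the $\alpha_+,\beta_-$ branch being the one of smaller action among points of $\mathcal{S}_{\kappa,\gamma}$). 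Combining with the action comparison of the first paragraph then yields exactly the three cases. The main obstacle I anticipate is precisely this last rigidity step—proving that a positive vector ground state is forced onto the line $\C\varphi_\omega\times\C\varphi_\omega$ rather than being some genuinely two-profile solution—so I would be prepared to spend most of the work there, possibly replacing the ratio argument by a variational comparison that directly shows any positive solution \emph{not} of that form has strictly larger action than some element of $\mathcal{G}_\omega^0\cup\mathcal{G}_\omega^1$.
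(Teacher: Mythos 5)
Your first paragraph (the action comparison among the explicit candidates, reducing via the Nehari identity to comparing $\alpha^2+\beta^2$ with $1$ and recovering the curve $\kappa_c$ of \eqref{kappac}) is correct and corresponds to what the paper encodes in Lemmas \ref{lem31} and \ref{lem32} and in the definition of $\ell$. The existence of a minimizer is also handled as you suggest (Lemma \ref{lem:comp}, quoted as standard). The genuine gap is the step you yourself flag as the crux: showing that \emph{every} ground state has both components proportional to $\varphi_{\omega}$ (up to phase and translation). Symmetrization only yields positive, radially decreasing components; it does not force the profile. The ratio/maximum-principle argument you sketch for $w=u_1-tu_2$ is not available here: the two equations in \eqref{sp} carry different nonlinearities ($\kappa|\phi_1|\phi_1+\gamma\overline{\phi_1}\phi_2$ versus $|\phi_2|\phi_2+(\gamma/2)\phi_1^2$), so $w$ does not satisfy a usable linear equation, and more tellingly, any purely local rigidity argument that ignores $(\kappa,\gamma)$ cannot work, since the answer itself switches between a coupled and a semi-trivial minimizer as $(\kappa,\gamma)$ crosses $\mathcal{K}_3$. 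As written, the proposal does not prove the theorem.

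The paper closes this gap by a global variational device (Sirakov's trick) that bypasses symmetrization and maximum principles entirely. Multiplying each equation of \eqref{sp} by the corresponding component gives the componentwise identities of Lemma \ref{lem41}; combining these with the sharp characterization \eqref{min1} of $\varphi_{\omega}$ as minimizer of $\|v\|_{H^1_{\omega}}^2/\|v\|_{L^3}^2$ and with H\"older applied to the coupling term yields the inequalities $a^2\le a^2(\kappa a+\gamma b)$ and $2b\le 2b^2+\gamma a^2$ for $a=\|u_1\|_{L^3}/\|\varphi_{\omega}\|_{L^3}$, $b=\|u_2\|_{L^3}/\|\varphi_{\omega}\|_{L^3}$, while the trial-function bound $6d(\omega)\le \ell\|\varphi_{\omega}\|_{L^3}^3$ of Lemma \ref{lem42} gives $a^2+b^2\le\ell$. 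Thus $(a,b)$ is confined to the region $E_1\cap E_2$, whose elementary planar geometry (Lemmas \ref{lem35} and \ref{lem38}: the line $\kappa x+\gamma y=1$ is squeezed between the tangent and the normal to the circle $x^2+y^2=\ell$ at $(\alpha_+,\beta_-)$) forces $(a,b)\in\{(\alpha_+,\beta_-),(0,1)\}$ according to the region $\mathcal{K}_i$. All the inequalities then become equalities, and the equality cases of \eqref{min1} and of H\"older identify $u_1$, $u_2$ with phase--translates of $\alpha_+\varphi_{\omega}$, $\beta_-\varphi_{\omega}$ with matching phases and centers. If you want to salvage your route, you would need to either import this argument or find a genuine substitute for the rigidity step; I recommend the former, as it is exactly where all the work of Section \ref{sect5} is concentrated.
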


The rest of the paper is organized as follows. 
In Section \ref{sect2}, we study some spectral properties 
of the linearized operators around standing waves, 
which are needed in Sections \ref{sect3} and \ref{sect4}. 
In Section \ref{sect3}, we prove Theorems \ref{thm2} and \ref{thm3}, 
while Section \ref{sect4} is devoted to the proof of Theorem \ref{thm4}. 
In Section \ref{sect5}, we study the ground state problem for \eqref{sp}, 
and prove Theorem \ref{thm6}. 
Finally, Theorem \ref{thm5} is proved as a corollary of Theorem \ref{thm6}. 

\section{Linearized Operators}\label{sect2}

In this section, we study spectral properties 
of the linearized operator $S_{\omega}''(\Phi)$. 
Here and hereafter, for $\alpha\ge 0$ and $\beta>0$, we put 
$$\Phi=(\alpha \varphi_{\omega},\beta \varphi_{\omega}), \quad 
\Phi_1=(-\beta \varphi_{\omega},\alpha \varphi_{\omega}), \quad 
\Phi_2=(\alpha \varphi_{\omega},2\beta \varphi_{\omega}).$$
First, by direct computations, we have 
\begin{equation}\label{slri}
\dual{S_{\omega}''(\Phi)\vec u}{\vec u}
=\dual{\mathcal{L}_R \Re \vec u}{\Re \vec u}
+\dual{\mathcal{L}_I \Im \vec u}{\Im \vec u}
\end{equation}
for $\vec u=(u_1,u_2)\in X$, 
where $\Re \vec u=(\Re u_1,\Re u_2)$, $\Im \vec u=(\Im u_1,\Im u_2)$, and 
\begin{align}
&\mathcal{L}_R=\left[\begin{array}{cc}
-\Delta+\omega & 0 \\
0 & -\Delta+\omega
\end{array}\right]
-\left[\begin{array}{cc}
(2\alpha+\gamma \beta) \varphi_{\omega} & \gamma \alpha \varphi_{\omega} \\
\gamma \alpha \varphi_{\omega} & 2\beta \varphi_{\omega}
\end{array}\right], 
\label{slr}\\
&\mathcal{L}_I=\left[\begin{array}{cc}
-\Delta+\omega & 0 \\
0 & -\Delta+\omega
\end{array}\right]
-\left[\begin{array}{cc}
(\alpha-\gamma \beta) \varphi_{\omega} & \gamma \alpha \varphi_{\omega} \\
\gamma \alpha \varphi_{\omega} & \beta \varphi_{\omega}
\end{array}\right]. \label{sli}
\end{align}
Since $S_{\omega}'(G(\theta)\tau_y\Phi)=0$ 
for $y\in \R^N$ and $\theta \in \R$, we see that 
\begin{equation}\label{Sker}
\nabla \Phi\in \ker \mathcal{L}_R, \quad \Phi_2\in  \ker \mathcal{L}_I. 
\end{equation}
For $a\in \R$, we define $L_a$ by 
$$L_av=-\Delta v+\omega v-a\varphi_{\omega} v, \quad v\in H^1(\R^N,\R).$$
We recall some known results on $L_a$. 

\begin{lemma}\label{lem:pos}
Let $N\le 3$ and let $\varphi_{\omega}$ be 
the positive radial solution of \eqref{scalar}.
\par \noindent $({\rm 1})$ \hspace{1mm}
$L_2$ has one negative eigenvalue, 
$\ker L_2$ is spanned by $\{\nabla \varphi_{\omega}\}$, 
and there exists a constant $c_1>0$ such that $\dual{L_2v}{v}\ge c_1 \|v\|_{H^1}^2$ 
for all $v\in H^1(\R^N,\R)$ satisfying 
$(v,\varphi_{\omega})_{L^2}=0$ and $(v,\nabla \varphi_{\omega})_{L^2}=0$. 
\par \noindent $({\rm 2})$ \hspace{1mm}
$L_1$ is non-negative, $\ker L_1$ is spanned by $\{\varphi_{\omega}\}$, 
and there exists $c_2>0$ such that $\dual{L_1v}{v} \ge c_2 \|v\|_{H^1}^2$ 
for all $v\in H^1(\R^N,\R)$ satisfying $(v,\varphi_{\omega})_{L^2}=0$. 
\par \noindent $({\rm 3})$ \hspace{1mm}
If $a<1$, then $L_a$ is positive on $H^1(\R^N,\R)$. 
\par \noindent $({\rm 4})$ \hspace{1mm}
If $1<a<2$, then $\dual{L_a \varphi_{\omega}}{\varphi_{\omega}}<0$, 
and there exists $c_4>0$ such that $\dual{L_a v}{v}\ge c_4 \|v\|_{H^1}^2$ 
for all $v\in H^1(\R^N,\R)$ satisfying $(v,\varphi_{\omega})_{L^2}=0$. 
\end{lemma}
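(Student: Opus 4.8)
The plan is to reduce the spectral analysis of each $2\times 2$ matrix operator $\mathcal{L}_R$ and $\mathcal{L}_I$ to the scalar operators $L_a$ via the known facts about the single equation \eqref{scalar}. For Lemma~\ref{lem:pos} itself, parts (1) and (2) are essentially classical and I would simply invoke the standard theory for the scalar nonlinear Schr\"odinger ground state: $\varphi_\omega$ being the unique positive radial solution of \eqref{scalar}, it is the minimizer of the associated constrained variational problem, and nondegeneracy of the linearized operator $L_2$ (whose potential is $2\varphi_\omega$, the derivative of $|\varphi|\varphi$ at $\varphi=\varphi_\omega>0$) is known from Weinstein-type arguments (see \cite{BC,CL} and \cite[Chapter 8]{caz}); similarly $L_1\varphi_\omega = -\Delta\varphi_\omega + \omega\varphi_\omega - \varphi_\omega\cdot\varphi_\omega = 0$ by \eqref{scalar}, so $\varphi_\omega \in \ker L_1$, and $L_1\ge 0$ follows because $\varphi_\omega>0$ is a ground state (an eigenfunction with no sign change must be associated with the lowest eigenvalue, here $0$). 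The coercivity-away-from-kernel statements in (1) and (2) follow from the discreteness of the spectrum below the essential spectrum $[\omega,\infty)$ together with the identification of the kernels.

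For part (3), if $a<1$ then for any $v\in H^1(\R^N,\R)\setminus\{0\}$ one writes $\dual{L_a v}{v} = \dual{L_1 v}{v} + (1-a)\int \varphi_\omega |v|^2 \,dx$. The first term is $\ge 0$ by part (2), and the second term is strictly positive since $\varphi_\omega>0$ and $v\not\equiv 0$; hence $L_a$ is positive. (Positivity here should be read as $\dual{L_a v}{v}>0$ for $v\ne 0$, not as uniform coercivity, which would fail as $a\uparrow 1$.)

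For part (4), the claim $\dual{L_a\varphi_\omega}{\varphi_\omega}<0$ for $1<a<2$ is the easy half: $\dual{L_a\varphi_\omega}{\varphi_\omega} = \dual{L_1\varphi_\omega}{\varphi_\omega} + (1-a)\int \varphi_\omega^3\,dx = (1-a)\int\varphi_\omega^3\,dx<0$ since $L_1\varphi_\omega=0$ and $a>1$. The coercivity on the codimension-one subspace $\{v:(v,\varphi_\omega)_{L^2}=0\}$ is where the real work lies: one knows it for $L_1$ (constant $c_2$) and for $L_2$ (constant $c_1$, but on the smaller subspace also orthogonal to $\nabla\varphi_\omega$). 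I would argue by a standard spectral/continuity argument: write $L_a = L_1 + (1-a)\varphi_\omega$, a relatively compact perturbation, so $L_a$ restricted to $\varphi_\omega^\perp$ has essential spectrum $[\omega,\infty)$ and finitely many eigenvalues below it depending continuously (and monotonically decreasing in $a$) on $a$. At $a=1$ the bottom of the spectrum of $L_1|_{\varphi_\omega^\perp}$ is $\ge c_2>0$; the only way coercivity can be lost as $a$ increases past $1$ is for an eigenvalue to cross $0$, and the first such crossing happens at some $a^*>1$. The key point is to show $a^* \ge 2$, equivalently that $L_2$ is non-negative on $\varphi_\omega^\perp$; but this follows from part (1), since on $\varphi_\omega^\perp$ the operator $L_2$ has $\nabla\varphi_\omega$ in its kernel and is $\ge c_1>0$ on the orthogonal complement of $\nabla\varphi_\omega$ within $\varphi_\omega^\perp$, so in particular $\dual{L_2 v}{v}\ge 0$ for all $v\in\varphi_\omega^\perp$. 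Hence for $1<a<2$ the bottom eigenvalue of $L_a|_{\varphi_\omega^\perp}$ is strictly positive (strictly between the value at $a=1$ and $a=2$, and $0$ is attained only in the limit $a\to 2$ along the $\nabla\varphi_\omega$ direction, which is excluded), giving the uniform constant $c_4>0$.

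The main obstacle is precisely this last coercivity statement in (4): one must rule out an eigenvalue of $L_a|_{\varphi_\omega^\perp}$ touching zero for $a$ strictly inside $(1,2)$. The clean way is the monotonicity-in-$a$ argument above combined with the boundary data at $a=1$ and $a=2$ supplied by parts (2) and (1) respectively; alternatively one can use the explicit quadratic-form inequality $\dual{L_a v}{v} = \frac{2-a}{1}\dual{L_1 v}{v} + \frac{a-1}{1}\dual{L_2 v}{v}$ valid for $1\le a\le 2$ (convex combination, since $L_a$ is affine in $a$ through its potential), which on $\varphi_\omega^\perp$ expresses $\dual{L_a v}{v}$ as a convex combination of two non-negative quantities, one of which ($\dual{L_1 v}{v}$) is coercive with constant $c_2$; thus $\dual{L_a v}{v}\ge (2-a)c_2\|v\|_{H^1}^2$ and we may take $c_4=(2-a)c_2>0$. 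This convex-combination identity is the cleanest route and avoids any delicate perturbation bookkeeping, so I would present that as the proof of (4) and relegate the spectral-continuity picture to a remark.
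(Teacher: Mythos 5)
Your proposal is correct and follows essentially the same route as the paper, which simply cites Weinstein for parts (1) and (2) and asserts that (3) and (4) ``follow immediately''; your convex-combination identity $L_a=(2-a)L_1+(a-1)L_2$ on $\varphi_\omega^{\perp}$ is the natural way to make (4) precise and yields the clean constant $c_4=(2-a)c_2$. One caveat on (3): you should retain coercivity rather than mere strict positivity --- for each fixed $a<1$ one has $\dual{L_a v}{v}\ge (1-a)\min(1,\omega)\|v\|_{H^1}^2$ (write $L_a=(1-a)(-\Delta+\omega)+aL_1$ for $0\le a<1$, and the bound is trivial for $a\le 0$), which is the form actually invoked in Lemmas \ref{Lr1} and \ref{Li1}; your parenthetical that uniform coercivity ``would fail as $a\uparrow 1$'' is true only uniformly in $a$, not for a fixed $a<1$.
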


\begin{proof}
Parts (1) and (2) are well-known (see \cite{wei1}). 
Note that the quadratic nonlinearity in \eqref{scalar} 
is $L^2$-subcritical if and only if $N\le 3$, 
and that the assumption $N\le 3$ is essential for (1). 
Parts (3) and (4) follow from (1) and (2) immediately. 
\end{proof}

In the next lemma, we give the diagonalization of $\mathcal{L}_R$ and $\mathcal{L}_I$.

\begin{lemma}\label{diag}
By orthogonal matrices 
$$A=\frac{1}{\sqrt{\alpha^2+\beta^2}}
\left[\begin{array}{cc}
\alpha & \beta \\
-\beta & \alpha 
\end{array}\right], \quad 
B=\frac{1}{\sqrt{\alpha^2+4\beta^2}}
\left[\begin{array}{cc}
\alpha & 2\beta \\
-2\beta & \alpha 
\end{array}\right],$$
$\mathcal{L}_R$ and $\mathcal{L}_I$ are diagonalized as follows: 
$$\mathcal{L}_R=A^*
\left[\begin{array}{cc}
L_2 & 0 \\
0 & L_{(2-\gamma)\beta}
\end{array}\right]A, \quad 
\mathcal{L}_I=B^*
\left[\begin{array}{cc}
L_1 & 0 \\
0 & L_{(1-2\gamma)\beta}
\end{array}\right]B.$$
\end{lemma}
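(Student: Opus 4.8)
The plan is to reduce both statements to an elementary eigenvalue computation for two constant $2\times 2$ real symmetric matrices. From \eqref{slr}--\eqref{sli}, write
\[
\mathcal{L}_R=(-\Delta+\omega)I_2-\varphi_{\omega}M_R,\qquad
\mathcal{L}_I=(-\Delta+\omega)I_2-\varphi_{\omega}M_I,
\]
where $I_2$ is the $2\times 2$ identity, $-\Delta+\omega$ acts componentwise, and $M_R$, $M_I$ are the constant, real, symmetric matrices occurring in the potential parts of \eqref{slr} and \eqref{sli}. Since $A$ and $B$ are \emph{constant} orthogonal matrices and the scalar operator $-\Delta+\omega$ commutes with multiplication by any constant matrix, conjugating the first summand changes nothing: $A^{*}[(-\Delta+\omega)I_2]A=(-\Delta+\omega)A^{*}A=(-\Delta+\omega)I_2$, and similarly for $B$. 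Hence the two claimed operator factorizations are equivalent to the purely algebraic identities
\[
A M_R A^{*}=\begin{bmatrix}2 & 0\\ 0 & (2-\gamma)\beta\end{bmatrix},\qquad
B M_I B^{*}=\begin{bmatrix}1 & 0\\ 0 & (1-2\gamma)\beta\end{bmatrix},
\]
that is, to the assertions that the rows of $A$ (resp.\ $B$) are unit eigenvectors of $M_R$ (resp.\ $M_I$) with the indicated eigenvalues.

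To verify these I would use that $\Phi$ satisfies $S_{\omega}'(\Phi)=0$, equivalently that $(\alpha,\beta)$ satisfies the relations \eqref{eq:ab}, namely $\kappa\alpha+\gamma\beta=1$ and $\gamma\alpha^{2}+2\beta^{2}=2\beta$ (together with $-\Delta\varphi_{\omega}+\omega\varphi_{\omega}=\varphi_{\omega}^{2}$). A short computation then shows that $(\alpha,\beta)^{\top}$ is an eigenvector of $M_R$ with eigenvalue $2$ and that $(\alpha,2\beta)^{\top}$ is an eigenvector of $M_I$ with eigenvalue $1$; these are precisely the ``physical'' directions attached to $\Phi=(\alpha\varphi_{\omega},\beta\varphi_{\omega})$ and $\Phi_2=(\alpha\varphi_{\omega},2\beta\varphi_{\omega})$, in agreement with \eqref{Sker} and with $L_2\varphi_{\omega}=-\varphi_{\omega}^{2}$, $L_1\varphi_{\omega}=0$. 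Because $M_R$ and $M_I$ are real symmetric and $(\alpha,\beta)\ne(0,0)$ (as $\beta>0$), the orthogonal complements $(-\beta,\alpha)^{\top}$ and $(-2\beta,\alpha)^{\top}$ are automatically the remaining eigenvectors, with eigenvalues $\operatorname{tr}M_R-2$ and $\operatorname{tr}M_I-1$; substituting \eqref{eq:ab} once more collapses these to $(2-\gamma)\beta$ and $(1-2\gamma)\beta$. Normalizing the four eigenvectors yields exactly the rows of $A$ and $B$, and assembling the pieces gives the two factorizations.

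I do not expect a genuine obstacle here: the statement is essentially bookkeeping, and the only step deserving attention is the correct and repeated use of the two identities in \eqref{eq:ab} --- available because the ansatz $\Phi$ is a critical point of $S_{\omega}$ --- so that no stray dependence on $\alpha,\beta$ survives in the eigenvalues. As a useful sanity check one may verify directly that $A\,\nabla\Phi$ has its first component in $\ker L_2=\mathrm{span}\{\nabla\varphi_{\omega}\}$ and $B\,\Phi_2$ has its first component in $\ker L_1=\mathrm{span}\{\varphi_{\omega}\}$, which reproduces \eqref{Sker} through Lemma \ref{lem:pos}(1)--(2) and confirms the signs and normalizations.
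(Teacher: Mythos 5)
Your proof is correct and is exactly the ``straightforward computation'' that the paper omits: reduce to diagonalizing the constant symmetric matrices $M_R$, $M_I$, check that $(\alpha,\beta)^{\top}$ and $(\alpha,2\beta)^{\top}$ are eigenvectors with eigenvalues $2$ and $1$ via \eqref{eq:ab}, and get the remaining eigenvalues from the traces. One caveat: the computation closes only if the $(1,1)$ entries of the potential matrices are $(2\kappa\alpha+\gamma\beta)\varphi_{\omega}$ and $(\kappa\alpha-\gamma\beta)\varphi_{\omega}$ (which is what $S_{\omega}''(\Phi)$ actually produces); as printed, \eqref{slr}--\eqref{sli} drop the factor $\kappa$, and with those literal entries $(\alpha,\beta)^{\top}$ would not be an eigenvector of $M_R$ with eigenvalue $2$ unless $\kappa=1$ or $\alpha=0$, so you are right to trust the second-derivative computation over the displayed formulas.
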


\begin{proof}
The computation is straightforward, and we omit the details.
\end{proof}

The next three lemmas establish the coercivity properties 
of the operators $\mathcal{L}_R$ and $\mathcal{L}_I$. 
They represent the main results of this section, 
and are the key points in the proofs of Theorems \ref{thm2} and \ref{thm3}.

\begin{lemma}\label{Lr1}
If $(2-\gamma)\beta<1$, then there exists a constant $\delta_1>0$ such that 
$\dual{\mathcal{L}_R \vec v}{\vec v}\ge \delta_1 \|\vec v\|_{X}^2$
for all $\vec v\in H^1(\R^N,\R)^2$ satisfying 
$(\vec v,\Phi)_{H}=0$ and $(\vec v,\nabla \Phi)_{H}=0$. 
\end{lemma}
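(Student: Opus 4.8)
The plan is to use the diagonalization of $\mathcal{L}_R$ from Lemma \ref{diag} to reduce the coercivity of $\mathcal{L}_R$ on $H^1(\R^N,\R)^2$ to the coercivity of the scalar operators $L_2$ and $L_{(2-\gamma)\beta}$, and then to exploit the orthogonality conditions $(\vec v,\Phi)_H=0$ and $(\vec v,\nabla \Phi)_H=0$ to kill the non-positive directions of $L_2$. Write $A\vec v=(w_1,w_2)$, so that $\dual{\mathcal{L}_R\vec v}{\vec v}=\dual{L_2w_1}{w_1}+\dual{L_{(2-\gamma)\beta}w_2}{w_2}$; since $A$ is orthogonal and $x$-independent, it is an isometry of $H^1(\R^N,\R)^2$, so it suffices to bound the right-hand side below by $\delta_1(\|w_1\|_{H^1}^2+\|w_2\|_{H^1}^2)$ under the transformed constraints.

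First I would handle the second summand: by hypothesis $(2-\gamma)\beta<1$, so part (3) of Lemma \ref{lem:pos} gives $\dual{L_{(2-\gamma)\beta}w_2}{w_2}\ge 0$ for all $w_2$, and in fact positivity plus the form of $L_a$ (a compact perturbation of $-\Delta+\omega$ with $a<1$) yields $\dual{L_{(2-\gamma)\beta}w_2}{w_2}\ge c\|w_2\|_{H^1}^2$ for some $c>0$; one can see this by noting the bottom of the spectrum of $L_a$ for $a<1$ is a positive number $\mu_a$, so $\dual{L_a w_2}{w_2}\ge \mu_a\|w_2\|_{L^2}^2$, and then interpolating with the gradient term as usual. (Alternatively, if one wants to be careful near $a=1$, note that $(2-\gamma)\beta$ stays bounded away from $1$ for fixed $(\kappa,\gamma)$, so there is no uniformity issue.) For the first summand, I would use the orthogonality: since $\Phi=(\alpha\varphi_\omega,\beta\varphi_\omega)$, the first component of $A\Phi$ is $\sqrt{\alpha^2+\beta^2}\,\varphi_\omega$ and the second is $0$; hence $(\vec v,\Phi)_H=0$ translates into $(w_1,\varphi_\omega)_{L^2}=0$. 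Similarly $A(\nabla\Phi)$ has first component $\sqrt{\alpha^2+\beta^2}\,\nabla\varphi_\omega$ and second component $0$, so $(\vec v,\nabla\Phi)_H=0$ gives $(w_1,\nabla\varphi_\omega)_{L^2}=0$. Therefore $w_1$ satisfies exactly the two orthogonality conditions required in part (1) of Lemma \ref{lem:pos}, which yields $\dual{L_2w_1}{w_1}\ge c_1\|w_1\|_{H^1}^2$.

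Combining the two estimates gives $\dual{\mathcal{L}_R\vec v}{\vec v}\ge \min\{c_1,c\}\,(\|w_1\|_{H^1}^2+\|w_2\|_{H^1}^2)=\delta_1\|\vec v\|_X^2$, with $\delta_1=\min\{c_1,c\}$, using once more that $A$ preserves the $H^1$ norm componentwise. The main obstacle, such as it is, is purely bookkeeping: one must check that the orthogonal change of variables commutes with $\nabla$ and with taking $L^2$/$H^1$ norms (true since $A$ has constant entries and is orthogonal), and that the image of the constraint vectors $\Phi,\nabla\Phi$ under $A$ lies entirely in the first coordinate — which is exactly why the matrix $A$ was chosen with first row proportional to $(\alpha,\beta)$. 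There is no analytic difficulty beyond Lemma \ref{lem:pos}; the only point requiring a moment's thought is upgrading the bare positivity of $L_{(2-\gamma)\beta}$ in Lemma \ref{lem:pos}(3) to an $H^1$-coercive bound, which follows from the spectral gap of $-\Delta+\omega-a\varphi_\omega$ below its essential spectrum $[\omega,\infty)$ when $a<1$.
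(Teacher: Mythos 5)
Your proof is correct and follows essentially the same route as the paper: diagonalize $\mathcal{L}_R$ via Lemma \ref{diag}, observe that the orthogonality conditions on $\vec v$ become exactly the conditions $(w_1,\varphi_\omega)_{L^2}=(w_1,\nabla\varphi_\omega)_{L^2}=0$ required by Lemma \ref{lem:pos}(1), and apply Lemma \ref{lem:pos}(3) to the second component. Your extra remark on upgrading the ``positivity'' of $L_{(2-\gamma)\beta}$ in Lemma \ref{lem:pos}(3) to an $H^1$-coercive bound via the spectral gap below the essential spectrum $[\omega,\infty)$ is a point the paper glosses over (it directly writes $\dual{L_{(2-\gamma)\beta}w_2}{w_2}\ge c_3\|w_2\|_{H^1}^2$), and is a welcome clarification.
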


\begin{proof}
By Lemma \ref{diag}, we have 
$\dual{\mathcal{L}_R \vec v}{\vec v}
=\dual{L_2w_1}{w_1}+\dual{L_{(2-\gamma)\beta}w_2}{w_2}$, 
where $\vec w=A\vec v$. Since we have 
$$(w_1,\varphi_{\omega})_{L^2}
=\frac{(\vec v, \Phi)_{H}}{\sqrt{\alpha^2+\beta^2}}=0, \quad 
(w_1,\nabla \varphi_{\omega})_{L^2}
=\frac{(\vec v,\nabla \Phi)_{H}}{\sqrt{\alpha^2+\beta^2}}=0,$$
it follows from Lemma \ref{lem:pos} (1) that 
$\dual{L_2 w_1}{w_1}\ge c_1 \|w_1\|_{H^1}^2$. 
Moreover, by the assumption $(2-\gamma)\beta<1$ 
and by Lemma \ref{lem:pos} (3), we have 
$\dual{L_{(2-\gamma)\beta}w_2}{w_2}\ge c_3 \|w_2\|_{H^1}^2$. 
This completes the proof. 
\end{proof}

\begin{lemma}\label{Lr2}
If $1\le (2-\gamma)\beta<2$, then there exists a constant $\delta_2>0$ such that 
$\dual{\mathcal{L}_R \vec v}{\vec v}\ge \delta_2 \|\vec v\|_{X}^2$
for all $\vec v\in H^1_{\rm rad}(\R^N,\R)^2$ satisfying 
$(\vec v,\Phi)_{H}=0$ and $(\vec v,\Phi_1)_{H}=0$, where 
$\Phi_1=(-\beta \varphi_{\omega},\alpha \varphi_{\omega})$. 
\end{lemma}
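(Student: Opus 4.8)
The plan is to follow the same diagonalization strategy as in Lemma \ref{Lr1}, but this time exploiting the radial restriction to handle the now-negative direction of $L_2$. By Lemma \ref{diag}, writing $\vec w = A\vec v$ we have $\dual{\mathcal{L}_R\vec v}{\vec v} = \dual{L_2 w_1}{w_1} + \dual{L_{(2-\gamma)\beta} w_2}{w_2}$, and the orthogonality conditions translate exactly as before: $(w_1,\varphi_\omega)_{L^2} = (\vec v,\Phi)_H/\sqrt{\alpha^2+\beta^2} = 0$, and $(w_1, -\varphi_\omega \cdot(\text{something}))$... more precisely, since $\Phi_1 = (-\beta\varphi_\omega, \alpha\varphi_\omega)$, a short computation shows $(\vec v,\Phi_1)_H = \sqrt{\alpha^2+\beta^2}\,(w_2,\varphi_\omega)_{L^2}$, so the second constraint gives $(w_2,\varphi_\omega)_{L^2}=0$. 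Note also that $\vec v$ radial forces $w_1, w_2$ radial.

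The term $\dual{L_{(2-\gamma)\beta}w_2}{w_2}$ is the easy one: since $1 \le (2-\gamma)\beta < 2$, I would split into the case $(2-\gamma)\beta = 1$, handled by Lemma \ref{lem:pos}(2) together with $(w_2,\varphi_\omega)_{L^2}=0$, and the case $1 < (2-\gamma)\beta < 2$, handled by Lemma \ref{lem:pos}(4) with the same orthogonality; either way one gets $\dual{L_{(2-\gamma)\beta}w_2}{w_2} \ge c\,\|w_2\|_{H^1}^2$ for some $c>0$. The delicate term is $\dual{L_2 w_1}{w_1}$, where $w_1$ is radial and satisfies only $(w_1,\varphi_\omega)_{L^2}=0$ but \emph{not} $(w_1,\nabla\varphi_\omega)_{L^2}=0$. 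However, by Lemma \ref{lem:pos}(1), $\ker L_2$ is spanned by $\{\nabla\varphi_\omega\}$, and each component $\partial_j\varphi_\omega$ is an odd (non-radial) function; hence a radial function is automatically $L^2$-orthogonal to $\ker L_2$. Combined with $(w_1,\varphi_\omega)_{L^2}=0$, the radial $w_1$ lies in the spectral subspace where $L_2$ is positive definite, so $\dual{L_2 w_1}{w_1} \ge c_1'\,\|w_1\|_{H^1}^2$ for some $c_1'>0$.

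The main obstacle is this last coercivity estimate for $L_2$ on radial functions orthogonal to $\varphi_\omega$: one must justify that the negative eigenvalue of $L_2$ is killed by the single constraint $(w_1,\varphi_\omega)_{L^2}=0$ on the radial subspace, i.e. that the ground state eigenfunction of $L_2$ (the one with negative eigenvalue) is radial and not orthogonal to $\varphi_\omega$. Since $L_2$'s lowest eigenfunction is positive and radial, it indeed has nonzero $L^2$-pairing with $\varphi_\omega>0$, so restricting to $\{w : (w,\varphi_\omega)_{L^2}=0\}$ removes exactly that direction; the remaining radial directions avoid $\ker L_2$ (which is spanned by non-radial functions), giving a spectral gap. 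This is a standard argument — essentially the restriction of Lemma \ref{lem:pos}(1) to the radial sector, where one constraint suffices in place of the $1+N$ constraints needed in general — so I would either invoke it directly or spell it out in one line via the min–max characterization of the second eigenvalue of $L_2$ restricted to $H^1_{\mathrm{rad}}$. Finally, adding the two lower bounds and using $\|\vec w\|_{H^1} = \|\vec v\|_{H^1}$ (orthogonality of $A$) yields $\dual{\mathcal{L}_R\vec v}{\vec v} \ge \delta_2\|\vec v\|_X^2$ with $\delta_2 = \min\{c_1', c\}$, which completes the proof.
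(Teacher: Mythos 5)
Your proposal is correct and follows essentially the same route as the paper: diagonalize via $A$, note that $(w_1,\nabla\varphi_\omega)_{L^2}=0$ holds automatically by radial symmetry so that Lemma \ref{lem:pos}(1) applies to $w_1$, and use $(w_2,\varphi_\omega)_{L^2}=0$ with Lemma \ref{lem:pos}(2),(4) for the second component. The extra min--max discussion in your last paragraph is unnecessary, since once the orthogonality to $\nabla\varphi_\omega$ is observed the coercivity is exactly the statement of Lemma \ref{lem:pos}(1).
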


\begin{proof}
By Lemma \ref{diag}, we have 
$\dual{\mathcal{L}_R \vec v}{\vec v}=\dual{L_2 w_1}{w_1}
+\dual{L_{(2-\gamma)\beta}w_2}{w_2}$, 
where $\vec w=A\vec v$. 
Then we have $(w_1,\varphi_{\omega})_{L^2}
=(\vec v, \Phi)_{H}/\sqrt{\alpha^2+\beta^2}=0$. 
Moreover, since $\varphi_{\omega}$ and $w_1$ are radially symmetric, 
we have $(w_1,\nabla \varphi_{\omega})_{L^2}=0$. 
Thus, it follows from Lemma \ref{lem:pos} (1) that 
$\dual{L_2 w_1}{w_1}\ge c_1\|w_1\|_{H^1}^2$. 
Moreover, since $(w_2,\varphi_{\omega})_{L^2}
=(\vec v, \Phi_1)_{H}/\sqrt{\alpha^2+\beta^2}=0$, 
it follows from the assumption $1\le (2-\gamma)\beta<2$ 
and Lemma \ref{lem:pos} (2), (4) that 
$\dual{L_{(2-\gamma)\beta} w_2}{w_2}\ge c_2\|w_2\|_{H^1}^2$. 
\end{proof}

\begin{lemma}\label{Li1}
There exists a constant $\delta_3>0$ such that 
$\dual{\mathcal{L}_I \vec v}{\vec v}\ge \delta_3 \|\vec v\|_{X}^2$ 
for all $\vec v\in H^1(\R^N,\R)^2$ satisfying $(\vec v,\Phi_2)_{H}=0$, 
where $\Phi_2=(\alpha \varphi_{\omega},2\beta \varphi_{\omega})$. 
\end{lemma}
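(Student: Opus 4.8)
The plan is to reduce $\mathcal{L}_I$ to the scalar operators $L_1$ and $L_{(1-2\gamma)\beta}$ via the diagonalization in Lemma~\ref{diag}, and then to treat the two resulting scalar quadratic forms separately. Writing $\vec w=B\vec v$, Lemma~\ref{diag} gives $\dual{\mathcal{L}_I\vec v}{\vec v}=\dual{L_1 w_1}{w_1}+\dual{L_{(1-2\gamma)\beta}w_2}{w_2}$, and since $B$ is orthogonal it suffices to bound each term below by a multiple of $\|w_i\|_{H^1}^2$. The orthogonality condition $(\vec v,\Phi_2)_H=0$ translates, exactly as in the proofs of Lemmas~\ref{Lr1} and \ref{Lr2}, into $(w_1,\varphi_{\omega})_{L^2}=(\vec v,\Phi_2)_H/\sqrt{\alpha^2+4\beta^2}=0$, because $\Phi_2=(\alpha\varphi_{\omega},2\beta\varphi_{\omega})$ is precisely $\sqrt{\alpha^2+4\beta^2}\,\varphi_{\omega}$ times the first row of $B$. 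Hence Lemma~\ref{lem:pos}(2) applies directly to give $\dual{L_1 w_1}{w_1}\ge c_2\|w_1\|_{H^1}^2$.

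The second term is the one requiring an argument: I must show $\dual{L_{(1-2\gamma)\beta}w_2}{w_2}\ge c\,\|w_2\|_{H^1}^2$ for \emph{all} $w_2\in H^1(\R^N,\R)$, with no orthogonality constraint available (the single condition $(\vec v,\Phi_2)_H=0$ has been used up on $w_1$). By Lemma~\ref{lem:pos}(3), this holds provided $(1-2\gamma)\beta<1$. The key point is therefore to verify that the coefficient $a=(1-2\gamma)\beta$ always satisfies $a<1$ for the relevant parameters. Since $\beta>0$ and $\omega>0$, if $\gamma\ge 1/2$ then $a=(1-2\gamma)\beta\le 0<1$ trivially. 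If $0<\gamma<1/2$, then $a=(1-2\gamma)\beta>0$, and I need the bound $(1-2\gamma)\beta<1$; recalling from the discussion after Proposition~\ref{prop1} that $\mathcal{S}_{\kappa,\gamma}\subset\{(x,y):0<y<1\}$, so in particular $0<\beta<1$, we get $(1-2\gamma)\beta<1\cdot 1=1$. In fact even without invoking $\beta<1$ one can argue from the ellipse relation $\gamma\alpha^2+2\beta^2=2\beta$, which forces $\beta\le 1$, hence $(1-2\gamma)\beta\le\beta<1$ whenever $0<\gamma<1/2$. Either way, $L_{(1-2\gamma)\beta}$ is positive and coercive on all of $H^1(\R^N,\R)$.

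Combining the two estimates and using that $B$ is an isometry on $H^1(\R^N,\R)^2$, I obtain $\dual{\mathcal{L}_I\vec v}{\vec v}\ge \min\{c_2,c_3\}(\|w_1\|_{H^1}^2+\|w_2\|_{H^1}^2)=\min\{c_2,c_3\}\|\vec w\|_X^2=\delta_3\|\vec v\|_X^2$ with $\delta_3=\min\{c_2,c_3\}>0$, where $c_3$ is the coercivity constant for $L_{(1-2\gamma)\beta}$ from Lemma~\ref{lem:pos}(3). The main obstacle—such as it is—is the bookkeeping needed to confirm $(1-2\gamma)\beta<1$ in the regime $0<\gamma<1/2$; this is genuinely elementary once one remembers that $(\alpha,\beta)$ lies on the ellipse $\gamma x^2+2y^2=2y$ inside the strip $0<y<1$, so no hard analysis is involved. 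It is worth noting that, unlike Lemma~\ref{Lr2}, no radial-symmetry restriction is needed here, because $L_1$ and $L_{(1-2\gamma)\beta}$ are both coercive on the full space $H^1(\R^N,\R)$ modulo the single constraint $w_1\perp\varphi_{\omega}$, and there is no zero mode coming from translations to excise.
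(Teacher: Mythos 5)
Your proof is correct and follows essentially the same route as the paper's: diagonalize $\mathcal{L}_I$ via Lemma~\ref{diag}, convert $(\vec v,\Phi_2)_H=0$ into $(w_1,\varphi_\omega)_{L^2}=0$ so that Lemma~\ref{lem:pos}(2) handles $L_1w_1$, and use $(1-2\gamma)\beta<1$ with Lemma~\ref{lem:pos}(3) for the second component. The only difference is that you spell out why $(1-2\gamma)\beta<1$ (via $\beta<1$ from the ellipse when $0<\gamma<1/2$), which the paper asserts without comment.
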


\begin{proof}
By Lemma \ref{diag}, we have 
$\dual{\mathcal{L}_I \vec v}{\vec v}=\dual{L_1w_1}{w_1}
+\dual{L_{(1-2\gamma) \beta}w_2}{w_2}$, 
where $\vec w=B\vec v$. 
Since $(w_1,\varphi_{\omega})_{L^2}
=(\vec v,\Phi_2)_{H}/\sqrt{\alpha^2+4\beta^2}=0$, 
Lemma \ref{lem:pos} (2) implies 
$\dual{L_1 w_1}{w_1}\ge c_2\|w_1\|_{H^1}^2$. 
Moreover, since $(1-2\gamma) \beta<1$, 
it follows from Lemma \ref{lem:pos} (3) that 
$\dual{L_{(1-2\gamma) \beta}w_2}{w_2}\ge c_3\|w_2\|_{H^1}^2$. 
\end{proof}

The last two lemmas of this section make connections 
between parameters $(\kappa,\gamma)$ and the criteria used in 
Lemma \ref{Lr1}, \ref{Lr2} and \ref{Li1} on $\beta$. 

\begin{lemma}\label{lem:gb1}
Let $(\kappa,\gamma)\in \mathcal{J}_1\cup \mathcal{J}_2$. 
Then, $(2-\gamma)\beta_{-}<1$ and $(1-2\gamma)\beta_{-}<1$. 
\end{lemma}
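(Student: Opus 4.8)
The plan is to reduce everything to the explicit formula for $\beta_-$ and then to elementary inequalities in the two real parameters $\kappa,\gamma$, handled separately on $\mathcal{J}_1$ and $\mathcal{J}_2$. Recall
$$\beta_-=\frac{\kappa^2+\gamma^2-\kappa\sqrt{\kappa^2+2\gamma(\gamma-1)}}{2\kappa^2+\gamma^3},$$
which is well-defined precisely when $\kappa^2+2\gamma(\gamma-1)\ge 0$, and this holds throughout $\mathcal{J}_1\cup\mathcal{J}_2$ (on $\mathcal{J}_1$ because $\gamma\ge 1$, and on $\mathcal{J}_2$ by the defining inequality $\kappa>\sqrt{2\gamma(1-\gamma)}$). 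Also note $2\kappa^2+\gamma^3>0$ always, so the sign of any expression $c-(2-\gamma)\beta_-$ or $c-(1-2\gamma)\beta_-$ is governed by the sign of $(2\kappa^2+\gamma^3)c$ minus $(2-\gamma)$ (resp. $(1-2\gamma)$) times the numerator of $\beta_-$.

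First I would prove $(1-2\gamma)\beta_-<1$, which is the easier of the two. If $\gamma\ge 1/2$ then $1-2\gamma\le 0\le \beta_-$ (since $\beta_-\in\,]0,1[$ by Proposition~\ref{prop1} and the remark that $\mathcal{S}_{\kappa,\gamma}\subset\{0<y<1\}$), so $(1-2\gamma)\beta_-\le 0<1$. If $\gamma<1/2$ — which can only occur inside $\mathcal{J}_2$ — then $0<1-2\gamma<1$ and $0<\beta_-<1$, hence $(1-2\gamma)\beta_-<1$ trivially. So this claim follows immediately from $\beta_-\in\,]0,1[$.

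The substantive part is $(2-\gamma)\beta_-<1$. Here I would clear denominators: the inequality is equivalent to
$$(2-\gamma)\bigl(\kappa^2+\gamma^2-\kappa\sqrt{\kappa^2+2\gamma(\gamma-1)}\bigr)<2\kappa^2+\gamma^3,$$
i.e., to
$$(2-\gamma)\kappa^2+(2-\gamma)\gamma^2-2\kappa^2-\gamma^3<(2-\gamma)\kappa\sqrt{\kappa^2+2\gamma(\gamma-1)},$$
which simplifies to
$$-\gamma\kappa^2+2\gamma^2-2\gamma^3<(2-\gamma)\kappa\sqrt{\kappa^2+2\gamma(\gamma-1)},$$
that is, $\gamma\bigl(2\gamma(1-\gamma)-\kappa^2\bigr)<(2-\gamma)\kappa\sqrt{\kappa^2+2\gamma(\gamma-1)}$. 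On $\mathcal{J}_1$ one has $\kappa^2\ge 0\ge 2\gamma(1-\gamma)$ (as $\gamma\ge 1$), so the left side is $\le 0$; if the right side is $\ge 0$ we are done, and the only way the right side is negative is $\kappa<0$, in which case I would square (both sides nonpositive, inequality reverses) and check the resulting polynomial inequality reduces to something manifestly true. On $\mathcal{J}_2$ one has $\kappa>0$ so the right side is strictly positive, while $2\gamma(1-\gamma)-\kappa^2<0$ by the definition of $\mathcal{J}_2$, so the left side is strictly negative and the inequality is immediate.

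I expect the only mildly delicate case to be the $\kappa<0$ corner of $\mathcal{J}_1$, where both sides of the reduced inequality can be nonpositive and one must square carefully, tracking that $\kappa^2+2\gamma(\gamma-1)\ge 0$ and $\gamma\ge 1$; after squaring, the inequality should collapse to a product of manifestly nonnegative factors, and I would present that factorization explicitly rather than expanding blindly. Everything else is sign-chasing using only $\beta_-\in\,]0,1[$ and the defining inequalities of $\mathcal{J}_1,\mathcal{J}_2$, so the write-up can be kept short.
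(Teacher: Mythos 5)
Your reduction is algebraically sound up to the inequality $-\gamma D<(2-\gamma)\kappa\sqrt{D}$ with $D=\kappa^2+2\gamma(\gamma-1)$, and your treatment of $(1-2\gamma)\beta_-<1$ and of the region $\mathcal{J}_2$ is correct. But the case analysis on $\mathcal{J}_1$ has a genuine hole: you assert that the right-hand side $(2-\gamma)\kappa\sqrt{D}$ can only be negative when $\kappa<0$, forgetting that on $\mathcal{J}_1$ the factor $2-\gamma$ changes sign at $\gamma=2$. For $\kappa>0$ and $\gamma>2$ the right-hand side is negative as well, so this case is not covered by your ``right side $\ge 0$, done'' branch and needs the same squaring argument as the $\kappa<0$ corner. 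Moreover, that squaring argument is only promised, not carried out; for the record it does close (in every case where both sides are negative the inequality reduces to $\gamma^2D>(2-\gamma)^2\kappa^2$, and $\gamma^2D-(2-\gamma)^2\kappa^2=2(\gamma-1)(2\kappa^2+\gamma^3)>0$ since $\gamma>1$ in those cases), but as written the proof is incomplete.

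You are also working much harder than necessary on $\mathcal{J}_1$. The paper's proof avoids clearing denominators altogether: whenever $\gamma>1$ one has $2-\gamma<1$ and $0<\beta_-<1$, so $(2-\gamma)\beta_-<\beta_-<1$ immediately, exactly as in your own argument for $(1-2\gamma)\beta_-$. The only case needing the explicit formula is $0<\gamma\le 1$, which on $\mathcal{J}_1\cup\mathcal{J}_2$ forces $\kappa>0$ and $D>0$; there one bounds $\beta_-<(\kappa^2+\gamma^2)/(2\kappa^2+\gamma^3)$ by dropping the term $-\kappa\sqrt{D}<0$, and the inequality $(2-\gamma)(\kappa^2+\gamma^2)/(2\kappa^2+\gamma^3)<1$ is precisely equivalent to $D>0$. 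Restructuring your argument along those lines eliminates both the missed case and the squaring.
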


\begin{proof}
We put $D=\kappa^2+2\gamma (\gamma-1)$. 
By the second equation of \eqref{eq:ab}, we have $0<\beta_{-}<1$. 
Thus, we have $(1-2\gamma)\beta_{-}<\beta_{-}<1$. 
If $\gamma>1$, then $(2-\gamma)\beta_{-}<\beta_{-}<1$. 
While, if $0<\gamma\le 1$, then $\kappa>0$, $D>0$ and $(2-\gamma)\beta_{-}
<(2-\gamma)(\kappa^2+\gamma^2)/(2\kappa^2+\gamma^3)<1$. 
Note that the last inequality is equivalent to $D>0$. 
\end{proof}

\begin{lemma}\label{lem:gb2}
Let $(\kappa,\gamma)\in \mathcal{J}_2$. 
Then, $1<(2-\gamma)\beta_{+}<2$ and $(1-2\gamma)\beta_{+}<1$. 
\end{lemma}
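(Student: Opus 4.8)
The plan is to reduce each of the three claimed bounds on $\beta_+$ to an elementary inequality, using that $(\alpha_-,\beta_+)\in\mathcal{S}_{\kappa,\gamma}$ (Proposition \ref{prop1}(2)) together with $D:=\kappa^2+2\gamma(\gamma-1)>0$, which holds throughout $\mathcal{J}_2$ since there $\kappa>\sqrt{2\gamma(1-\gamma)}>0$ and $0<\gamma<1$.

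First the two ``easy'' bounds. Since $(\alpha_-,\beta_+)$ satisfies the second equation of \eqref{eq:ab} with $\alpha_->0$, we have $2\beta_+(1-\beta_+)=\gamma\alpha_-^2>0$, hence $0<\beta_+<1$ (exactly as in Lemma \ref{lem:gb1}). This at once gives $(1-2\gamma)\beta_+=\beta_+-2\gamma\beta_+<1-2\gamma\beta_+<1$, using $\gamma>0$ and $\beta_+>0$; and it gives $(2-\gamma)\beta_+<2$ because $2-\gamma<2$ forces $2/(2-\gamma)>1>\beta_+$.

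The real content is the lower bound $(2-\gamma)\beta_+>1$. Here I would insert the explicit formula for $\beta_+$, clear the positive denominator $2\kappa^2+\gamma^3$, and simplify. Using $(2-\gamma)\gamma^2-\gamma^3=2\gamma^2(1-\gamma)$ and $(2-\gamma)\kappa^2-2\kappa^2=-\gamma\kappa^2$, the inequality $(2-\gamma)(\kappa^2+\gamma^2+\kappa\sqrt{D})>2\kappa^2+\gamma^3$ becomes, after collecting terms, $(2-\gamma)\kappa\sqrt{D}>\gamma D$. Since $D>0$ on $\mathcal{J}_2$, I divide by $\sqrt{D}$ to obtain the equivalent $(2-\gamma)\kappa>\gamma\sqrt{D}$; both sides are positive (using $\kappa>0$ and $0<\gamma<1$), so squaring is an equivalence and gives $(2-\gamma)^2\kappa^2>\gamma^2 D=\gamma^2\kappa^2-2\gamma^3(1-\gamma)$, i.e. $[(2-\gamma)^2-\gamma^2]\kappa^2>-2\gamma^3(1-\gamma)$. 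Finally $(2-\gamma)^2-\gamma^2=4(1-\gamma)$, and dividing by $1-\gamma>0$ leaves $4\kappa^2+2\gamma^3>0$, which always holds. Reading the chain backwards proves $(2-\gamma)\beta_+>1$.

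There is no serious obstacle: the argument is essentially bookkeeping. The only point requiring care is the order of the reductions — one must record that $D>0$ and $2-\gamma>0$ on $\mathcal{J}_2$ before dividing by $\sqrt{D}$ and before squaring, so that every step is a genuine equivalence rather than a one-way implication. (If one prefers not to invoke Proposition \ref{prop1}(2) for $0<\beta_+<1$, the same bound can be read off directly from the closed form of $\beta_+$, but appealing to \eqref{eq:ab} as in Lemma \ref{lem:gb1} is cleanest.)
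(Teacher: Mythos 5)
Your proof is correct and follows essentially the same route as the paper: the easy bounds come from $0<\beta_+<1$, and the lower bound $(2-\gamma)\beta_+>1$ is reduced to $(2-\gamma)\kappa>\gamma\sqrt{D}$. The only (cosmetic) difference is that the paper finishes by observing $\gamma\sqrt{D}<\gamma\kappa<(2-\gamma)\kappa$ (since $D<\kappa^2$ on $\mathcal{J}_2$), whereas you square both sides; both are valid.
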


\begin{proof}
We put $D=\kappa^2+2\gamma (\gamma-1)$. 
Since $0<\beta_{+}<1$, we have 
$(2-\gamma)\beta_{+}<2\beta_{+}<2$ and $(1-2\gamma)\beta_{+}<\beta_{+}<1$. 
Next, we see that $(2-\gamma)\beta_{+}>1$ is equivalent to 
$(2-\gamma) \kappa>\gamma \sqrt{D}$. 
Since $0<\gamma<1$ and $\kappa>0$, we have 
$\gamma \sqrt{D}<\gamma \kappa<(2-\gamma) \kappa$. 
\end{proof}

\begin{remark}
When $(\kappa,\gamma)\in \mathcal{J}_3$, 
we have $D=\kappa^2+2\gamma (\gamma-1)=0$, 
$(2-\gamma)\beta_{0}=1$ and $(1-2\gamma)\beta_{0}<1$. 
\end{remark}

\section{Proofs of Theorems \ref{thm2} and \ref{thm3}} \label{sect3}

In this section we prove Theorems \ref{thm2} and \ref{thm3} 
using the results of Section \ref{sect2} and the following propositions. 
Proposition \ref{prop2} follows from Theorem 3.4 
of Grillakis, Shatah and Strauss \cite{GSS1}
(see also \cite{wei2} and \cite[Section 3]{CCO1}). 
While, Proposition \ref{prop3} follows from Theorem 1 
of \cite{oht} (see also \cite{GSS1,mae,SS}). 

\begin{proposition}\label{prop2}
Let $\vec \phi\in \mathcal{A}_{\omega}$. 
Assume that there exists a constant $\delta>0$ such that 
$\dual{S_{\omega}''(\vec \phi)\vec w}{\vec w}\ge \delta \|\vec w\|_{X}^2$ 
for all $\vec w\in X$ satisfying 
$(\vec \phi,\vec w)_{H}=(J\vec \phi,\vec w)_{H}=0$ 
and $(\nabla \vec \phi,\vec w)_{H}=0$. 
Then the standing wave solution $G(\omega t)\vec \phi$ 
of \eqref{eq:co} is stable. 
\end{proposition}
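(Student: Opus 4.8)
The plan is to run the classical Lyapunov functional argument of Grillakis--Shatah--Strauss, in the form used by Weinstein and Shatah. The ingredients are: $\vec\phi$ is a critical point of the action $S_\omega=E+\omega Q$; the energy $E$ and the charge $Q$ are conserved along the (globally defined) flow of \eqref{eq:co}; and $\dual{S_\omega''(\vec\phi)\,\cdot\,}{\,\cdot\,}$ is coercive off the $N+2$ ``bad'' directions $\vec\phi$, $J\vec\phi$, $\partial_{x_1}\vec\phi,\dots,\partial_{x_N}\vec\phi$. Of these, $\vec\phi=Q'(\vec\phi)$ (identifying $X^*$ with $X$ through $(\cdot,\cdot)_H$) is the gradient of the charge, while $J\vec\phi$ and $\nabla\vec\phi$ are the infinitesimal generators of the symmetry group $\{G(\theta)\tau_y\}$. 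The strategy is to neutralize the latter $N+1$ directions by modulating in $(\theta,y)$ and the $\vec\phi$-direction by using conservation of $Q$.

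\textbf{Modulation.} For $\vec u$ in a fixed small tube around the orbit $\{G(\theta)\tau_y\vec\phi:\theta\in\R,\ y\in\R^N\}$, I would apply the implicit function theorem to produce $C^1$ maps $\vec u\mapsto(\theta(\vec u),y(\vec u))$ such that, with $\vec w:=G(-\theta(\vec u))\tau_{-y(\vec u)}\vec u-\vec\phi$, one has $(\vec w,J\vec\phi)_H=0$ and $(\vec w,\partial_{x_j}\vec\phi)_H=0$ for $j=1,\dots,N$. The Jacobian of the defining system at $\vec u=\vec\phi$, $(\theta,y)=(0,0)$, is diagonal with nonzero entries $\|J\vec\phi\|_H^2$ and $\|\partial_{x_j}\vec\phi\|_H^2$, so the IFT applies uniformly on the tube and, by equivariance of the modulation, $\|\vec w\|_X\le C\inf_{\theta,y}\|\vec u-G(\theta)\tau_y\vec\phi\|_X$. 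Since $(\cdot,\cdot)_H$, $E$, $Q$ (hence $S_\omega$) are invariant under $G(\theta)\tau_y$, it suffices to control $\|\vec w\|_X$.

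\textbf{Coercivity and conclusion.} Since $S_\omega'(\vec\phi)=0$, a second-order Taylor expansion gives
\[
S_\omega(\vec u)=S_\omega(\vec\phi+\vec w)=S_\omega(\vec\phi)+\tfrac12\dual{S_\omega''(\vec\phi)\vec w}{\vec w}+o(\|\vec w\|_X^2).
\]
Write $\vec w=a\vec\phi+\vec z$ with $(\vec z,\vec\phi)_H=0$. Because $(\vec\phi,J\vec\phi)_H=0$ and, writing $\vec\phi=(\phi_1,\phi_2)$, $(\vec\phi,\partial_{x_j}\vec\phi)_H=\tfrac12\sum_{k=1}^{2}\int_{\R^N}\partial_{x_j}|\phi_k|^2\,dx=0$, the remainder $\vec z$ still satisfies all three orthogonality relations of the hypothesis, whence $\dual{S_\omega''(\vec\phi)\vec z}{\vec z}\ge\delta\|\vec z\|_X^2$. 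Conservation of the charge yields $a\|\vec\phi\|_H^2=Q(\vec u)-Q(\vec\phi)-\tfrac12\|\vec w\|_H^2$, hence $|a|\le C\big(|Q(\vec u)-Q(\vec\phi)|+\|\vec w\|_X^2\big)$. Expanding $\dual{S_\omega''(\vec\phi)\vec w}{\vec w}$, using boundedness of this quadratic form on $X$ to absorb the terms $2a\dual{S_\omega''(\vec\phi)\vec\phi}{\vec z}$ and $a^2\dual{S_\omega''(\vec\phi)\vec\phi}{\vec\phi}$, and using $\|\vec z\|_X^2\ge\tfrac12\|\vec w\|_X^2-Ca^2$, one obtains for $\vec u$ close enough to the orbit
\[
\|\vec w\|_X^2\le C\big(|S_\omega(\vec u)-S_\omega(\vec\phi)|+|Q(\vec u)-Q(\vec\phi)|\big).
\]
Applying this to a solution $\vec u(t)$ with $\vec u(0)=\vec u_0$ close to $\vec\phi$: by continuity of $S_\omega$ and $Q$ on $X$ the right-hand side at $t=0$ is small, and by conservation of $E$ and $Q$ it keeps that value for all $t$ for which $\vec u(t)$ stays in the tube. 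Since $\inf_{\theta,y}\|\vec u(t)-G(\theta)\tau_y\vec\phi\|_X\le\|\vec w(t)\|_X$, a standard continuity/bootstrap argument (the trajectory is continuous into $X$ and hence cannot reach the boundary of the tube without first violating the bound) shows that, for $\delta$ small enough in the definition of stability, $\vec u(t)$ never leaves the tube and $\inf_{\theta,y}\|\vec u(t)-G(\theta)\tau_y\vec\phi\|_X<\varepsilon$ for all $t\ge0$.

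\textbf{Main obstacle.} The delicate point is the bookkeeping in the coercivity step: modulation gives \emph{exact} $H$-orthogonality only to $J\vec\phi$ and $\nabla\vec\phi$, whereas orthogonality to $\vec\phi$ itself holds only modulo an error quadratic in $\|\vec w\|_X$ (plus the charge defect), so one must verify that splitting off $a\vec\phi$ and absorbing all the resulting cross terms does not spoil the coercivity constant $\delta$. The only other genuinely technical ingredient is the uniform-in-the-tube implicit function theorem setting up the modulation with $C^1$ dependence on $\vec u$, which is routine but needs the well-posedness of \eqref{eq:co} in $X$ to be invoked cleanly.
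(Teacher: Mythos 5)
Your proposal is correct and follows essentially the same route as the paper, which proves Proposition~\ref{prop2} simply by invoking Theorem~3.4 of Grillakis--Shatah--Strauss (and Weinstein): namely the Lyapunov functional $S_{\omega}=E+\omega Q$, modulation along the symmetry orbit to enforce orthogonality to $J\vec\phi$ and $\nabla\vec\phi$, control of the $\vec\phi$-component via conservation of $Q$, and coercivity of $S_{\omega}''(\vec\phi)$ on the remaining codimension-$(N+2)$ subspace, closed by a continuity argument. The bookkeeping you flag (the $a\vec\phi$ splitting and absorption of cross terms) works exactly as you describe and is the standard content of the cited theorem.
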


\begin{proposition}\label{prop3}
Let $\vec \phi\in \mathcal{A}_{\omega}$ be radially symmetric. 
Assume that there exist $\vec \psi\in X_{\rm rad}$ 
and a constant $\delta>0$ such that $\|\vec \psi\|_{H}=1$, 
$(\vec \psi,\vec \phi)_{H}=(\vec \psi,J\vec \phi)_{H}=0$, 
$\dual{S_{\omega}''(\vec \phi)\vec \psi}{\vec \psi}<0$, and 
$\dual{S_{\omega}''(\vec \phi)\vec w}{\vec w}\ge \delta \|\vec w\|_{X}^2$ 
for all $\vec w\in  X_{\rm rad}$ satisfying 
$(\vec \phi,\vec w)_{H}=(J\vec \phi,\vec w)_{H}=(\vec \psi,\vec w)_{H}=0$. 
Then the standing wave solution $G(\omega t)\vec \phi$ 
of \eqref{eq:co} is unstable. 
\end{proposition}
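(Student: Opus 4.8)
\emph{Proof proposal.}\hspace{2mm}
This is the abstract conditional instability statement, so my first move is to deduce it directly from Theorem~1 of \cite{oht} (cf. also \cite{GSS1,SS,mae}): that theorem asks for a radial bound state $\vec\phi$, one direction $\vec\psi$ with $\dual{S_\omega''(\vec\phi)\vec\psi}{\vec\psi}<0$ that is $H$-orthogonal to the infinitesimal generators $\vec\phi$ and $J\vec\phi$ of the symmetries acting on radial functions, and coercivity of $S_\omega''(\vec\phi)$ on the $H$-orthogonal complement of $\mathrm{span}\{\vec\phi,J\vec\phi,\vec\psi\}$ inside $X_{\rm rad}$ --- which are exactly our hypotheses, so there is nothing to verify. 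For completeness I would recall the mechanism. One works in the flow-invariant space $X_{\rm rad}$; a radial function is automatically $L^2$-orthogonal to $\nabla\vec\phi$, and if it lies within $\varepsilon$ of a translate $\tau_y\vec\phi$ then necessarily $|y|\lesssim\varepsilon$, so inside a small tube translations are harmless and it suffices to study escape from the circle $\mathcal{O}=\{G(\theta)\vec\phi:\theta\in\R\}$. By the implicit function theorem there is $\varepsilon_0>0$ so that every radial $\vec u$ with $\mathrm{dist}_X(\vec u,\mathcal{O})<\varepsilon_0$ is uniquely written $\vec u=G(\theta(\vec u))(\vec\phi+\vec v(\vec u))$ with $(\vec v(\vec u),J\vec\phi)_H=0$, with $(\theta,\vec v)$ of class $C^1$ in $\vec u$ and $\|\vec v(\vec u)\|_X\lesssim\varepsilon_0$.

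Next I replace $\vec\psi$ by the genuine normalized negative eigenfunction $\vec\psi_0$ of $S_\omega''(\vec\phi)$ restricted to $\{\vec\phi,J\vec\phi\}^{\perp_H}\cap X_{\rm rad}$; since $S_\omega''(\vec\phi)$ is the identity minus a compact operator on $X$, a min--max argument shows, from the assumed single negative direction plus coercivity, that $\vec\psi_0$ exists and is unique, that $\dual{S_\omega''(\vec\phi)\vec\psi_0}{\vec w}=0$ for every $\vec w$ in the coercive subspace, that $\dual{S_\omega''(\vec\phi)\vec\psi_0}{\vec\psi_0}=-\mu_0<0$, and that coercivity persists on $\{\vec\phi,J\vec\phi,\vec\psi_0\}^{\perp_H}\cap X_{\rm rad}$. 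Using $S_\omega'(\vec\phi)=0$ and $(\vec\psi_0,\vec\phi)_H=0$ one has $Q(\vec\phi+s\vec\psi_0)=Q(\vec\phi)+O(s^2)$, so the rescaled radial datum $\vec u_0^{(s)}=\lambda_s(\vec\phi+s\vec\psi_0)$, $\lambda_s=1+O(s^2)$, satisfies $Q(\vec u_0^{(s)})=Q(\vec\phi)$, $\vec u_0^{(s)}\to\vec\phi$ in $X$ as $s\to0$, and $S_\omega(\vec u_0^{(s)})=S_\omega(\vec\phi)-\tfrac{\mu_0}{2}s^2+o(s^2)<S_\omega(\vec\phi)$. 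By conservation of $E$ and $Q$ and global well-posedness in $X$, the solution $\vec u(t)$ with $\vec u(0)=\vec u_0^{(s)}$ is global, radial, and keeps $S_\omega(\vec u(t))\equiv S_\omega(\vec\phi)-\eta_s$ with $\eta_s>0$ fixed. Assume for contradiction that $\vec u(t)$ stays in the $\varepsilon_0$-tube for all $t\ge0$, and write $\vec v(t)=\vec v(\vec u(t))=a(t)\vec\psi_0+\vec r(t)$ with $(\vec r(t),\vec\psi_0)_H=0$; the constraint $Q(\vec u(t))=Q(\vec\phi)$ forces $(\vec v(t),\vec\phi)_H=O(\|\vec v(t)\|_X^2)$, so $\vec r(t)$ lies in the coercive subspace up to an $O(\|\vec v(t)\|_X^2)$ error. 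Expanding $S_\omega(\vec u(t))=S_\omega(\vec\phi)-\tfrac{\mu_0}{2}a(t)^2+\tfrac12\dual{S_\omega''(\vec\phi)\vec r(t)}{\vec r(t)}+o(\|\vec v(t)\|_X^2)$ and using coercivity, one gets simultaneously $\|\vec v(t)\|_X\lesssim|a(t)|$ and $|a(t)|\ge c\sqrt{\eta_s}>0$ for all $t$, with the sign of $a(t)$ constant in $t$ by continuity, provided $s$ is small enough.

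The contradiction comes from the functional $\mathcal{P}(\vec u)=(\vec v(\vec u),J^{-1}\vec\psi_0)_H$, which is $C^1$ on the tube, bounded there by some $M$ (since $\|\vec v(\vec u)\|_X\lesssim\varepsilon_0$), and vanishes on $\mathcal{O}$. Differentiating along the flow and using $\partial_t\vec v=-\dot\theta\,J(\vec\phi+\vec v)-J\,E'(\vec\phi+\vec v)$ with $\dot\theta=\omega+O(\|\vec v\|_X)$, the skew-adjointness of $J$, the self-adjointness of $S_\omega''(\vec\phi)$, and the relations $(\vec\phi,\vec\psi_0)_H=0$, $\dual{S_\omega''(\vec\phi)\vec\psi_0}{\vec r}=0$, one finds $\tfrac{d}{dt}\mathcal{P}(\vec u(t))=\dual{S_\omega''(\vec\phi)\vec\psi_0}{\vec\psi_0}\,a(t)+o(\|\vec v(t)\|_X)=-\mu_0\,a(t)+o(|a(t)|)$. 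Since $c\sqrt{\eta_s}\le|a(t)|\le C\varepsilon_0$, for $\varepsilon_0$ small enough the error is dominated, so $\tfrac{d}{dt}\mathcal{P}(\vec u(t))$ has a sign independent of $t$ with $|\tfrac{d}{dt}\mathcal{P}(\vec u(t))|\ge\tfrac{\mu_0}{2}c\sqrt{\eta_s}>0$ for all $t\ge0$; hence $|\mathcal{P}(\vec u(t))|\to\infty$, contradicting $|\mathcal{P}|\le M$ on the tube. Therefore $\vec u(t)$ must leave the $\varepsilon_0$-tube in finite time, and since $\vec u_0^{(s)}\to\vec\phi$ as $s\to0$ this shows $G(\omega t)\vec\phi$ is unstable.

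The step I expect to be the real work is the error bookkeeping: passing from the given $\vec\psi$ to the eigenfunction $\vec\psi_0$ while preserving both the sign condition and the $X$-coercivity; deriving $\dot\theta=\omega+O(\|\vec v\|_X)$ from the modulation equation and checking that, in the full tube, the translation modulation $\dot y$ enters $\tfrac{d}{dt}\mathcal{P}$ only at lower order; and verifying that in $\tfrac{d}{dt}\mathcal{P}$ the cross term $\dual{S_\omega''(\vec\phi)\vec r}{\vec\psi_0}$ (which vanishes by purification), the term $(\dot\theta-\omega)a(t)$, and the Taylor remainders of $S_\omega$ and $S_\omega'$ at $\vec\phi$ are all negligible against the leading term $\mu_0 a(t)$ once $\varepsilon_0$ is small. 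All of this is precisely what is packaged in \cite[Theorem~1]{oht}, so in practice I would simply invoke that result.
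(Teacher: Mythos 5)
Your proposal is correct and takes essentially the same route as the paper: the paper's entire ``proof'' of Proposition~\ref{prop3} is the citation of Theorem~1 of \cite{oht} (with \cite{GSS1,mae,SS} as background), which is exactly your first and last move. Your sketched mechanism (modulation along the orbit, the auxiliary functional $(\vec v(\vec u),J^{-1}\vec\psi)_H$ whose time derivative is pinned away from zero by the negative direction, and the energy/charge bookkeeping) is the same machinery the paper spells out in detail only for the degenerate analogue, Proposition~\ref{prop4}, in Section~\ref{sect4}.
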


\begin{proof}[Proof of Theorem \ref{thm2}]
For $(\kappa,\gamma)\in \mathcal{J}_1\cup\mathcal{J}_2$, 
let $(\alpha,\beta)=(\alpha_{+},\beta_{-})$. Let $\vec w\in X$ satisfy 
$(\Phi,\vec w)_{H}=(J\Phi,\vec w)_{H}=0$ and $(\nabla \Phi,\vec w)_{H}=0$.  
By \eqref{slri}, we have 
$$\dual{S_{\omega}''(\Phi)\vec w}{\vec w}
=\dual{\mathcal{L}_R \Re \vec w}{\Re \vec w}
+\dual{\mathcal{L}_I \Im \vec w}{\Im \vec w}.$$
Since $(\Phi,\Re \vec w)_{H}=(\Phi,\vec w)_{H}=0$ and 
$(\nabla \Phi,\Re \vec w)_{H}=(\nabla \Phi,\vec w)_{H}=0$, 
it follows from Lemmas \ref{lem:gb1} and \ref{Lr1} that 
$\dual{\mathcal{L}_R \Re \vec w}{\Re \vec w}\ge \delta_1\|\Re \vec w\|_{X}^2$. 
While, since $(\Im \vec w,\Phi_2)_{H}=(J\Phi,\vec w)_{H}=0$, 
Lemma \ref{Li1} implies $\dual{\mathcal{L}_I \Im \vec w}{\Im \vec w}
\ge \delta_3\|\Im \vec w\|_{X}^2$. 
Therefore, Theorem \ref{thm2} follows from Proposition \ref{prop2}. 
\end{proof}

\begin{proof}[Proof of Theorem \ref{thm3}]
For $(\kappa,\gamma)\in \mathcal{J}_2$, let $(\alpha,\beta)=(\alpha_{-},\beta_{+})$. 
We take $\vec \psi=\Phi_1/\|\Phi_1\|_{H}$. 
Then we have $\|\vec \psi\|_{H}=1$, $(\vec \psi,\Phi)_{H}=0$ 
and $(\vec \psi,J\Phi)_{H}=0$. 
Moreover, by Lemma \ref{lem:gb2} and Lemma \ref{lem:pos} (4), we have 
$$\dual{S_{\omega}''(\Phi)\vec \psi}{\vec \psi}
=\dual{\mathcal{L}_R \vec \psi}{\vec \psi}
=\dual{L_{(2-\gamma)\beta} \varphi_{\omega}}{\varphi_{\omega}}
/{\|\varphi_{\omega}\|_{L^2}^2}<0.$$
Finally, let $\vec w\in  X_{\rm rad}$ satisfy 
$(\Phi,\vec w)_{H}=(J\Phi,\vec w)_{H}=(\vec \psi,\vec w)_{H}=0$. 
Since $(\Phi,\Re \vec w)_{H}=(\Phi,\vec w)_{H}=0$ and 
$(\Phi_1,\Re \vec w)_{H}=(\Phi_1,\vec w)_{H}=0$, 
by Lemmas \ref{lem:gb2} and \ref{Lr2}, we have 
$\dual{\mathcal{L}_R \Re \vec w}{\Re \vec w}\ge \delta_2\|\Re \vec w\|_{X}^2$. 
While, since $(\Im \vec w,\Phi_2)_{H}=(J\Phi,\vec w)_{H}=0$, 
by Lemma \ref{Li1}, we have 
$\dual{\mathcal{L}_I \Im \vec w}{\Im \vec w}\ge \delta_3\|\Im \vec w\|_{X}^2$. 
Thus, by \eqref{slri}, we have 
$\dual{S_{\omega}''(\vec \phi)\vec w}{\vec w}\ge \delta \|\vec w\|_{X}^2$, 
and Theorem \ref{thm3} follows from Proposition \ref{prop3}. 
\end{proof}

\section{Proof of Theorem \ref{thm4}} \label{sect4}

We introduce the following Proposition \ref{prop4} to prove Theorem \ref{thm4}. 
It is a modification of Theorem 2 of \cite{oht}. 
In what follows, $\sgn(\mu)$ denotes the sign of any real $\mu$. 

\begin{proposition}\label{prop4}
Let $\vec \phi\in \mathcal{A}_{\omega}$ be radially symmetric. 
Assume that there exist $\vec \psi\in X_{\rm rad}$ such that 
\par \noindent $({\rm i})$ \hspace{1mm} 
$\|\vec \psi\|_{H}=1$, $(\vec \psi,\vec \phi)_{H}=0$, 
$(\vec \psi,J\vec \phi)_{H}=(\vec \psi,J\vec \phi)_{X}=0$, 
$S_{\omega}''(\vec \phi)\vec \psi=\vec 0$, 
\par \noindent $({\rm ii})$ \hspace{1mm} 
there exists a positive constant $k_0$ such that 
$\dual{S_{\omega}''(\vec \phi)\vec w}{\vec w}\ge k_0\|\vec w\|_{X}^2$ 
\par \hspace{2mm}
for all $\vec w\in  X_{\rm rad}$ satisfying 
$(\vec \phi,\vec w)_{H}=(J\vec \phi,\vec w)_{H}=
(\vec \psi,\vec w)_{H}=0$,
\par \noindent $({\rm iii})$ \hspace{1mm} 
there exist positive constants $k_1$, $k_2$ and $\varepsilon$ such that 
$$\sgn (\lambda) \cdot 
\dual{S_{\omega}'(\vec \phi+\lambda \vec \psi+\vec z)}{\vec \psi}
\le -k_1 \lambda^2+k_2 \|\vec z\|_{X}^2+o(\lambda^2+\|\vec z\|_{X}^2)$$
\par \hspace{1mm}
for all $\lambda\in \R$ and $\vec z\in X_{\rm rad}$ 
satisfying $|\lambda|+\|\vec z\|_{X}<\varepsilon$. 
\par \noindent 
Then the standing wave solution $G(\omega t)\vec \phi$ of \eqref{eq:co} is unstable. 
\end{proposition}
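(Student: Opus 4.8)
The plan is to adapt the abstract instability criterion of Grillakis–Shatah–Strauss type, following the argument of Shatah–Strauss and Ohta \cite{oht}, to the degenerate case in which the kernel of $S_\omega''(\vec\phi)$ contains an extra direction $\vec\psi$ coming not from a symmetry but from the crossing of eigenvalues at the bifurcation point. The key object is a Lyapunov functional of the form
$$
\mathcal{I}(\vec u)=\dual{S_\omega'(\vec u)}{A(\vec u)}
$$
where $A$ is a $C^1$ vector field on a tubular neighbourhood of the orbit $\{G(\theta)\tau_y\vec\phi\}$, invariant (or equivariant) under the symmetry group $G(\theta)$, and constructed so that at $\vec\phi$ one has $A(\vec\phi)=\vec\psi$ while $A$ stays orthogonal to $\vec\phi$ and to $J\vec\phi$. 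The quantity $\mathcal{I}$ should be conserved (or have a sign) along the flow of \eqref{eq:co} in a way that contradicts stability: if the solution starting near $\vec\phi$ stayed near the orbit, then modulating out the phase and translation parameters one obtains a trajectory $\vec u(t)=G(\theta(t))\tau_{y(t)}(\vec\phi+\lambda(t)\vec\psi+\vec z(t))$ with $\vec z\perp\{\vec\phi,J\vec\phi,\vec\psi\}$ in $H$, small in $X$ for all time, and one derives a differential inequality for $\lambda(t)$ that forces escape.

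First I would set up the modulation: by the implicit function theorem, using hypothesis (i) (the orthogonality $(\vec\psi,\vec\phi)_H=(\vec\psi,J\vec\phi)_H=0$ together with the standing-wave orthogonality relations), decompose any $\vec u$ close to the orbit uniquely as $G(\theta)\tau_y(\vec\phi+\lambda\vec\psi+\vec z)$ with $\vec z$ in the orthogonal complement of $\mathrm{span}\{\vec\phi,J\vec\phi,\vec\psi\}$ in $H$, and with $\lambda,\vec z,\dot\theta,\dot y$ controlled by $\mathrm{dist}$. Second, I would use the conservation of $E$ and $Q$: on the constraint manifold $Q(\vec u)=Q(\vec\phi)$, the energy difference $E(\vec u)-E(\vec\phi)$ expands, via $S_\omega=E+\omega Q$ and $S_\omega'(\vec\phi)=0$, as
$$
E(\vec u)-E(\vec\phi)=\tfrac12\dual{S_\omega''(\vec\phi)(\lambda\vec\psi+\vec z)}{\lambda\vec\psi+\vec z}+o(\lambda^2+\|\vec z\|_X^2)
=\tfrac12\dual{S_\omega''(\vec\phi)\vec z}{\vec z}+o(\cdots),
$$
the $\lambda$–term vanishing because $\vec\psi\in\ker S_\omega''(\vec\phi)$; by hypothesis (ii) this controls $\|\vec z\|_X^2\lesssim |E(\vec u_0)-E(\vec\phi)|+o(\lambda^2)$, so $\|\vec z\|_X$ is quadratically small in the initial data plus $\lambda$. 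Third, I would track $\lambda(t)$ through $\mathcal{I}(\vec u(t))=\dual{S_\omega'(\vec u)}{\vec\psi}$ (pulled back by the symmetries, so the $\dot\theta,\dot y$ contributions are handled by $(\vec\psi,J\vec\phi)_X=0$ and radiality), compute $\frac{d}{dt}\mathcal{I}$ using $\partial_t\vec u=-JS_\omega'(\vec u)$, and feed in hypothesis (iii): along the flow $\sgn(\lambda)\,\mathcal{I}(\vec u)\le -k_1\lambda^2+k_2\|\vec z\|_X^2+o(\cdots)\le -\tfrac{k_1}{2}\lambda^2+o(\lambda^2)+C|E(\vec u_0)-E(\vec\phi)|$, which is strictly negative once $\lambda$ is bounded below and the data is close enough. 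Combined with a bound $|\mathcal{I}(\vec u)|\le C\,\mathrm{dist}(\vec u,\text{orbit})$, this yields the classical ODE argument: $\lambda(t)$ cannot remain small, so for suitable arbitrarily-small initial perturbations the solution leaves an $\varepsilon$–neighbourhood of the orbit, proving instability.

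**The hard part will be** the construction of the conserved (or monotone) auxiliary functional and the verification that the modulation decomposition is compatible with it — specifically, ensuring that the terms generated by differentiating the phase and translation parameters do not spoil the sign in the differential inequality. This is exactly where the extra conditions $(\vec\psi,J\vec\phi)_X=0$ and the radial restriction in (i)–(iii) are used: radiality kills the translation modulation (since $\vec\phi,\vec\psi$ are radial, $\nabla\vec\phi$ is orthogonal to the radial sector), and $(\vec\psi,J\vec\phi)_X=0$ is precisely what makes the phase-modulation term orthogonal to $\vec\psi$ in the relevant pairing. One must also be careful that hypothesis (iii) is a statement about $S_\omega'$ evaluated at the \emph{full} perturbed point $\vec\phi+\lambda\vec\psi+\vec z$, not just its quadratic part, so the proof should avoid Taylor-expanding prematurely and instead use (iii) as a black box after the modulation and the energy-constraint step have reduced everything to the triple $(\lambda,\vec z,\text{data})$. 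Since the statement is explicitly flagged as "a modification of Theorem 2 of \cite{oht}," I expect the cleanest route is to mirror that proof line by line, pointing out that the only change is the replacement of the symmetry-generated kernel direction by $\vec\psi$, which is legitimate precisely because (i) supplies all the orthogonality relations that the symmetry direction would otherwise satisfy automatically.
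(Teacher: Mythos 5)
Your outline follows the same route as the paper: phase modulation via the implicit function theorem (translation is not modulated at all here, since everything is posed in $X_{\rm rad}$), a Taylor expansion of $S_{\omega}$ on the charge sphere using (ii) to control the component $\vec w$ orthogonal to $\{\vec\phi, J\vec\phi,\vec\psi\}$, and assumption (iii) to give the virial-type quantity $\mathcal{P}(\vec u)=\dual{E'(\vec u)}{JI^{-1}\mathcal{A}'(\vec u)}$ a definite sign. Two points in your sketch, however, do not close as written. First, the final contradiction cannot come from the bound $|\mathcal{I}(\vec u)|\le C\,\mathrm{dist}(\vec u,\text{orbit})$ together with a sign of $\mathcal{I}$: those two facts only yield $\lambda^2\lesssim\varepsilon$, which is consistent with stability. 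The contradiction requires the \emph{antiderivative} $\mathcal{A}(\vec u)=(G(\Theta(\vec u))\vec u,J^{-1}\vec\psi)_{H}$, which is bounded on the tubular neighbourhood, together with $\frac{d}{dt}\mathcal{A}(\vec u(t))=\mathcal{P}(\vec u(t))$ (Lemma \ref{lem:ap}) and a lower bound $\mathcal{P}(\vec u(t))\ge \delta>0$ uniform in $t$; then $\mathcal{A}$ grows linearly and must leave the neighbourhood. Second, the uniform positivity of $\mathcal{P}$ along the flow is not a pointwise consequence of (iii); it comes from the reversed coercivity estimate $E(\vec u)\ge E(\vec\phi)+k^{*}\sgn\Lambda(\vec u)\cdot\mathcal{P}(\vec u)$ on the charge sphere (Lemma \ref{lem:elp}) combined with conservation of $E$ and $Q$, and this forces you to \emph{construct} initial data $\vec\phi_{\lambda}=\vec\phi+\lambda\vec\psi+\sigma(\lambda)\vec\phi$ with $Q(\vec\phi_{\lambda})=Q(\vec\phi)$, $E(\vec\phi_{\lambda})<E(\vec\phi)$ and $\lambda\mathcal{P}(\vec\phi_{\lambda})<0$ (Lemma \ref{lem:data}); assumption (iii) is used a second time precisely here, integrated along the curve $\lambda\mapsto\vec\phi_{\lambda}$, to show the energy strictly decreases. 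If you make these two steps explicit, the rest of your plan (the role of $(\vec\psi,J\vec\phi)_X=0$ in killing the phase-modulation correction term, and the use of (ii) after discarding the $a\vec\phi$ and $bJ\vec\phi$ components) matches the paper's argument.
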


We first prove Theorem \ref{thm4} using Proposition \ref{prop4}. 

\begin{proof}[Proof of Theorem \ref{thm4}]
The proof consists of verifying the assumptions (i), (ii), (iii) of Proposition \ref{prop4}. 
Let $(\alpha,\beta)=(0,1)$ and $\Phi=(0,\varphi_{\omega})$. We take 
$$\vec \psi=(\psi_1,\psi_2)=(\varphi_{\omega},0)/\|\varphi_{\omega}\|_{L^2}.$$
Then, $\|\vec \psi\|_{H}=1$, $(\vec \psi,\Phi)_{H}=0$, 
$(\vec \psi,J\Phi)_{H}=(\vec \psi,J\Phi)_{X}=0$, and 
$$S_{\omega}''(\vec \phi)\vec \psi
=(L_1\varphi_{\omega},0)/\|\varphi_{\omega}\|_{L^2}=(0,0).$$
Thus, (i) is satisfied. 
The assumption (ii) is proved in the same way as the proof of Theorem \ref{thm3}. 
Finally, we prove (iii). 
Let $\lambda\in \R$ and $\vec z=(z_1,z_2)\in X_{\rm rad}$, and put 
$\vec v=(v_1,v_2)=\lambda \vec \psi+\vec z$. 
Then, we have 
$$v_1=\lambda \psi_1+z_1, \quad v_2=z_2, \quad 
\psi_1=\varphi_{\omega}/\|\varphi_{\omega}\|_{L^2},$$ 
and 
\begin{align*}
&\|\varphi_{\omega}\|_{L^2}\dual{S_{\omega}'(\Phi+\vec v)}{\vec \psi} \\
&=\Re \int_{\R^N}\{\nabla v_1\cdot \nabla \varphi_{\omega}
+\omega v_1\varphi_{\omega}-\kappa |v_1|v_1\varphi_{\omega}
-v_1(\varphi_{\omega}+\overline{v_2})\varphi_{\omega}\}\,dx \\
&=\Re \int_{\R^N}\{v_1(-\Delta \varphi_{\omega}
+\omega \varphi_{\omega}-\varphi_{\omega}^2)
-\kappa |v_1|v_1\varphi_{\omega}-v_1\overline{v_2}\varphi_{\omega}\}\,dx \\
&=-\kappa \Re \int_{\R^N}|v_1|v_1\varphi_{\omega}\,dx
-\Re \int_{\R^N}v_1\overline{v_2}\varphi_{\omega}\,dx. 
\end{align*}
Thus, we have 
\begin{equation}\label{mmm}
\dual{S_{\omega}'(\Phi+\vec v)}{\vec \psi} 
=-\kappa \Re \int_{\R^N}|v_1|v_1\psi_1\,dx
-\Re \int_{\R^N}v_1\overline{v_2}\psi_1\,dx.
\end{equation}
Here, we have
$$\sgn (\lambda) \cdot \kappa \Re \int_{\R^N}|\lambda \psi_1|\lambda \psi_1 \psi_1\,dx
=C_0\lambda^2, \quad \text{where }
C_0:=\kappa \|\varphi_{\omega}\|_{L^3}^3/\|\varphi_{\omega}\|_{L^2}^3,$$
and the first term of the right hand side of \eqref{mmm} is estimated as follows. 
\begin{align*}
&\left|\sgn (\lambda)\cdot \kappa \Re \int_{\R^N}|v_1|v_1\psi_1\,dx-C_0\lambda^2\right|
\le \kappa \int_{\R^N}\Big||v_1|v_1-|\lambda \psi_1|\lambda \psi_1 \Big|\psi_1\,dx \\
&\le C \int_{\R^N}(|v_1|+|\lambda \psi_1|)|v_1-\lambda \psi_1|\psi_1\,dx 
\le C \int_{\R^N}(|\lambda \psi_1|+|z_1|)|z_1|\psi_1\,dx \\
&\le C|\lambda|\|z_1\|_{L^3}\|\psi_1\|_{L^3}^2
+C\|z_1\|_{L^3}^2\|\psi_1\|_{L^3} \le C_0\lambda^2/4+C_1\|z_1\|_{H^1}^2
\end{align*}
for some constant $C_1$ depending on $\varphi_\omega$. 
Here, in the last inequality, 
we used the inequality of the type $2ab\le \varepsilon^2a^2+b^2/\varepsilon^2$. 
While, the second term of the right hand side of \eqref{mmm} is estimated as follows. 
\begin{align*}
&\left|\Re \int_{\R^N}v_1\overline{v_2}\psi_1\,dx \right|
\le |\lambda|\|z_2\|_{L^3}\|\psi_1\|_{L^3}^2
+\|z_1\|_{L^3}\|z_2\|_{L^3}\|\psi_1\|_{L^3} \\
&\le C_0\lambda^2/4+C_2\|z_2\|_{H^1}^2+C_3\|z_1\|_{H^1}\|z_2\|_{H^1}
\end{align*}
for some positive constants $C_2$ and $C_3$. Thus, we have 
$$\sgn \lambda \cdot \dual{S_{\omega}'(\Phi+\lambda \vec \psi+\vec z)}{\vec \psi}
\le -C_0\lambda^2/2+C_4\|\vec z\|_{X}^2$$
for some constant $C_4>0$. This completes the proof. 
\end{proof}

In the rest of this section, we give the proof of Proposition \ref{prop4} 
by modifying the proof of Theorem 2 of \cite{oht}. 
We define 
$$\mathcal{N}_{\varepsilon}(\vec \phi)=\{\vec u\in X_{\rm rad}:
\inf_{\theta\in \R}\|G(\theta)\vec u-\vec \phi\|_{X}<\varepsilon\},$$
and the identification operator $I:X\to X^*$ by 
$$\dual{I\vec u}{\vec v}=(\vec u,\vec v)_{H}, \quad \vec u, \vec v\in X.$$

\begin{lemma}\label{lem:ift}
There exist $\varepsilon>0$ and a $C^2$ map 
$\Theta:\mathcal{N}_{\varepsilon}(\vec \phi)\to \R/2\pi \Z$ such that 
\begin{align}
&\|G(\Theta (\vec u))\vec u-\vec \phi\|_{X}
\le \|G(\theta)\vec u-\vec \phi\|_{X}, 
\nonumber \\
&(G(\Theta (\vec u))\vec u,J\vec \phi)_{X}=0, \quad 
\Theta (G(\theta)\vec u)=\Theta (\vec u)-\theta,
\nonumber \\
&I^{-1}\Theta'(\vec u)=
\frac{JG(-\Theta(\vec u))(1-\Delta)\vec \phi}
{(G(\Theta(\vec u))\vec u,J^2\vec \phi)_X}
\label{ift1}
\end{align}
for all $\vec u\in \mathcal{N}_{\varepsilon}(\vec \phi)$ 
and $\theta \in \R/2\pi \Z$. 
\end{lemma}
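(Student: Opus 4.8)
The plan is to obtain $\Theta(\vec u)$ as the locally unique critical point of the smooth $2\pi$-periodic function $\theta\mapsto\|G(\theta)\vec u-\vec\phi\|_X^2$ and to produce it by the implicit function theorem applied to a single scalar equation. The computation rests on three elementary facts: $\frac{d}{d\theta}G(\theta)\vec u=JG(\theta)\vec u$; $J$ commutes with $G(\theta)$ and with $1-\Delta$ and is skew-symmetric for $(\cdot,\cdot)_X$, so $(J\vec w,\vec w)_X=0$; and $G(\theta)$ is $X$-orthogonal, $G(\theta)^{*}=G(-\theta)$. Using the first two,
\[
\frac{1}{2}\frac{d}{d\theta}\|G(\theta)\vec u-\vec\phi\|_X^2
=(JG(\theta)\vec u,\,G(\theta)\vec u-\vec\phi)_X
=-(JG(\theta)\vec u,\vec\phi)_X
=(G(\theta)\vec u,J\vec\phi)_X=:F(\vec u,\theta),
\]
so the critical points are precisely the zeros of $F$, and $\frac{1}{2}\partial_\theta^2\|G(\theta)\vec u-\vec\phi\|_X^2=\partial_\theta F(\vec u,\theta)=(JG(\theta)\vec u,J\vec\phi)_X$.

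Now $F$ is $C^\infty$ jointly in $(\vec u,\theta)$ (smooth in $\theta$, bounded and linear in $\vec u$), $F(\vec\phi,0)=(\vec\phi,J\vec\phi)_X=0$, and $\partial_\theta F(\vec\phi,0)=\|J\vec\phi\|_X^2>0$ since $\vec\phi\neq0$. The implicit function theorem gives $\varepsilon_0>0$ and a $C^\infty$ map $\Theta$ on $\{\vec u\in X_{\rm rad}:\|\vec u-\vec\phi\|_X<\varepsilon_0\}$ with $\Theta(\vec\phi)=0$, $F(\vec u,\Theta(\vec u))=0$, and uniqueness of the zero of $F$ in a box around $(\vec\phi,0)$. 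Since $F(G(\theta)\vec u,\sigma)=(G(\sigma+\theta)\vec u,J\vec\phi)_X=F(\vec u,\sigma+\theta)$, I extend $\Theta$ to the tube $\mathcal{N}_{\varepsilon}(\vec\phi)$ for $\varepsilon\le\varepsilon_0$ small: pick $\theta_0$ with $G(\theta_0)\vec u$ in the ball above and set $\Theta(\vec u):=\Theta(G(\theta_0)\vec u)+\theta_0$; uniqueness makes this independent of $\theta_0$, and it yields both $\Theta(G(\theta)\vec u)=\Theta(\vec u)-\theta$ and $(G(\Theta(\vec u))\vec u,J\vec\phi)_X=0$, with $\Theta\in C^2$ (indeed $C^\infty$) on $\mathcal{N}_\varepsilon(\vec\phi)$.

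For the minimization inequality, $\frac{1}{2}\partial_\theta^2\|G(\theta)\vec u-\vec\phi\|_X^2$ at $(\vec\phi,0)$ equals $\|J\vec\phi\|_X^2>0$, so for $\vec u$ near $\vec\phi$ the map $\theta\mapsto\|G(\theta)\vec u-\vec\phi\|_X^2$ has a strict local minimum at $\Theta(\vec u)$; its global minimum over the compact circle $\R/2\pi\Z$ is attained at some critical $\theta_*$ with $G(\theta_*)\vec u$ in the ball above, so by uniqueness of critical points there $\theta_*\equiv\Theta(\vec u)\pmod{2\pi}$, giving $\|G(\Theta(\vec u))\vec u-\vec\phi\|_X\le\|G(\theta)\vec u-\vec\phi\|_X$ for all $\theta$. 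For \eqref{ift1}, differentiate $(G(\Theta(\vec u))\vec u,J\vec\phi)_X=0$ along $\vec h\in X_{\rm rad}$:
\[
\dual{\Theta'(\vec u)}{\vec h}\,(JG(\Theta(\vec u))\vec u,J\vec\phi)_X+(G(\Theta(\vec u))\vec h,J\vec\phi)_X=0.
\]
By $G(\theta)^{*}=G(-\theta)$, the mutual commutativity of $J$, $G(\theta)$ and $1-\Delta$, and $(\vec a,\vec b)_X=(\vec a,(1-\Delta)\vec b)_H$ (legitimate since $\vec\phi$ is regular, so $(1-\Delta)\vec\phi\in X$), the second term equals $(\vec h,\,JG(-\Theta(\vec u))(1-\Delta)\vec\phi)_H$, while $(JG(\Theta(\vec u))\vec u,J\vec\phi)_X=-(G(\Theta(\vec u))\vec u,J^2\vec\phi)_X$ with $(G(\Theta(\vec u))\vec u,J^2\vec\phi)_X$ close to $-\|J\vec\phi\|_X^2\neq0$. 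Solving for $\dual{\Theta'(\vec u)}{\vec h}$ and reading off $I^{-1}\Theta'(\vec u)$ from $\dual{I\vec v}{\vec h}=(\vec v,\vec h)_H$ gives exactly \eqref{ift1}.

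The routine algebra and the implicit differentiation aside, the delicate point is the passage from the purely local solution near $\vec\phi$ to a single-valued $C^2$ map on the whole tube $\mathcal{N}_\varepsilon(\vec\phi)$, together with the verification that the $\Theta(\vec u)$ so obtained realizes the global minimum over $\R/2\pi\Z$ rather than a merely local one; here the $G$-equivariance of $F$ must be combined with the uniqueness clause of the implicit function theorem, and $\varepsilon$ has to be chosen small enough for this to be valid.
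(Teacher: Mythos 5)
Your argument is, in substance, exactly the proof of Lemma 3.2 of Grillakis--Shatah--Strauss that the paper invokes without reproducing: apply the implicit function theorem to $F(\vec u,\theta)=(G(\theta)\vec u,J\vec \phi)_X$, whose $\theta$-derivative at $(\vec \phi,0)$ equals $\|J\vec \phi\|_X^2>0$, then patch equivariantly over the tube and differentiate the orthogonality relation to get \eqref{ift1}. All the local computations are correct and complete: the skew-symmetry of $J$, the identity $\tfrac{d}{d\theta}G(\theta)\vec u=JG(\theta)\vec u$, the identification of critical points of $\theta\mapsto\|G(\theta)\vec u-\vec \phi\|_X^2$ with zeros of $F$, the formula for $I^{-1}\Theta'(\vec u)$ with nonvanishing denominator near $-\|J\vec \phi\|_X^2$, and the observation that one needs $(1-\Delta)\vec \phi\in X$, which is precisely why the paper's one-line proof records the elliptic regularity $\vec \phi\in H^3(\R^N)^2$.

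The one step that does not survive scrutiny is the one you yourself single out as delicate: the claim that the uniqueness clause of the implicit function theorem makes $\Theta(\vec u):=\Theta(G(\theta_0)\vec u)+\theta_0$ independent of $\theta_0$ modulo $2\pi$. This requires the stabilizer $\{\theta\in\R/2\pi\Z:G(\theta)\vec \phi=\vec \phi\}$ to be trivial, and for the $\vec \phi$ to which the lemma is actually applied in this paper, namely $\vec \phi=(0,\varphi_{\omega})$, it is not: $G(\pi)\vec \phi=\vec \phi$. Concretely, when $\phi_1=0$ one has $F(\vec u,\theta+\pi)=(G(\theta)\vec u,JG(-\pi)\vec \phi)_X=F(\vec u,\theta)$, so if $\theta_0$ is admissible then so is $\theta_0+\pi$ and the two candidate values of $\Theta(\vec u)$ differ by $\pi$; the same obstruction shows that the identity $\Theta(G(\theta)\vec u)=\Theta(\vec u)-\theta$ cannot hold in $\R/2\pi\Z$ (take $\vec u=\vec \phi$ and $\theta=\pi$). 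This is a defect of the statement as much as of your proof, and the standard repair is harmless — let $\Theta$ take values in the quotient of $\R/2\pi\Z$ by the stabilizer (here $\R/\pi\Z$), or add the hypothesis that the stabilizer is trivial — but as written your uniqueness argument fails in exactly the case the paper needs, so the patching step should be reworked rather than asserted.
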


\begin{proof}
See Lemma 3.2 of \cite{GSS1}. 
Note that $\vec \phi\in H^3(\R^N)^2$
by the elliptic regularity for \eqref{sp}. 
\end{proof}

We put $M(\vec u)=G(\Theta (\vec u))\vec u$. 
Then we have $M(\vec \phi)=\vec \phi$ and 
$M(G(\theta)\vec u)=M(\vec u)$ 
for $\vec u\in \mathcal{N}_{\varepsilon}(\vec \phi)$ 
and $\theta \in \R$. 
We define $\mathcal{A}$ and $\Lambda$ by 
\begin{equation}\label{def:alam}
\mathcal{A}(\vec u)=(M(\vec u),J^{-1}\vec \psi)_{H}, \quad 
\Lambda (\vec u)=(M(\vec u),\vec \psi)_{H}
\end{equation}
for $\vec u\in \mathcal{N}_{\varepsilon}(\vec \phi)$. 
Then we have 
\begin{align}
&JI^{-1}\mathcal{A}'(\vec u)=G(-\Theta (\vec u))\vec \psi
-\Lambda (\vec u)JI^{-1}\Theta'(\vec u), \label{ift2} \\
&0=\frac{d}{d\theta}\mathcal{A}(G(\theta)\vec u)|_{\theta=0}
=\dual{\mathcal{A}'(\vec u)}{J\vec u} 
=-\dual{I\vec u}{JI^{-1}\mathcal{A}'(\vec u)}. \label{ift3}
\end{align}
We define $\mathcal{P}$ by 
$$\mathcal{P}(\vec u)
=\dual{E'(\vec u)}{JI^{-1}\mathcal{A}'(\vec u)}$$
for $\vec u\in \mathcal{N}_{\varepsilon}(\vec \phi)$. 
Note that by \eqref{ift1}, \eqref{ift2} and \eqref{ift3}, we have 
\begin{align}
\mathcal{P}(\vec u)
&=\dual{S_{\omega}'(\vec u)}{JI^{-1}\mathcal{A}'(\vec u)} 
\nonumber \\
&=\dual{S_{\omega}'(M(\vec u))}{\vec \psi}
-\frac{\Lambda (\vec u)}{(M(\vec u),J^2\vec \phi)_{X}}
\dual{S_{\omega}'(M(\vec u))}{J^2(1-\Delta)\vec \phi}.
\label{ift4}
\end{align}

\begin{lemma}\label{lem:ap}
Let $\mathcal{I}$ be an interval of $\R$. 
Let $\vec u\in C(\mathcal{I},X)\cap C^1(\mathcal{I},X^*)$ 
be a solution of \eqref{eq:co}, and assume that 
$\vec u(t)\in \mathcal{N}_{\varepsilon}(\vec \phi)$ 
for all $t\in \mathcal{I}$. Then 
$$\frac{d}{dt}\mathcal{A}(\vec u(t))
=\mathcal{P}(\vec u(t)), \quad t\in \mathcal{I}.$$
\end{lemma}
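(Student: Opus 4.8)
The plan is to differentiate $\mathcal{A}(\vec u(t)) = (M(\vec u(t)), J^{-1}\vec\psi)_H$ along the flow and recognize the result as $\mathcal{P}(\vec u(t))$. First I would use the chain rule: since $\vec u(t)\in \mathcal{N}_\varepsilon(\vec\phi)$ for $t\in\mathcal{I}$ and $\mathcal{A}$ is $C^1$ on $\mathcal{N}_\varepsilon(\vec\phi)$ (because $M$ is $C^2$ by Lemma \ref{lem:ift}), we get
\[
\frac{d}{dt}\mathcal{A}(\vec u(t)) = \dual{\mathcal{A}'(\vec u(t))}{\partial_t \vec u(t)}.
\]
Then I would substitute the equation $\partial_t\vec u = -JE'(\vec u)$ and rewrite, using $\dual{J\vec f}{\vec v} = -\dual{\vec f}{J\vec v}$ together with the definition $\dual{I\vec u}{\vec v}=(\vec u,\vec v)_H$, so that
\[
\dual{\mathcal{A}'(\vec u)}{-JE'(\vec u)} = \dual{E'(\vec u)}{J I^{-1}\mathcal{A}'(\vec u)} = \mathcal{P}(\vec u),
\]
where the last equality is exactly the definition of $\mathcal{P}$. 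This is the entire content of the identity, so the body of the proof is essentially this short computation.

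The one point that needs care — and the main (mild) obstacle — is the justification of the chain rule at the level of regularity available: $\vec u\in C(\mathcal{I},X)\cap C^1(\mathcal{I},X^*)$, so $\partial_t\vec u(t)$ lives only in $X^*$, not in $X$. Thus I must check that $\mathcal{A}'(\vec u(t))$, a priori an element of $X^*$, actually extends to a bounded functional on $X^*$ — equivalently that $I^{-1}\mathcal{A}'(\vec u(t))\in X$ — so that the pairing $\dual{\mathcal{A}'(\vec u(t))}{\partial_t\vec u(t)}$ makes sense and the composition $t\mapsto \mathcal{A}(\vec u(t))$ is genuinely differentiable. This is where I would invoke formula \eqref{ift2} for $JI^{-1}\mathcal{A}'(\vec u)$: its right-hand side $G(-\Theta(\vec u))\vec\psi - \Lambda(\vec u)JI^{-1}\Theta'(\vec u)$ is manifestly in $X$ (indeed $\vec\psi\in X$ and, by \eqref{ift1}, $I^{-1}\Theta'(\vec u)$ is a smooth multiple of $JG(-\Theta(\vec u))(1-\Delta)\vec\phi$, which lies in $X$ since $\vec\phi\in H^3(\R^N)^2$ by elliptic regularity as noted in Lemma \ref{lem:ift}). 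Hence $I^{-1}\mathcal{A}'(\vec u)\in X$ with the map $\vec u\mapsto I^{-1}\mathcal{A}'(\vec u)$ continuous from $\mathcal{N}_\varepsilon(\vec\phi)$ into $X$.

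With that regularity in hand the argument closes cleanly: write the difference quotient
\[
\frac{\mathcal{A}(\vec u(t+h))-\mathcal{A}(\vec u(t))}{h},
\]
express $\mathcal{A}(\vec v) - \mathcal{A}(\vec u) = \dual{\mathcal{A}'(\vec u)}{\vec v - \vec u} + o(\|\vec v - \vec u\|_X)$ using the $C^1$ property of $\mathcal{A}$ on $X$, but then — since $(\vec u(t+h)-\vec u(t))/h \to \partial_t\vec u(t)$ in $X^*$ and $\mathcal{A}'(\vec u(t))$ is (identified with) an element of $X$ — pass to the limit in the $X^*$–$(X^*)^*$ pairing, using also continuity of $\vec u(\cdot)$ in $X$ to control the $o(\cdot)$ term. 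This gives $\frac{d}{dt}\mathcal{A}(\vec u(t)) = \dual{\partial_t\vec u(t)}{\mathcal{A}'(\vec u(t))} = -\dual{JE'(\vec u(t))}{I^{-1}\mathcal{A}'(\vec u(t))}_{X^*,X}$, and applying $\dual{J\vec f}{\vec v}=-\dual{\vec f}{J\vec v}$ once more yields $\dual{E'(\vec u(t))}{JI^{-1}\mathcal{A}'(\vec u(t))} = \mathcal{P}(\vec u(t))$, completing the proof. The analogous continuity/differentiability bookkeeping is standard (cf. Lemma 3.3 and its proof in \cite{GSS1}), so in the write-up I would state the regularity of $I^{-1}\mathcal{A}'$ explicitly from \eqref{ift1}–\eqref{ift2} and then reduce to the formal computation.
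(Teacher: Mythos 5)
Your proposal is correct: the paper itself gives no details here, simply citing Lemma 4.6 of \cite{GSS1} and Lemma 2 of \cite{oht}, and your computation (chain rule, substitution of $\partial_t\vec u=-JE'(\vec u)$, antisymmetry of $J$, and the regularity upgrade $I^{-1}\mathcal{A}'(\vec u)\in X$ via \eqref{ift1}--\eqref{ift2} and $\vec\phi\in H^3(\R^N)^2$) is exactly the standard argument those references supply. No gaps.
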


\begin{proof}
See Lemma 4.6 of \cite{GSS1} and Lemma 2 of \cite{oht}. 
\end{proof}

\begin{lemma}\label{lem:elp}
There exist positive constants $k^*$ and $\varepsilon_0$  such that 
$$E(\vec u)\ge E(\vec \phi)
+k^* \sgn \Lambda (\vec u) \cdot \mathcal{P}(\vec u)$$
for all $\vec u\in \mathcal{N}_{\varepsilon_0}(\vec \phi)$ 
satisfying $Q(\vec u)=Q(\vec \phi)$. 
\end{lemma}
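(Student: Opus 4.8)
The idea is the standard ``convexity/Taylor expansion along the constraint surface'' argument of Grillakis--Shatah--Strauss, adapted to the present degenerate setting where $S_\omega''(\vec\phi)\vec\psi=\vec 0$. First I would expand $E$ near $\vec\phi$ on the charge level set $\{Q(\vec u)=Q(\vec\phi)\}$. Writing $\vec u\in\mathcal N_{\varepsilon_0}(\vec\phi)$ and $M(\vec u)=G(\Theta(\vec u))\vec u$, the functionals $E$ and $Q$ are $G(\theta)$-invariant, so $E(\vec u)=E(M(\vec u))$ and $Q(M(\vec u))=Q(\vec\phi)$; hence it suffices to work with $\vec v:=M(\vec u)-\vec\phi$, which by Lemma \ref{lem:ift} satisfies the orthogonality $(\,\vec v,J\vec\phi)_X=0$ (equivalently $(M(\vec u),J\vec\phi)_X=0$), is small in $X$, and is radial. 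Decompose $\vec v = \lambda\vec\psi+\vec z$ where $\lambda=\Lambda(\vec u)=(\vec v,\vec\psi)_H$ (so $\Lambda(\vec u)$ records exactly this coefficient, since $(\vec\phi,\vec\psi)_H=0$) and $\vec z\perp_H\vec\psi$. The constraint $Q(M(\vec u))=Q(\vec\phi)$ together with $(\vec v,J\vec\phi)_X=0$ gives, after a short computation, that $(\vec\phi,\vec v)_H$ and $(J\vec\phi,\vec v)_H$ are both $O(\|\vec v\|_X^2)$; projecting $\vec z$ further onto the orthogonal complement of $\mathrm{span}\{\vec\phi,J\vec\phi\}$ in $H$ costs only quadratically, so we may assume $\vec z$ satisfies the hypotheses of (ii) up to $O(\|\vec v\|_X^2)$ error.

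Second, Taylor-expand the action: since $S_\omega'(\vec\phi)=0$,
\begin{equation}\label{eq:taylor}
S_\omega(\vec\phi+\vec v)=S_\omega(\vec\phi)+\tfrac12\dual{S_\omega''(\vec\phi)\vec v}{\vec v}+o(\|\vec v\|_X^2).
\end{equation}
Because $S_\omega''(\vec\phi)\vec\psi=\vec 0$ by (i), the cross term drops and $\dual{S_\omega''(\vec\phi)\vec v}{\vec v}=\dual{S_\omega''(\vec\phi)\vec z}{\vec z}\ge k_0\|\vec z\|_X^2$ by (ii) (absorbing the quadratic projection errors into a slightly smaller constant and shrinking $\varepsilon_0$). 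Since $Q(\vec\phi+\vec v)=Q(\vec\phi)$ along the level set, \eqref{eq:taylor} reads
\begin{equation}\label{eq:Ecoer}
E(\vec u)=E(M(\vec u))\ge E(\vec\phi)+\tfrac{k_0}{2}\|\vec z\|_X^2+o(\|\vec v\|_X^2)\ge E(\vec\phi)+\tfrac{k_0}{4}\|\vec z\|_X^2
\end{equation}
for $\varepsilon_0$ small; in particular $E(\vec u)\ge E(\vec\phi)$ always, and we have a quadratic gain in $\|\vec z\|_X$.

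Third, I must bound $\sgn\Lambda(\vec u)\cdot\mathcal P(\vec u)$ from above by (a constant times) $\|\vec z\|_X^2+o(\|\vec v\|_X^2)$. Using the formula \eqref{ift4} for $\mathcal P$ and the fact that $\Lambda(\vec u)=\lambda$ and $M(\vec u)=\vec\phi+\lambda\vec\psi+\vec z$ with $\|\vec z\|_X\le\varepsilon$, the second term in \eqref{ift4} is $O(|\lambda|\cdot\|S_\omega'(M(\vec u))\|_{X^*})=O(|\lambda|(|\lambda|+\|\vec z\|_X))$ since $S_\omega'(\vec\phi)=0$; and $\sgn(\lambda)\dual{S_\omega'(M(\vec u))}{\vec\psi}$ is exactly the quantity estimated in hypothesis (iii), namely $\le -k_1\lambda^2+k_2\|\vec z\|_X^2+o(\lambda^2+\|\vec z\|_X^2)$. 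Combining,
\begin{equation}\label{eq:Pbd}
\sgn\Lambda(\vec u)\cdot\mathcal P(\vec u)\le -\tfrac{k_1}{2}\lambda^2+2k_2\|\vec z\|_X^2+o(\lambda^2+\|\vec z\|_X^2).
\end{equation}
Now choose $k^*=k_0/(8k_2)$; then $\tfrac{k_0}{4}\|\vec z\|_X^2\ge k^*\cdot 2k_2\|\vec z\|_X^2$, so adding \eqref{eq:Ecoer} and $-k^*$ times \eqref{eq:Pbd} (i.e.\ moving the $\mathcal P$ term to the $E$ side) yields
\[
E(\vec u)\ge E(\vec\phi)+k^*\sgn\Lambda(\vec u)\cdot\mathcal P(\vec u)+\tfrac{k^*k_1}{2}\lambda^2-o(\lambda^2+\|\vec z\|_X^2)\ge E(\vec\phi)+k^*\sgn\Lambda(\vec u)\cdot\mathcal P(\vec u)
\]
once $\varepsilon_0$ is small enough that the $o(\cdot)$ term is dominated by $\tfrac{k^*k_1}{2}\lambda^2+\tfrac{k_0}{8}\|\vec z\|_X^2$. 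This is the claim.

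The main obstacle is the bookkeeping in the third step: controlling the ``Lagrange-multiplier'' correction term in \eqref{ift4} and the replacement of $M(\vec u)$ by $\vec\phi+\lambda\vec\psi+\vec z$ with only quadratic errors, while keeping track of which errors are genuinely $O(\|\vec z\|_X^2)$ versus those that are merely $o(\lambda^2+\|\vec z\|_X^2)$ — the sign of the $\lambda^2$ coefficient must come out strictly negative (from (iii)) so that it can be sacrificed, and the positive $\|\vec z\|_X^2$ gain in $E$ must strictly beat the $\|\vec z\|_X^2$ loss in $\mathcal P$, which forces the particular choice of $k^*$ and a final shrinking of $\varepsilon_0$. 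The ingredients beyond that — the Taylor expansions, the smoothness of $M$, $\Theta$, $\mathcal A$, $\Lambda$ from Lemma \ref{lem:ift}, and the coercivity (ii) — are all in hand.
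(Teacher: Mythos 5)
Your plan follows the same route as the paper's proof (GSS--type decomposition of $\vec v=M(\vec u)-\vec\phi$, Taylor expansion of $S_\omega$ on the charge level set, coercivity from hypothesis (ii), hypothesis (iii) plus the formula \eqref{ift4} for $\mathcal P$, and a final small choice of $k^*$), but two of your intermediate claims are false as stated, and the second one breaks your concluding inequality. First, $(J\vec\phi,\vec v)_H$ is \emph{not} $O(\|\vec v\|_X^2)$: the orthogonality $(\vec v,J\vec\phi)_X=0$ only gives $(\vec v,J\vec\phi)_H=-(\nabla\vec v,\nabla J\vec\phi)_H=O(\|\vec v\|_X)$, so you cannot remove the $J\vec\phi$-component of your $\vec z$ ``at quadratic cost''. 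What actually saves this step is that $J\vec\phi\in\ker S_\omega''(\vec\phi)$ and that, in the full decomposition $\vec v=a\vec\phi+bJ\vec\phi+c\vec\psi+\vec w$ with $\vec w$ $H$-orthogonal to $\vec\phi$, $J\vec\phi$, $\vec\psi$, the identity $0=(\vec v,J\vec\phi)_X=b\|J\vec\phi\|_X^2+(\vec w,J\vec\phi)_X$ gives the \emph{first-order} bound $|b|\,\|J\vec\phi\|_X\le\|\vec w\|_X$; together with $a=O(\|\vec v\|_X^2)$ this yields $\|\vec v\|_X\le|c|\,\|\vec\psi\|_X+2\|\vec w\|_X+O(\|\vec v\|_X^2)$, and the coercivity (ii) applied to $\vec w$ alone still controls everything. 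This is a repair of your justification, not of your conclusion.

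The second problem is a genuine gap. You bound the multiplier term in \eqref{ift4} by $O\bigl(|\lambda|\,\|S_\omega'(M(\vec u))\|_{X^*}\bigr)=O\bigl(|\lambda|(|\lambda|+\|\vec z\|_X)\bigr)$ using only $S_\omega'(\vec\phi)=0$. This produces a contribution $+C\lambda^2$ with an \emph{uncontrolled} constant $C$, which cannot be absorbed into the $-k_1\lambda^2$ coming from (iii); your claimed estimate $\sgn\Lambda(\vec u)\cdot\mathcal P(\vec u)\le-\tfrac{k_1}{2}\lambda^2+2k_2\|\vec z\|_X^2+o(\cdot)$ therefore does not follow, and if $C>k_1$ the argument collapses. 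The fix, which is exactly what the paper does, is to use $S_\omega''(\vec\phi)\vec\psi=\vec 0$ and $S_\omega''(\vec\phi)J\vec\phi=\vec 0$ (plus $a=O(\|\vec v\|_X^2)$) to sharpen the expansion to $S_\omega'(M(\vec u))=S_\omega''(\vec\phi)\vec w+o(\|\vec v\|_X)$, so that the correction term is $O(|\lambda|\,\|\vec w\|_X)+o(\|\vec v\|_X^2)$; Young's inequality with a small parameter then bounds it by $\tfrac{k_1}{2}\lambda^2+k_4\|\vec w\|_X^2$, where the large $\|\vec w\|_X^2$ loss is harmless because it is later absorbed by taking $k^*$ proportional to $k_0/k_5$. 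With these two corrections your outline coincides with the paper's proof of Lemma~\ref{lem:elp}.
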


\begin{proof}
We put $\vec v=M(\vec u)-\vec \phi$, 
and decompose $\vec v$ as 
$$\vec v=a\vec \phi+bJ\vec \phi+c\vec \psi+\vec w,$$
where $a$, $b$, $c\in \R$, 
and $\vec w\in X_{\rm rad}$ satisfies 
$(\vec \phi,\vec w)_{H}=(J\vec \phi,\vec w)_{H}
=(\vec \psi,\vec w)_{H}=0$. 
Since $Q(\vec \phi)=Q(\vec u)=Q(M(\vec u))
=Q(\vec \phi)+(\vec \phi,\vec v)_{H}+Q(\vec v)$ 
and $(\vec \phi,\vec v)_{H}=a\|\vec \phi\|_{H}^2$,
we have $a=O(\|\vec v\|_{X}^2)$. 
Moreover, by Lemma \ref{lem:ift} and 
by the assumption (i) of Proposition \ref{prop4}, 
we have $(M(\vec u),J\vec \phi)_{X}=0$ 
and $(J\vec \phi,\vec \psi)_{X}=0$. 
Thus, we have $0=(\vec v,J\vec \phi)_{X}
=b\|J\vec \phi\|_{X}^2+(\vec w,J\vec \phi)_{X}$, 
$|b| \|J\vec \phi\|_{X}\le \|\vec w\|_{X}$ and 
\begin{equation}\label{cvw2}
\|\vec v\|_{X}\le 
|c|\|\vec \psi\|_{X}+2\|\vec w\|_{X}+O(\|\vec v\|_{X}^2).
\end{equation}
Since $S_{\omega}'(\vec \phi)=0$ and $Q(\vec u)=Q(\vec \phi)$, 
by the Taylor expansion, we have 
\begin{equation}\label{tay1}
E(\vec u)-E(\vec \phi)
=S_{\omega}(M(\vec u))-S_{\omega}(\vec \phi) 
=\frac{1}{2}\dual{S_{\omega}''(\vec \phi)\vec v}{\vec v}
+o(\|\vec v\|_{X}^2). 
\end{equation}
Here, since $a=O(\|\vec v\|_{X}^2)$ and 
$S_{\omega}''(\vec \phi)(J\vec \phi)=S_{\omega}''(\vec \phi)\vec \psi=\vec 0$, 
by the assumption (ii) of Proposition \ref{prop4}, we have 
\begin{align}
&E(\vec u)-E(\vec \phi)
=\frac{1}{2}\dual{S_{\omega}''(\vec \phi)\vec v}{\vec v}
+o(\|\vec v\|_{X}^2) 
\nonumber \\
&=\frac{1}{2}\dual{S_{\omega}''(\vec \phi)\vec w}{\vec w}
+o(\|\vec v\|_{X}^2) 
\ge \frac{k_0}{2}\|\vec w\|_{X}^2-o(\|\vec v\|_{X}^2). 
\label{tay21}
\end{align}
On the other hand, we have 
$c=(\vec v,\vec \psi)_{H}=\Lambda (\vec u)=O(\|\vec v\|_{X})$, 
$$S_{\omega}'(\vec \phi+\vec v)
=S_{\omega}'(\vec \phi)+S_{\omega}''(\vec \phi)\vec v+o(\|\vec v\|_{X})
=S_{\omega}''(\vec \phi)\vec w+o(\|\vec v\|_{X}),$$
and $(M(\vec u),J^2\vec \phi)_{X}
=(\vec \phi, J^2\vec \phi)_{X}+O(\|\vec v\|_{X})$.
Thus, by \eqref{ift4} we have 
$$\mathcal{P}(\vec u)
=\dual{S_{\omega}'(\vec \phi+\vec v)}{\vec \psi}
+\frac{c}{\|J\vec \phi\|_{X}^2}
\dual{S_{\omega}''(\vec \phi)\vec w}{J^2(1-\Delta)\vec \phi}
+o(\|\vec v\|_{X}^2).$$
Here, by the assumption (iii) of Proposition \ref{prop4}, 
we have 
\begin{align*}
&\sgn (c) \cdot \dual{S_{\omega}'(\vec \phi+\vec v)}{\vec \psi} \\
&\le -k_1 c^2+k_2 \|a\vec \phi+bJ\vec \phi+\vec w\|_X^2
+o(c^2+\|a\vec \phi+bJ\vec \phi+\vec w\|_X^2) \\
&\le -k_1 c^2+k_3\|\vec w\|_X^2+o(\|\vec v\|_X^2).
\end{align*}
Moreover, we have 
$$\left|\frac{c}{\|J\vec \phi\|_{X}^2}
\dual{S_{\omega}''(\vec \phi)\vec w}{J^2(1-\Delta)\vec \phi}\right|
\le k|c|\|\vec w\|_X
\le \frac{k_1}{2}c^2+k_4\|\vec w\|_X^2.$$
Thus, we have 
\begin{equation}\label{tay22}
-\sgn \Lambda (\vec u) \cdot \mathcal{P}(\vec u)
\ge \frac{k_1}{2} c^2-k_5\|\vec w\|_{X}^2-o(\|\vec v\|_{X}^2)
\end{equation}
with some constant $k_5>0$. 
By \eqref{tay21} and \eqref{tay22}, we have 
\begin{equation}\label{tay23}
E(\vec u)-E(\vec \phi)
-k^*\sgn \Lambda (\vec u)\cdot \mathcal{P}(\vec u)
\ge k_6c^2+k_7\|\vec w\|_{X}^2-o(\|\vec v\|_{X}^2),
\end{equation}
where we put $k^*={k_0}/(4k_5)$, 
$k_6=k^*k_1/2$ and $k_7={k_0}/{4}$. 
Finally, since $\|\vec v\|_X=\|M(\vec u)-\varphi_{\omega}\|_X<\varepsilon_0$, 
it follows from \eqref{cvw2} that 
the right hand side of \eqref{tay23} is non-negative, 
if $\varepsilon_0$ is sufficiently small. 
This completes the proof. 
\end{proof}

\begin{lemma}\label{lem:data}
There exist $\lambda_1>0$ and a continuous curve 
$(-\lambda_1,\lambda_1)\ni \lambda\mapsto \vec \phi_{\lambda}\in X_{\rm rad}$ 
such that $\vec \phi_{0}=\vec \phi$ and 
$$E(\vec \phi_{\lambda})<E(\vec \phi), \quad 
Q(\vec \phi_{\lambda})=Q(\vec \phi), \quad 
\lambda \mathcal{P}(\vec \phi_{\lambda})<0$$
for $0<|\lambda|<\lambda_1$. 
\end{lemma}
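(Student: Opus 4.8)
The goal is to construct, near the semi-trivial standing wave $\vec\phi=(0,\varphi_{\omega})$ (with $\kappa>0$, $\gamma=1$), a one-parameter family $\vec\phi_{\lambda}$ that stays on the charge level $Q(\vec\phi_{\lambda})=Q(\vec\phi)$, strictly lowers the energy, and makes the auxiliary quantity $\mathcal{P}$ have the sign of $-\lambda$. The natural ansatz is to move in the direction $\vec\psi=(\varphi_{\omega},0)/\|\varphi_{\omega}\|_{L^2}$ which spans the extra kernel element of $S_{\omega}''(\vec\phi)$, then correct to stay on the charge sphere. Concretely, I would set
\begin{equation}\label{eq:ansatz}
\vec\phi_{\lambda}=\sqrt{1-\lambda^2}\,\vec\phi+\lambda\,\|\varphi_{\omega}\|_{L^2}\,\vec\psi=\bigl(\lambda\varphi_{\omega},\sqrt{1-\lambda^2}\,\varphi_{\omega}\bigr),
\end{equation}
which is manifestly radial, satisfies $\vec\phi_0=\vec\phi$, and has $\|\vec\phi_{\lambda}\|_H^2=\|\varphi_{\omega}\|_{L^2}^2\,(1+\lambda^2)\cdot$(appropriate constant) — wait, one must be careful with the weighting $\|Jv\|$ versus $\|v\|$; the cleaner choice is to pick the coefficients so that $Q(\vec\phi_{\lambda})=Q(\vec\phi)$ exactly, using that $Q$ is a quadratic form and $\vec\phi,\vec\psi$ are $H$-orthogonal. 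So choose $\vec\phi_\lambda = a(\lambda)\vec\phi + b(\lambda)\vec\psi$ with $a(0)=1$, $b(0)=0$, $a(\lambda)^2\|\vec\phi\|_H^2 + b(\lambda)^2 = \|\vec\phi\|_H^2$; e.g. $a(\lambda)=\sqrt{1-\lambda^2}$, $b(\lambda)=\lambda\|\vec\phi\|_H$.

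\textbf{Energy decrease.} For the energy, I would Taylor-expand $E(\vec\phi_{\lambda})$ around $\lambda=0$. Writing $\vec\phi_{\lambda}=\vec\phi+\vec v_{\lambda}$ with $\vec v_{\lambda}=\lambda\|\vec\phi\|_H\vec\psi+O(\lambda^2)\vec\phi$, and using $S_{\omega}'(\vec\phi)=0$, $Q(\vec\phi_{\lambda})=Q(\vec\phi)$, one gets $E(\vec\phi_{\lambda})-E(\vec\phi)=\tfrac12\langle S_{\omega}''(\vec\phi)\vec v_{\lambda},\vec v_{\lambda}\rangle+o(\lambda^2)$. Since $S_{\omega}''(\vec\phi)\vec\psi=\vec 0$ and $S_{\omega}''(\vec\phi)\vec\phi=O(1)$ but the $\vec\phi$-component of $\vec v_\lambda$ is only $O(\lambda^2)$, the quadratic term vanishes to order $\lambda^2$, so we must push to the cubic order. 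This is where the sign of $\kappa$ enters: the cubic term in $E$ along $\vec\psi=(\varphi_{\omega},0)/\|\varphi_{\omega}\|_{L^2}$ is $-\tfrac{\kappa}{3}\lambda^3\|\varphi_{\omega}\|_{L^2}^3\cdot\|\psi_1\|_{L^3}^3=-\tfrac{\kappa}{3}\lambda^3\|\varphi_{\omega}\|_{L^3}^3$ (up to the normalization), which for $\kappa>0$ is strictly negative for $\lambda>0$. To get a decrease for \emph{both} signs of $\lambda$ one should instead take $\lambda\mapsto|\lambda|$ in the displacement, or more cleanly use $\vec\phi_{\lambda}=(|\lambda|\varphi_{\omega},\sqrt{1-\lambda^2}\,\varphi_{\omega})$ so that $|u_1|u_1$ contributes $-\tfrac{\kappa}{3}|\lambda|^3\|\varphi_{\omega}\|_{L^3}^3<0$; the remaining cross term $-\tfrac{\gamma}{2}\Re\int u_1^2\overline{u_2}=-\tfrac12\lambda^2\sqrt{1-\lambda^2}\int\varphi_{\omega}^3$ is also negative and $O(\lambda^2)$, and the kinetic-plus-mass part is exactly balanced by $Q$ being fixed up to the $L^3$ terms of $u_2$, which only perturb at order $\lambda^2$ with a controllable sign after expanding $\sqrt{1-\lambda^2}$. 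Collecting: $E(\vec\phi_{\lambda})-E(\vec\phi)=-c|\lambda|^3+O(\lambda^4)<0$ for small $\lambda\ne 0$, with $c=\tfrac{\kappa}{3}\|\varphi_{\omega}\|_{L^3}^3/\|\varphi_{\omega}\|_{L^2}^3>0$; alternatively the $\lambda^2$ term may already be negative once everything is assembled, which would be even better.

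\textbf{Sign of $\mathcal{P}$.} For the last condition I would use formula \eqref{ift4}: since $\vec\phi_\lambda$ is close to $\vec\phi$ and $M(\vec\phi_\lambda)=\vec\phi_\lambda$ to leading order (the phase-adjustment $\Theta$ fixes $\vec\phi$ and varies smoothly, contributing only higher order because $(\vec\phi_\lambda,J\vec\phi)_X = 0$ by the reality of $\vec\phi_\lambda$ and $\vec\phi$), we have $\mathcal{P}(\vec\phi_\lambda)=\langle S_{\omega}'(\vec\phi_\lambda),\vec\psi\rangle + (\text{lower order})$, and $\Lambda(\vec\phi_\lambda)=(\vec\phi_\lambda,\vec\psi)_H=b(\lambda)=\lambda\|\vec\phi\|_H$. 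The computation \eqref{mmm} already done in the proof of Theorem~\ref{thm4} gives, with $v_1=\lambda\varphi_{\omega}/\|\varphi_{\omega}\|_{L^2}$, $v_2\approx(\sqrt{1-\lambda^2}-1)\varphi_{\omega}=O(\lambda^2)$, precisely $\langle S_{\omega}'(\vec\phi_\lambda),\vec\psi\rangle=-\kappa\Re\int|v_1|v_1\psi_1 - \Re\int v_1\overline{v_2}\psi_1 = -\kappa\lambda^2\|\varphi_{\omega}\|_{L^3}^3/\|\varphi_{\omega}\|_{L^2}^3 + O(\lambda^3)$ for $\lambda>0$ (and with $|\lambda|\lambda$ in place of $\lambda^2$ if we used $|\lambda|$ in the ansatz), so $\lambda\,\mathcal{P}(\vec\phi_\lambda)<0$ for small $\lambda\ne 0$. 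The map $\lambda\mapsto\vec\phi_\lambda$ is clearly continuous (indeed real-analytic) into $X_{\rm rad}$.

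\textbf{Main obstacle.} The delicate point is the \emph{simultaneity}: one needs the cubic-order energy decrease and the sign of $\mathcal{P}$ to be consistent under the \emph{same} normalization (in particular handling $\lambda<0$). The cleanest fix is to replace $\lambda$ by $|\lambda|$ in the $u_1$-slot — i.e. define $\vec\phi_\lambda=(|\lambda|\varphi_{\omega},\sqrt{1-\lambda^2}\,\varphi_{\omega})$ (up to normalization) — which is still continuous at $\lambda=0$, makes $E(\vec\phi_\lambda)-E(\vec\phi)<0$ for all small $\lambda\ne 0$ by the $-c|\lambda|^3$ term (together with the negative $O(\lambda^2)$ cross term), and makes $\mathcal{P}(\vec\phi_\lambda)$ carry the sign $-\,\mathrm{sgn}(\lambda)$ because $\Lambda(\vec\phi_\lambda)=\lambda\|\vec\phi\|_H$ retains the sign of $\lambda$ while $\langle S_\omega'(\vec\phi_\lambda),\vec\psi\rangle=-\kappa|\lambda|^2\|\varphi_\omega\|_{L^3}^3/\|\varphi_\omega\|_{L^2}^3+o(\lambda^2)<0$. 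Thus $\lambda\mathcal{P}(\vec\phi_\lambda)=\lambda\cdot(\text{negative})<0$ for $\lambda>0$; for $\lambda<0$ one checks $\Lambda<0$ and $\mathcal{P}<0$ again give the product negative — wait, that gives $\lambda\mathcal P>0$. So in fact one should keep the \emph{signed} $\lambda$ in the $u_1$-slot after all, accepting that the energy decrease then needs the $O(\lambda^2)$ cross-term $-\tfrac12\lambda^2\sqrt{1-\lambda^2}\int\varphi_\omega^3<0$ to dominate, which it does since that term is negative and the only competing positive contributions are $O(\lambda^4)$; the cubic $-\tfrac{\kappa}{3}\lambda^3\|\psi_1\|_{L^3}^3$ is then a lower-order correction of indefinite sign but harmless. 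Verifying carefully that the assembled $O(\lambda^2)$ coefficient of $E(\vec\phi_\lambda)-E(\vec\phi)$ is strictly negative, and that the $\mathcal{P}$-computation survives the $\Theta$-correction to the needed order, is the core of the argument; everything else is routine Taylor expansion.
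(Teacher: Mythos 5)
Your ansatz is essentially the paper's: the authors take $\vec \phi_{\lambda}=\vec \phi+\lambda \vec \psi+\sigma(\lambda)\vec \phi$ with $\sigma(\lambda)=(1-\lambda^2Q(\vec \psi)/Q(\vec \phi))^{1/2}-1$, which in the concrete setting of Theorem \ref{thm4} is exactly your curve $(\lambda\varphi_{\omega},\sqrt{1-\lambda^2}\,\varphi_{\omega})$ up to rescaling $\lambda$. Your treatment of the sign of $\mathcal{P}$ (signed $\lambda$ in the first slot, $\dual{S_{\omega}'(\vec \phi_{\lambda})}{\vec \psi}\approx -\kappa|\lambda|\lambda\,\|\varphi_{\omega}\|_{L^3}^3/\|\varphi_{\omega}\|_{L^2}^3$, second term of \eqref{ift4} of order $\lambda^3$) is also correct in substance. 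But there are two genuine problems.

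First, the place where your argument ends up is wrong on the energy decrease. Your final position is that the $O(\lambda^2)$ coefficient of $E(\vec \phi_{\lambda})-E(\vec \phi)$ is strictly negative because of the cross term $-\tfrac12\lambda^2\sqrt{1-\lambda^2}\int\varphi_{\omega}^3$, the ``only competing positive contributions'' being $O(\lambda^4)$. That is false: the term $-\tfrac13\|u_2\|_{L^3}^3=-\tfrac13(1-\lambda^2)^{3/2}\|\varphi_{\omega}\|_{L^3}^3$ contributes $+\tfrac12\lambda^2\|\varphi_{\omega}\|_{L^3}^3$, which cancels the cross term exactly at $\gamma=1$. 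This cancellation is forced: since $Q$ is preserved, $E(\vec \phi_{\lambda})-E(\vec \phi)=\tfrac12\dual{S_{\omega}''(\vec \phi)\vec v_{\lambda}}{\vec v_{\lambda}}+O(|\lambda|^3)$ and $\vec v_{\lambda}$ is $\lambda\|\varphi_{\omega}\|_{L^2}\vec \psi$ plus an $O(\lambda^2)$ multiple of $\vec \phi$, with $S_{\omega}''(\vec \phi)\vec \psi=\vec 0$; so the quadratic term is $O(\lambda^4)$ and \emph{cannot} produce the decrease. The decrease is genuinely cubic, $-\tfrac{\kappa}{3}|\lambda|^3\|\varphi_{\omega}\|_{L^3}^3/\|\varphi_{\omega}\|_{L^2}^3+O(\lambda^4)$ (note $\|u_1\|_{L^3}^3=|\lambda|^3\|\varphi_{\omega}\|_{L^3}^3$ is negative in $E$ for \emph{both} signs of $\lambda$, so no $|\lambda|$-in-the-slot trick is needed), and this is precisely where $\kappa>0$ enters. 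You had this right in your second paragraph and then talked yourself out of it; the verification you propose as ``the core of the argument'' would fail.

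Second, Lemma \ref{lem:data} sits inside the proof of the \emph{abstract} Proposition \ref{prop4}: $\vec \phi$ and $\vec \psi$ there are arbitrary objects satisfying hypotheses (i)--(iii), not specifically $(0,\varphi_{\omega})$ and $(\varphi_{\omega},0)/\|\varphi_{\omega}\|_{L^2}$. A proof by explicit computation with $\varphi_{\omega}$ does not prove the stated lemma. The paper avoids all of the above by writing $S_{\omega}(\vec \phi_{\lambda})-S_{\omega}(\vec \phi)=\int_0^{\lambda}\dual{S_{\omega}'(\vec \phi_s)}{\vec \psi+\sigma'(s)\vec \phi}\,ds$ and feeding assumption (iii) directly into the integrand, which gives $\sgn(s)\dual{S_{\omega}'(\vec \phi_s)}{\vec \psi}\le -k_1s^2+o(s^2)$ (the $\vec z$-term is $O(s^4)$ since $\sigma(s)=O(s^2)$), hence $S_{\omega}(\vec \phi_{\lambda})-S_{\omega}(\vec \phi)\le -k_1|\lambda|^3/3+o(\lambda^3)$ for both signs of $\lambda$ at once; the sign of $\mathcal{P}(\vec \phi_{\lambda})$ then comes for free from \eqref{tay22} rather than from a separate computation. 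I would recommend recasting your argument in that form: assumption (iii) is exactly the abstract encoding of the cubic term you computed.
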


\begin{proof}
For $\lambda$ close to $0$, we define 
$$\vec \phi_{\lambda}=\vec \phi+\lambda \vec \psi
+\sigma(\lambda)\vec \phi, \quad \sigma (\lambda)
=\left(1-\frac{Q(\vec \psi)}{Q(\vec \phi)}\lambda^2\right)^{1/2}-1.$$
Then, we have $Q(\vec \phi_{\lambda})=Q(\vec \phi)$, 
$\sigma (\lambda)=O(\lambda^2)$, $\sigma'(\lambda)=O(\lambda)$ and 
\begin{align*}
S_{\omega}(\vec \phi_{\lambda})-S_{\omega}(\vec \phi)
=\int_{0}^{\lambda}\frac{d}{ds}S_{\omega}(\vec \phi_{s})\,ds
=\int_{0}^{\lambda}\dual{S_{\omega}'(\vec \phi_{s})}
{\vec \psi+\sigma'(s)\vec \phi}\,ds.
\end{align*}
Here, by the assumption (iii) of Proposition \ref{prop4}, we have 
$$\sgn (s) \cdot \dual{S_{\omega}'(\vec \phi_{s})}{\vec \psi}\le -k_1s^2+o(s^2).$$
Moreover, since 
$S_{\omega}'(\vec \phi_{s})=S_{\omega}'(\vec \phi)
+S_{\omega}''(\vec \phi)(s\vec \psi+\sigma(s)\vec \phi)+o(s)=o(s)$, 
we have $\dual{S_{\omega}'(\vec \phi_{s})}{\sigma'(s)\vec \phi}=o(s^2)$. 
Thus, we have $S_{\omega}(\vec \phi_{\lambda})-S_{\omega}(\vec \phi)
\le -k_1|\lambda|^3/3+o(\lambda^3)$. 
Finally, by \eqref{tay22}, we have 
$\lambda \mathcal{P}(\vec \phi_{\lambda})
\le -k_1|\lambda|^3/2+o(\lambda^3)$. 
\end{proof}

\begin{proof}[Proof of Proposition \ref{prop4}]
Suppose that $G(\omega t)\vec \phi$ is stable. 
For $\lambda$ close to $0$, 
let $\vec \phi_{\lambda}\in X_{\rm rad}$ 
be the function given in Lemma \ref{lem:data}, 
and let $\vec u_{\lambda}(t)$ be the solution of \eqref{eq:co} 
with $\vec u_{\lambda}(0)=\vec \phi_{\lambda}$. 
Then, there exists $\lambda_0>0$ such that 
if $|\lambda|<\lambda_0$, 
then $\vec u_{\lambda}(t)\in \mathcal{N}_{\varepsilon_0}(\vec \phi)$ 
for all $t\ge 0$, where $\varepsilon_0$ is the positive constant 
given in Lemma \ref{lem:elp}. 
Moreover, by the definition \eqref{def:alam}, 
there exist positive constants $C_1$ and $C_2$ such that 
$|\mathcal{A}(\vec u)|\le C_1$ and $|\Lambda(\vec u)|\le C_2$ 
for all $\vec u\in \mathcal{N}_{\varepsilon_0}(\vec \phi)$. 
Let $-\lambda_0<\lambda<0$ and 
put $\delta_{\lambda}=E(\vec \phi)-E(\vec \phi_{\lambda})$. 
Since $\mathcal{P}(\vec \phi_{\lambda})>0$ and 
$t\mapsto \mathcal{P}(\vec u_{\lambda}(t))$ is continuous, 
by Lemma \ref{lem:elp} and by the conservation laws of $E$ and $Q$, 
we see that $\mathcal{P}(\vec u_{\lambda}(t))>0$ for all $t\ge 0$ and 
$$\delta_{\lambda}
=E(\vec \phi)-E(\vec u_{\lambda}(t))
\le -k^*\sgn \Lambda (\vec u_{\lambda}(t)) \cdot 
\mathcal{P}(\vec u_{\lambda}(t))
\le k^*C_2 \mathcal{P}(\vec u_{\lambda}(t))$$
for all $t\ge 0$. 
Moreover, by Lemma \ref{lem:ap}, we have 
$$\frac{d}{dt}\mathcal{A}(\vec u_{\lambda}(t))
=\mathcal{P}(\vec u_{\lambda}(t))
\ge \frac{\delta_{\lambda}}{k^*C_2}$$
for all $t\ge 0$, which implies that 
$\mathcal{A}(\vec u_{\lambda}(t))\to \infty$ as $t\to \infty$. 
This contradicts the fact that 
$|\mathcal{A}(\vec u_{\lambda}(t))|\le C_1$ for all $t\ge 0$. 
Hence, $G(\omega t)\vec \phi$ is unstable. 
\end{proof}

\section{Ground States} \label{sect5}

\subsection{Existence and Stability of Ground States}

In this subsection, we briefly recall the existence and stability 
of ground states for \eqref{sp}.  
We define 
\begin{align*}
&\|\vec u\|_{X_{\omega}}^2=\|\nabla \vec u\|_{H}^2+\omega \|\vec u\|_{H}^2, \\
&V(\vec u)=\kappa \|u_1\|_{L^3}^3+\|u_2\|_{L^3}^3
+\frac{3}{2}\gamma \Re \int_{\R^N}u_1^2\overline{u_2}\,dx, \\
&K_{\omega}(\vec u)=\|\vec u\|_{X_{\omega}}^2-V(\vec u)
\end{align*}
for $\vec u\in X$. Then the action $S_{\omega}$ associated with \eqref{sp} is written as 
$$S_{\omega}(\vec u)=\frac{1}{2}\|\vec u\|_{X_{\omega}}^2-\frac{1}{3}V(\vec u).$$
Remark that for $\vec{u}\in X$ satisfying $K_\omega(\vec{u})=0$, one has
\begin{align}\label{qq}
S_\omega(\vec{u})= \frac{1}{6} \| \vec{u}\|_{X_\omega}^2.
\end{align}
Moreover, we define
\begin{align*}
&\mu(\omega)=\inf\{S_{\omega}(\vec u):
\vec u\in X, ~ K_{\omega}(\vec u)=0,~ \vec u\ne (0,0)\}, \\
&\mathcal{M}_{\omega}
=\{\vec \phi\in X:S_{\omega}(\vec \phi)=\mu(\omega),~ K_{\omega}(\vec \phi)=0\}.
\end{align*}

The following Lemma \ref{lem:comp} establishes the existence of a ground state 
for \eqref{sp}. Since it can be proved by the standard variational method 
(see \cite{BrLi,lio1,lio2,wil} and also \cite{CO,pom}), we omit the proof. 

\begin{lemma}\label{lem:comp}
Let $\kappa\in \R$, $\gamma>0$ and $\omega>0$. 
If $\{\vec u_n\}\subset X$ satisfies 
$S_{\omega}(\vec u_n)\to \mu(\omega)$ and $K_{\omega}(\vec u_n)\to 0$, 
then there exist a sequence $\{y_n\}\subset \R^N$ 
and $\vec \phi\in \mathcal{M}_{\omega}$ such that 
$\{\tau_{y_n}\vec u_n\}$ has a subsequence that converges to $\vec \phi$ 
strongly in $X$. Moreover, 
$\mathcal{M}_{\omega}=\mathcal{G}_{\omega}$ and $\mu(\omega)=d(\omega)$. 
As a consequence, the set $\mathcal{G}_\omega$ is not empty.
\end{lemma}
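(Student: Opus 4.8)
The plan is to prove Lemma \ref{lem:comp} by the concentration-compactness method of Lions, following the standard scheme adapted to the constrained minimization problem defining $\mu(\omega)$. First I would establish the basic structural facts: that $\mu(\omega)>0$ and that any minimizing sequence is bounded in $X$. Boundedness is immediate from \eqref{qq}, since $K_\omega(\vec u_n)\to 0$ and $S_\omega(\vec u_n)\to\mu(\omega)$ force $\|\vec u_n\|_{X_\omega}^2$ to stay bounded; strict positivity of $\mu(\omega)$ follows from the Gagliardo--Nirenberg/Sobolev inequality bounding $V(\vec u)$ by $C\|\vec u\|_{X_\omega}^3$ (here $N\le 3$ is used so that $L^3$ embedding is subcritical), which gives $\|\vec u\|_{X_\omega}^2 = V(\vec u)\le C\|\vec u\|_{X_\omega}^3$, hence a uniform lower bound on $\|\vec u\|_{X_\omega}$ for any admissible $\vec u\ne 0$, and then \eqref{qq} again.

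Next I would apply the concentration-compactness lemma to the sequence of densities $\rho_n = |\nabla\vec u_n|^2 + \omega|\vec u_n|^2$, which have bounded total mass. One must rule out vanishing and dichotomy. Vanishing is excluded because if $\vec u_n\to 0$ in $L^3$, then $V(\vec u_n)\to 0$ and $K_\omega(\vec u_n)\to 0$ would force $\|\vec u_n\|_{X_\omega}\to 0$, contradicting the uniform lower bound just established. Dichotomy is excluded by the usual subadditivity argument: splitting $\vec u_n$ into two pieces with disjoint supports (via cutoff functions), one shows $\mu(\omega)$ would be $\ge \mu(\omega_1)+\mu(\omega_2)$-type inequality; combined with the scaling behavior of $S_\omega$ and $K_\omega$ under the natural rescaling that restores the constraint $K_\omega=0$, strict subadditivity $\mu(\omega)<\mu^{(1)}+\mu^{(2)}$ for any nontrivial splitting of the mass yields a contradiction. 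This is the step I expect to be the main obstacle: one must carefully track how the constraint $K_\omega(\vec v)=0$ interacts with the splitting, typically by first projecting each piece back onto the constraint manifold via a dilation $\vec v\mapsto \vec v(\cdot/\lambda)$ or a scalar multiple $t\vec v$, verifying that the value of $S_\omega$ changes only by a controlled amount, and exploiting the strict monotonicity/homogeneity that makes the function $\lambda\mapsto \mu(\omega)$ (or the associated level) strictly subadditive.

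Having ruled out vanishing and dichotomy, compactness holds: there is a sequence $\{y_n\}\subset\R^N$ such that $\tau_{y_n}\vec u_n$ is tight, hence along a subsequence converges weakly in $X$ and strongly in $L^3_{\rm loc}$, and by tightness strongly in $L^3(\R^N)$, to some limit $\vec\phi$. Then $V(\tau_{y_n}\vec u_n)\to V(\vec\phi)$, so from $K_\omega(\vec u_n)\to 0$ and weak lower semicontinuity of $\|\cdot\|_{X_\omega}$ one gets $K_\omega(\vec\phi)\le 0$ with $\|\vec\phi\|_{X_\omega}^2\ge V(\vec\phi)$; if $K_\omega(\vec\phi)<0$ a rescaling would produce an admissible competitor with action strictly below $\mu(\omega)$, impossible, so $K_\omega(\vec\phi)=0$ and $\vec\phi\ne 0$. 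Lower semicontinuity then forces $\|\vec\phi\|_{X_\omega}=\lim\|\tau_{y_n}\vec u_n\|_{X_\omega}$, which upgrades weak convergence to strong convergence in $X$, and $S_\omega(\vec\phi)=\mu(\omega)$, so $\vec\phi\in\mathcal{M}_\omega$.

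Finally I would identify $\mathcal{M}_\omega$ with $\mathcal{G}_\omega$ and $\mu(\omega)$ with $d(\omega)$ by the standard Lagrange-multiplier argument: any $\vec\phi\in\mathcal{M}_\omega$ satisfies $S_\omega'(\vec\phi)=\nu K_\omega'(\vec\phi)$ for some $\nu$, and pairing with $\vec\phi$ together with $K_\omega(\vec\phi)=0$ and the homogeneity of $V$ forces $\nu=0$, so $\vec\phi\in\mathcal{A}_\omega$; conversely every $\vec v\in\mathcal{A}_\omega$ satisfies $K_\omega(\vec v)=\dual{S_\omega'(\vec v)}{\vec v}=0$, hence is admissible in the definition of $\mu(\omega)$, giving $\mu(\omega)\le S_\omega(\vec v)$ for all such $\vec v$ and thus $\mu(\omega)\le d(\omega)$; the reverse inequality $\mu(\omega)\ge d(\omega)$ holds because any minimizer lies in $\mathcal{A}_\omega$. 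Combining these yields $\mu(\omega)=d(\omega)$ and $\mathcal{M}_\omega=\mathcal{G}_\omega$, and non-emptiness of $\mathcal{G}_\omega$ follows since the minimizing sequence (e.g. a minimizing sequence for $d(\omega)$, which automatically satisfies $K_\omega=0$) produces $\vec\phi$ as above.
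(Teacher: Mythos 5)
Your outline follows exactly the standard concentration--compactness variational argument that the paper itself invokes for this lemma (it cites Brezis--Lieb, Lions and Willem and omits the details), so you are on the same route as the authors. The steps you sketch --- boundedness and strict positivity of $\mu(\omega)$ via the identity behind \eqref{qq} and Gagliardo--Nirenberg, exclusion of vanishing and dichotomy, recovery of the constraint $K_{\omega}(\vec \phi)=0$ for the weak limit by rescaling, and the Lagrange-multiplier identification $\mathcal{M}_{\omega}=\mathcal{G}_{\omega}$ with vanishing multiplier --- are all correct as described.
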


Next, we consider the stability of ground states. 
By the scale invariance of \eqref{sp}, we see that 
$d(\omega)=\omega^{3-N/2}d(1)$ for all $\omega>0$. 
Since $N\le 3$ and $d(1)>0$, we have $d''(\omega)>0$ for all $\omega>0$. 
Using this fact and Lemma \ref{lem:comp}, 
the following Proposition \ref{prop5} can be proved 
by the method of Shatah \cite{sha} (see also \cite{CO}). 
Since it is standard, we omit the proof. 

\begin{proposition}\label{prop5}
Let $\kappa\in \R$ and $\gamma>0$. For any $\omega>0$, 
the set $\mathcal{G}_{\omega}$ is stable in the following sense. 
For any $\varepsilon>0$ there exists $\delta>0$ such that 
if $\vec u_0\in X$ satisfies 
$\mathrm{dist}\,(\vec u_0,\mathcal{G}_{\omega})<\delta$, 
then the solution $\vec u(t)$ of \eqref{eq:co} with $\vec u(0)=\vec u_0$ 
exists for all $t\ge 0$, and satisfies 
$\mathrm{dist}\,(\vec u(t),\mathcal{G}_{\omega})<\varepsilon$ 
for all $t\ge 0$, where we put 
$$\mathrm{dist}\,(\vec u, \mathcal{G}_{\omega})
=\inf\{\|\vec u-\vec \phi\|_X: \vec \phi\in \mathcal{G}_{\omega}\}.$$
\end{proposition}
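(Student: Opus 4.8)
The plan is to run the classical variational stability argument of Shatah \cite{sha} (see also \cite{CO}): Lemma~\ref{lem:comp} supplies the compactness, and the scaling identity $d(\omega)=\omega^{3-N/2}d(1)$ together with $d(1)>0$, $N\le 3$ (hence $d''(\omega)>0$), supplies the monotonicity that pins down the frequency.

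First I would record two elementary facts. Since $\mathcal{G}_{\omega}=\mathcal{M}_{\omega}$ by Lemma~\ref{lem:comp}, every $\vec\phi\in\mathcal{G}_{\omega}$ satisfies $K_{\omega}(\vec\phi)=0$, so $\|\vec\phi\|_{X_{\omega}}^2=6S_{\omega}(\vec\phi)=6d(\omega)$ by \eqref{qq}; combining this with the Pohozaev identity for \eqref{sp} shows that $\|\nabla\vec\phi\|_{H}^2$, $\|\vec\phi\|_{H}^2$ and $V(\vec\phi)$ are the same for all $\vec\phi\in\mathcal{G}_{\omega}$. In particular $Q$ and $E$ are constant on $\mathcal{G}_{\omega}$; write $q_{\omega}=Q(\vec\phi)=d'(\omega)$ and $e_{\omega}=E(\vec\phi)=d(\omega)-\omega q_{\omega}$, and note that $e_{\omega}=(N/2-2)d(\omega)<0$ since $N\le 3$. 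Second, by the Gagliardo--Nirenberg inequality $V(\vec u)\le C\|\nabla\vec u\|_{H}^{N/2}\|\vec u\|_{H}^{3-N/2}$, whose exponent $N/2$ is $<2$ precisely because $N\le 3$, one gets $E(\vec u)\ge \tfrac14\|\nabla\vec u\|_{H}^2-C(q)$ on each charge level $\{Q(\vec u)=q\}$, so $E$ is bounded below and $\|\nabla\cdot\|_{H}$-coercive there; combined with the conservation laws and global well-posedness in $X$, this also gives the needed a priori $X$-bounds.

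The main step is the variational characterization
$$\mathcal{G}_{\omega}=\bigl\{\vec u\in X:Q(\vec u)=q_{\omega},\ E(\vec u)=m(q_{\omega})\bigr\},\qquad m(q):=\inf\bigl\{E(\vec u):\vec u\in X,\ Q(\vec u)=q\bigr\},$$
with $m(q_{\omega})=e_{\omega}$, together with the precompactness (up to translations) in $X$ of any minimizing sequence for $m(q_{\omega})$, its limits lying in $\mathcal{G}_{\omega}$. Existence of a minimizer and this precompactness I would obtain by the standard concentration--compactness argument for $m(q_{\omega})$: boundedness is the a priori bound above; vanishing is excluded because $V(\vec u_n)\ge c>0$ (forced by $E(\vec u_n)\to e_{\omega}<0$); and dichotomy is excluded by the strict sub-additivity $m(q)<m(q_1)+m(q_2)$ for $q=q_1+q_2$, which follows from the scaling relation $m(q)=-c_0\,q^{(6-N)/(4-N)}$ with $c_0>0$ and exponent $>1$ for $N\le 3$. (Equivalently one may invoke Lemma~\ref{lem:comp}, since $\mathcal{M}_{\omega}=\mathcal{G}_{\omega}$.) The identification of $\omega$ is where $d''>0$ enters: a minimizer $\vec\psi$ of $m(q_{\omega})$ satisfies $S_{\tilde\omega}'(\vec\psi)=0$ for some Lagrange multiplier $\tilde\omega$, a Nehari--Pohozaev computation (using $N\le 3$) forces $\tilde\omega>0$ so $\vec\psi\in\mathcal{A}_{\tilde\omega}$ and $S_{\tilde\omega}(\vec\psi)\ge d(\tilde\omega)$, whence
$$m(q_{\omega})=E(\vec\psi)=S_{\tilde\omega}(\vec\psi)-\tilde\omega q_{\omega}\ge d(\tilde\omega)-\tilde\omega q_{\omega}\ge d(\omega)-\omega q_{\omega}=E(\vec\phi_{\omega})\ge m(q_{\omega}),$$
because $\eta\mapsto d(\eta)-\eta q_{\omega}$ is strictly convex with a strict minimum at $\eta=\omega$ (since $d'(\omega)=q_{\omega}$ and $d''>0$). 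Equality throughout forces $\tilde\omega=\omega$ and $S_{\omega}(\vec\psi)=d(\omega)$, i.e. $\vec\psi\in\mathcal{G}_{\omega}$ and $m(q_{\omega})=e_{\omega}$; the reverse inclusion is immediate since each $\vec\phi\in\mathcal{G}_{\omega}$ has $Q(\vec\phi)=q_{\omega}$ and $E(\vec\phi)=e_{\omega}=m(q_{\omega})$.

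Granting this, the stability of $\mathcal{G}_{\omega}$ follows by the usual contradiction argument. If $\mathcal{G}_{\omega}$ were not stable, there would exist $\varepsilon_0>0$, data $\vec u_{0,n}\in X$ with $\mathrm{dist}(\vec u_{0,n},\mathcal{G}_{\omega})\to 0$, and times $t_n\ge 0$ with, by continuity of the flow in $X$ and of $\vec u\mapsto\mathrm{dist}(\vec u,\mathcal{G}_{\omega})$, $\mathrm{dist}(\vec u_n(t_n),\mathcal{G}_{\omega})=\varepsilon_0$, where $\vec u_n$ solves \eqref{eq:co} with $\vec u_n(0)=\vec u_{0,n}$. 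Put $\vec v_n=\vec u_n(t_n)$. Since $Q,E$ are constant on $\mathcal{G}_{\omega}$ and continuous on $X$, we get $Q(\vec u_{0,n})\to q_{\omega}$, $E(\vec u_{0,n})\to e_{\omega}$, hence by conservation $Q(\vec v_n)\to q_{\omega}$, $E(\vec v_n)\to e_{\omega}$, and $\{\vec v_n\}$ is bounded in $X$. Setting $c_n=(q_{\omega}/Q(\vec v_n))^{1/2}\to 1$ and $\vec w_n=c_n\vec v_n$ gives $Q(\vec w_n)=q_{\omega}$ and $E(\vec w_n)\to e_{\omega}=m(q_{\omega})$, so $\{\vec w_n\}$ is a minimizing sequence for $m(q_{\omega})$; by the main step it converges in $X$, up to translations, to some $\vec\phi_*\in\mathcal{G}_{\omega}$. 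As $c_n\to 1$ and $\mathcal{G}_{\omega}$ is translation invariant, $\mathrm{dist}(\vec v_n,\mathcal{G}_{\omega})\to 0$, contradicting $\mathrm{dist}(\vec v_n,\mathcal{G}_{\omega})=\varepsilon_0$. The main obstacle is the variational characterization of $\mathcal{G}_{\omega}$ as the set of minimizers of $E$ at fixed charge $q_{\omega}$ --- concretely, identifying the Lagrange multiplier of the constrained problem with the prescribed $\omega$ (the point where $d''>0$, i.e. strict monotonicity of $\omega\mapsto d'(\omega)=q_{\omega}$, is essential) and upgrading the weak convergence of minimizing sequences to strong convergence; everything else (Pohozaev, the subcritical Gagliardo--Nirenberg bound, well-posedness, conservation laws, and the scaling of $m$) is routine.
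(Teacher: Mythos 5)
Your proposal is correct in outline, and it belongs to the same family of arguments the paper has in mind: the paper omits the proof of Proposition \ref{prop5} altogether, saying only that it follows from Lemma \ref{lem:comp} together with $d''(\omega)>0$ ``by the method of Shatah \cite{sha} (see also \cite{CO}).'' Where you genuinely diverge is in how the compactness is obtained. The paper's intended route keeps the functional $K_\omega$ in play: for the exit-time sequence $\vec v_n$ one has $S_\omega(\vec v_n)\to d(\omega)$, one shows $K_\omega(\vec v_n)\to 0$ (this is where a rescaling/convexity step and $d''>0$ enter), and then Lemma \ref{lem:comp} directly yields convergence to $\mathcal{G}_\omega=\mathcal{M}_\omega$. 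You instead recharacterize $\mathcal{G}_\omega$ as the set of minimizers of $E$ on $\{Q=q_\omega\}$ and prove precompactness of minimizing sequences for $m(q_\omega)$ from scratch by concentration--compactness, using the scaling law $m(q)=m(1)\,q^{(6-N)/(4-N)}$ for strict subadditivity; the convexity $d''>0$ is then used only in the clean Lagrange-multiplier identification $\tilde\omega=\omega$. Your version is more self-contained and makes the role of $d''>0$ transparent, at the cost of redoing a compactness argument that Lemma \ref{lem:comp} already encodes; the paper's version is shorter but hides the step $K_\omega(\vec v_n)\to0$. One caveat: your parenthetical claim that the compactness for $m(q_\omega)$ can ``equivalently'' be read off from Lemma \ref{lem:comp} is too quick. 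A minimizing sequence for $m(q_\omega)$ does satisfy $S_\omega(\vec w_n)\to d(\omega)$, but the constraints $Q(\vec w_n)=q_\omega$ and $E(\vec w_n)\to e_\omega$ do not by themselves force $K_\omega(\vec w_n)\to 0$; one must first pass to the Nehari rescalings $\lambda_n\vec w_n$ with $K_\omega(\lambda_n\vec w_n)=0$ and argue $\lambda_n\to 1$. Since your primary argument does not rely on that parenthetical, this is a remark rather than a gap.
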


\subsection{Preliminaries from Elementary Geometry}

In this section, we explain some basic geometric properties 
concerning the line and the ellipse defined by \eqref{eq:ab}.
In the proof of Theorem \ref{thm6}, one has to compare, 
for a given $(\alpha,\beta)\in \mathcal{S}_{\kappa,\gamma}$, 
the quantities $\alpha^2+\beta^2$ and $1$.
This is the purpose of Lemmas \ref{lem31} and \ref{lem32}. 

\begin{lemma}\label{lem34}
Let $\gamma>0$ and $0<r\le 1$, and put 
$$B=\{(x,y)\in ]0,\infty[^2:\gamma x^2+2y^2=2y,~ x^2+y^2=r^2\}.$$
\par \noindent $({\rm 1})$ \hspace{1mm}
If $0<r<1$, then $B$ consists of one point. 
\par \noindent $({\rm 2})$ \hspace{1mm}
If $r=1$ and $0<\gamma<1$, then 
$B=\{(2\sqrt{1-\gamma}/(2-\gamma), \gamma/(2-\gamma))\}$. 
\par \noindent $({\rm 3})$ \hspace{1mm}
If $r=1$ and $\gamma\ge 1$, then $B$ is empty. 
\end{lemma}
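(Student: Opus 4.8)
The plan is to intersect the ellipse $\gamma x^2 + 2y^2 = 2y$ with the circle $x^2 + y^2 = r^2$ by eliminating $x^2$: from the circle $x^2 = r^2 - y^2$, and substituting into the ellipse equation gives $\gamma(r^2 - y^2) + 2y^2 = 2y$, i.e. $(2-\gamma)y^2 - 2y + \gamma r^2 = 0$, a quadratic in $y$ alone. Since $N \le 3$ forces nothing here and $0 < \gamma$, I note $2 - \gamma$ may be positive, zero, or negative, so I would first treat the degenerate case $\gamma = 2$ separately (then the equation is linear, $y = r^2$, giving at most one point, and one checks $x^2 = r^2 - r^4 = r^2(1-r^2) > 0$ iff $r < 1$, consistent with parts (1) and (3)). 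For $\gamma \ne 2$ the discriminant is $4 - 4\gamma(2-\gamma)r^2 = 4(1 - \gamma(2-\gamma)r^2)$, and the two roots are $y_\pm = \dfrac{1 \pm \sqrt{1 - \gamma(2-\gamma)r^2}}{2-\gamma}$.

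The core of the argument is then a careful sign/range analysis: for each admissible root $y$ I must verify simultaneously that $0 < y < 1$ (so the point lies on the correct arc of the ellipse, recalling $\mathcal{S}_{\kappa,\gamma} \subset \{0 < y < 1\}$ and more precisely $0 \le y \le 1$ on the ellipse) and that $x^2 = r^2 - y^2 > 0$, i.e. $y < r$ (strict, since we want $x > 0$ in the open first quadrant). For part (1), $0 < r < 1$: I would show $\gamma(2-\gamma) \le 1$ always (it's $1 - (1-\gamma)^2$), so the discriminant is $\ge 1 - r^2 > 0$ and real; then I'd argue exactly one of $y_+, y_-$ lies in $(0,r)$. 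A clean way is to consider the quadratic $f(y) = (2-\gamma)y^2 - 2y + \gamma r^2$ and evaluate $f(0) = \gamma r^2 > 0$ and $f(r) = (2-\gamma)r^2 - 2r + \gamma r^2 = 2r^2 - 2r = 2r(r-1) < 0$; hence $f$ has exactly one root in $(0,r)$ regardless of the sign of the leading coefficient, giving exactly one point of $B$. For part (2), $r = 1$, $0 < \gamma < 1$: now $f(0) = \gamma > 0$ and $f(1) = 2\cdot 1 \cdot 0 = 0$, so $y = 1$ is a root (corresponding to $x = 0$, excluded), and the other root is $y = \gamma/(2-\gamma)$ by Vieta ($y_+ y_- = \gamma r^2/(2-\gamma) = \gamma/(2-\gamma)$), which lies in $(0,1)$ since $0<\gamma<1$; then $x^2 = 1 - \gamma^2/(2-\gamma)^2 = ((2-\gamma)^2 - \gamma^2)/(2-\gamma)^2 = (4 - 4\gamma)/(2-\gamma)^2$, so $x = 2\sqrt{1-\gamma}/(2-\gamma) > 0$, giving the stated unique point. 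For part (3), $r = 1$, $\gamma \ge 1$: again $y = 1$ is a root of $f$ with $x = 0$ (excluded), and the other root $y = \gamma/(2-\gamma)$; I must show no first-quadrant point survives. If $\gamma = 2$ this was handled ($y = 1$, $x = 0$). If $\gamma > 2$, then $\gamma/(2-\gamma) < 0$, not admissible. If $1 \le \gamma < 2$, then $y = \gamma/(2-\gamma) \ge 1$, so $x^2 = 1 - y^2 \le 0$, leaving no point with $x > 0$ (equality only at $\gamma = 1$, $y = 1$, $x = 0$). Hence $B = \emptyset$.

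The main obstacle is simply the bookkeeping: keeping straight, across the three sign regimes of $2 - \gamma$, which of the two roots $y_\pm$ is the "good" one and confirming both inequalities $0 < y < 1$ and $y < r$ hold exactly once. The $f(0)$--$f(r)$ sign trick sidesteps most of this for part (1) by an intermediate-value argument that is blind to the leading coefficient's sign, which I expect to be the cleanest route; the only genuinely delicate point is the boundary behavior at $r = 1$ where one root degenerates to the excluded vertex $(0,1)$, so the count drops and one must chase the surviving root explicitly as in parts (2) and (3).
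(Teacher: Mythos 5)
Your proposal is correct and follows essentially the same route as the paper: eliminate $x^2$ to get the quadratic $(2-\gamma)y^2-2y+\gamma r^2=0$ and count roots by a sign change (the paper evaluates at $y=0$ and $y=1$, you at $y=0$ and $y=r$, which is the same trick and if anything handles the existence of the root with $x^2=r^2-y^2>0$ slightly more directly). Your explicit treatment of the degenerate case $\gamma=2$ and the Vieta computation for parts (2) and (3) just fills in what the paper dismisses as ``direct computations.''
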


\begin{proof}
First we prove (1). Let $0<r<1$. 
Recall that $\gamma x^2+2y^2=2y$ is an ellipse with vertices 
$(x,y)=(0,0)$, $(0,1)$ and $(\pm 1/\sqrt{2\gamma},1/2)$, 
and that $B\subset \{(x,y):0<y<1\}$. 
By the equations in $B$, we have $g(y):=(2-\gamma)y^2-2y+\gamma r^2=0$. 
Since $g(0)=\gamma r^2>0$ and $g(1)=\gamma (r^2-1)<0$, 
the equation $g(y)=0$ has only one solution in $]0,1[$. 
This proves (1). 
Parts (2) and (3) are obtained by direct computations. 
\end{proof}

\begin{lemma}\label{lem31}
Let $(\kappa,\gamma)\in \mathcal{J}_1\cup \mathcal{J}_2$. 
Then, $\alpha_{+}^2+\beta_{-}^2=1$ 
if and only if $(\kappa,\gamma)\in \mathcal{K}_3$. 
\end{lemma}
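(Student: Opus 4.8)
The plan is to reduce the condition $\alpha_+^2+\beta_-^2=1$ to a single scalar equation in $\beta$. The key observation is that every $(\alpha,\beta)\in\mathcal{S}_{\kappa,\gamma}$ satisfies $\gamma\alpha^2=2\beta(1-\beta)$ (the second equation in \eqref{eq:ab}) and $0<\beta<1$ (noted right after the definition of $\mathcal{S}_{\kappa,\gamma}$), so that
\[
\alpha^2+\beta^2-1=-\gamma^{-1}\bigl((2-\gamma)\beta^2-2\beta+\gamma\bigr).
\]
The quadratic $(2-\gamma)\beta^2-2\beta+\gamma$ has discriminant $4(1-\gamma)^2$ and roots $\beta=1$ and $\beta=\gamma/(2-\gamma)$; since $0<\beta<1$ excludes $\beta=1$ and forces $\gamma/(2-\gamma)<1$, i.e.\ $\gamma<1$, I conclude that a point of $\mathcal{S}_{\kappa,\gamma}$ lies on the unit circle if and only if $0<\gamma<1$ and $\beta=\gamma/(2-\gamma)$.

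Next I split on the region. If $(\kappa,\gamma)\in\mathcal{J}_1$ then $\gamma\ge 1$, so by the above $\alpha_+^2+\beta_-^2\ne 1$ (this is also immediate from Lemma \ref{lem34}(3)), while $\mathcal{K}_3\subset\{0<\gamma<1\}$ is disjoint from $\mathcal{J}_1$; hence the asserted equivalence holds vacuously here. So it remains to treat $(\kappa,\gamma)\in\mathcal{J}_2$, where $0<\gamma<1$, $\kappa>\sqrt{2\gamma(1-\gamma)}$, and therefore $D:=\kappa^2+2\gamma(\gamma-1)>0$. By Proposition \ref{prop1}(2), $\mathcal{S}_{\kappa,\gamma}=\{(\alpha_+,\beta_-),(\alpha_-,\beta_+)\}$ with $\beta_-<\beta_+$ (as $D>0$ and $\kappa>0$). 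Eliminating $\alpha$ through $\alpha=(1-\gamma\beta)/\kappa$ from the two equations defining $\mathcal{S}_{\kappa,\gamma}$ shows that $\beta_-$ and $\beta_+$ are exactly the two roots of $p(\beta):=(2\kappa^2+\gamma^3)\beta^2-2(\kappa^2+\gamma^2)\beta+\gamma$. Combined with the first paragraph, this gives $\alpha_+^2+\beta_-^2=1$ if and only if $\beta_-=\gamma/(2-\gamma)$.

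To finish I evaluate $p$ at $\beta=\gamma/(2-\gamma)$. Using the identity $\gamma^4+2\gamma^3-3\gamma^2-4\gamma+4=(\gamma-1)^2(\gamma+2)^2$, one obtains
\[
\frac{(2-\gamma)^2}{\gamma}\,p\!\left(\frac{\gamma}{2-\gamma}\right)
=4\kappa^2(\gamma-1)+(\gamma-1)^2(\gamma+2)^2
=(\gamma-1)\bigl(4\kappa^2+(\gamma-1)(\gamma+2)^2\bigr).
\]
Since $\gamma\ne 1$, this vanishes iff $4\kappa^2=(1-\gamma)(\gamma+2)^2$, i.e.\ (since $\kappa>0$) iff $\kappa=\frac12(\gamma+2)\sqrt{1-\gamma}=\kappa_c(\gamma)$, which is precisely the condition $(\kappa,\gamma)\in\mathcal{K}_3$ (and then $(\kappa,\gamma)\in\mathcal{J}_2$, because $\kappa_c(\gamma)>\sqrt{2\gamma(1-\gamma)}$). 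This already yields $\beta_-=\gamma/(2-\gamma)\Rightarrow(\kappa,\gamma)\in\mathcal{K}_3$. For the converse I still have to check that when $\kappa=\kappa_c(\gamma)$ the root $\gamma/(2-\gamma)$ of $p$ is the smaller one, $\beta_-$, rather than $\beta_+$: here $\sqrt{D}=\frac12(2-\gamma)\sqrt{1-\gamma}$ and $2\kappa^2+\gamma^3=\frac12(\gamma+1)(2-\gamma)^2$, so a direct substitution into the formula for $\beta_-$ gives $\beta_-=\gamma/(2-\gamma)$ (equivalently, the companion root equals $(2-\gamma)/(2\kappa^2+\gamma^3)$, and comparing it with $\gamma/(2-\gamma)$ reduces to $\gamma^2+\gamma<2$, which holds for $0<\gamma<1$).

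The steps that are routine but a bit lengthy are the elimination identifying $\beta_\pm$ as the roots of $p$ and the polynomial factorization $\gamma^4+2\gamma^3-3\gamma^2-4\gamma+4=(\gamma-1)^2(\gamma+2)^2$. The one place the argument could actually break, and which therefore deserves the most care, is the final bookkeeping: verifying that within $\mathcal{K}_3$ it is $\beta_-$, not $\beta_+$, that equals $\gamma/(2-\gamma)$, since the statement concerns the specific branch $(\alpha_+,\beta_-)$.
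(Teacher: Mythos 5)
Your proof is correct. It pivots on the same fact as the paper---a point of $\mathcal{S}_{\kappa,\gamma}$ on the unit circle must have $\beta=\gamma/(2-\gamma)$ and $0<\gamma<1$---but organizes the rest differently. The paper invokes Lemma \ref{lem34} (2), (3) to identify the unique intersection point $(2\sqrt{1-\gamma}/(2-\gamma),\gamma/(2-\gamma))$ of the ellipse and the unit circle, and then simply substitutes it into the line $\kappa x+\gamma y=1$ to read off $\kappa=\kappa_c(\gamma)$; you instead eliminate $\alpha$ through the line to get the quadratic $p(\beta)=(2\kappa^2+\gamma^3)\beta^2-2(\kappa^2+\gamma^2)\beta+\gamma$ whose roots are $\beta_\pm$, and evaluate and factor $p$ at $\gamma/(2-\gamma)$. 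The paper's substitution is shorter, but your route buys something in the converse direction: where the paper writes ``it is easy to see,'' you explicitly verify that for $\kappa=\kappa_c(\gamma)$ the value $\gamma/(2-\gamma)$ is the \emph{smaller} root $\beta_-$ rather than $\beta_+$, which is precisely the branch bookkeeping the statement about $(\alpha_+,\beta_-)$ requires, and you also record $\mathcal{K}_3\subset\mathcal{J}_2$. One cosmetic point: your step ``$\gamma/(2-\gamma)<1$ forces $\gamma<1$'' tacitly assumes $\gamma<2$; for $\gamma\ge 2$ the exclusion comes from $\beta>0$ instead---harmless here, since the $\mathcal{J}_1$ case is settled by Lemma \ref{lem34} (3) as you note and $\mathcal{J}_2$ already has $\gamma<1$.
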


\begin{proof}
Assume that $(\alpha_{+},\beta_{-})$ satisfies $\alpha_{+}^2+\beta_{-}^2=1$. 
Since $(\alpha_{+},\beta_{-})$ satisfies 
$\gamma \alpha_{+}^2+2\beta_{-}^2=2\beta_{-}$, 
it follows from (2) and (3) of Lemma \ref{lem34} that $0<\gamma<1$ and 
$(\alpha_{+},\beta_{-})=(2\sqrt{1-\gamma}/(2-\gamma), \gamma/(2-\gamma))$. 
Substituting this into $\kappa \alpha_{+}+\gamma \beta_{-}=1$, 
we have $\kappa=\kappa_c(\gamma)$. Thus, $(\kappa,\gamma)\in \mathcal{K}_3$. 
Conversely, it is easy to see that 
$\alpha_{+}^2+\beta_{-}^2=1$ if $(\kappa,\gamma)\in \mathcal{K}_3$. 
\end{proof}

\begin{lemma}\label{lem32}
If $(\kappa,\gamma)\in \mathcal{K}_1$, then $\alpha_{+}^2+\beta_{-}^2<1$. 
\end{lemma}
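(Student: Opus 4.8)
The plan is to exploit the explicit geometry of the line $\kappa x + \gamma y = 1$ intersecting the ellipse $\gamma x^2 + 2y^2 = 2y$, together with the monotonicity of $\kappa_c$. By Lemma \ref{lem34}(1), for any $0 < r < 1$ the set $B$ (the intersection of the ellipse with the circle of radius $r$) is a single point, and since the ellipse and the circle of radius $1$ intersect (for $0<\gamma<1$) in the single point from Lemma \ref{lem34}(2), the point $(\alpha_+,\beta_-)$ lies either inside, on, or outside the unit circle. Lemma \ref{lem31} already identifies exactly when it lies on the unit circle: precisely when $(\kappa,\gamma)\in\mathcal{K}_3$, i.e. $0<\gamma<1$ and $\kappa = \kappa_c(\gamma)$. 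So the remaining task is to decide, on the complement, which side of the unit circle we are on.

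First I would treat the case $\gamma \ge 1$. Here $\mathcal{K}_1$ contains $\{\kappa \le 0,\ \gamma > 1\}$ and $\{\kappa \ge 1,\ \gamma > 0\}$ — the latter overlapping $\gamma\ge 1$. From the ellipse equation $\gamma\alpha_+^2 + 2\beta_-^2 = 2\beta_-$ and $0<\beta_-<1$ (Lemma \ref{lem:gb1}'s proof, or directly from \eqref{eq:ab}), one gets $\gamma\alpha_+^2 = 2\beta_-(1-\beta_-) < 2\beta_-(1-\beta_-)$, hence $\alpha_+^2 \le 2\beta_-(1-\beta_-)/\gamma$. For $\gamma\ge1$ this gives $\alpha_+^2+\beta_-^2 \le 2\beta_- - 2\beta_-^2 + \beta_-^2 = 2\beta_- - \beta_-^2 = 1 - (1-\beta_-)^2 < 1$. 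That disposes of all of $\mathcal{K}_1$ with $\gamma\ge1$ cleanly and with no reference to the line at all.

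Next, the case $0 < \gamma < 1$ within $\mathcal{K}_1$, which by definition of $\mathcal{K}_1$ means $\kappa > \kappa_c(\gamma)$ (combining the $\kappa\ge1$ piece, using $\kappa_c(0)=1$ and monotonicity, with the $\gamma > \gamma_c(\kappa)$ piece, which is equivalent to $\kappa > \kappa_c(\gamma)$ since $\gamma_c = \kappa_c^{-1}$). Here I would use a continuity/monotonicity argument: fix $\gamma\in(0,1)$ and let $\kappa$ vary over $[\kappa_c(\gamma),\infty)$ (staying in $\mathcal{J}_1\cup\mathcal{J}_2$ so that $(\alpha_+,\beta_-)$ is defined and depends continuously on $\kappa$). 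At $\kappa=\kappa_c(\gamma)$ we have $\alpha_+^2+\beta_-^2=1$ by Lemma \ref{lem31}. The function $\kappa\mapsto \alpha_+(\kappa)^2+\beta_-(\kappa)^2$ is continuous and never equals $1$ for $\kappa>\kappa_c(\gamma)$ (again Lemma \ref{lem31}, since such $(\kappa,\gamma)\notin\mathcal{K}_3$), so it stays on one side of $1$; evaluating at large $\kappa$ — where $\alpha_+\to 0$ and $\beta_-\to \gamma^2/\gamma^3$-type limit, explicitly $(\alpha_0,\beta_0)\to(0,1/\gamma)$ as $\kappa\to\infty$, wait, more carefully $\alpha_\pm,\beta_\pm \to 0$ as $\kappa\to\infty$ from the $2\kappa^2$ denominator — shows the value tends to something $<1$, forcing $\alpha_+^2+\beta_-^2<1$ throughout.

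The main obstacle I anticipate is the bookkeeping in the last step: correctly identifying the asymptotics of $(\alpha_+,\beta_-)$ as $\kappa\to\infty$ and making sure the connectedness argument stays inside the region where $(\alpha_+,\beta_-)$ is well-defined and continuous (i.e. inside $\mathcal{J}_1\cup\mathcal{J}_2$, avoiding $\mathcal{J}_0$). An alternative that sidesteps asymptotics is to note that on the ellipse the relation $\alpha^2+\beta^2 = 2\beta - \beta^2 + (1-\tfrac{2}{\gamma})\,\gamma\alpha^2/\gamma$... — more cleanly, $\alpha^2+\beta^2 < 1 \iff \gamma\alpha^2 < \gamma(1-\beta^2) = \gamma(1-\beta)(1+\beta)$, and since $\gamma\alpha^2 = 2\beta(1-\beta)$ on the ellipse, this is $2\beta < \gamma(1+\beta)$, i.e. $\beta < \gamma/(2-\gamma)$. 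So the whole claim reduces to the inequality $\beta_- < \gamma/(2-\gamma)$ for $(\kappa,\gamma)\in\mathcal{K}_1$, which by Lemma \ref{lem31} holds iff $(\kappa,\gamma)\notin\mathcal{K}_3$ on the relevant side; this can then be checked by plugging the explicit formula for $\beta_-$ and comparing with $\kappa_c(\gamma)$ via elementary algebra, exactly the ``elementary computations'' style the paper favors. I would present the proof through this reduction to $\beta_- < \gamma/(2-\gamma)$, as it is the shortest and most robust route.
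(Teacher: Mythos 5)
Your main argument (the first two steps) is correct and, for the hard region, runs on the same engine as the paper's proof: continuity of $f(\kappa,\gamma)=\alpha_{+}^2+\beta_{-}^2-1$, absence of zeros in $\mathcal{K}_1$ via Lemma \ref{lem31}, and evaluation of the sign at a reference point. The paper does this in one stroke, observing that $\mathcal{K}_1$ is a \emph{connected} subset of $\mathcal{J}_1\cup\mathcal{J}_2$ and checking $f(0,\gamma)\to-1$ as $\gamma\to\infty$; you instead split off $\gamma\ge 1$ with a clean direct bound ($\gamma\alpha_{+}^2=2\beta_{-}(1-\beta_{-})$ gives $\alpha_{+}^2+\beta_{-}^2\le 1-(1-\beta_{-})^2<1$ --- note the stray ``$<$'' in your display should be ``$\alpha_+^2\le 2\beta_-(1-\beta_-)$'') and then run the continuity argument only along horizontal rays $\{\kappa>\kappa_c(\gamma)\}$ for fixed $\gamma\in\,]0,1[$, which indeed stay inside $\mathcal{J}_2$ because $\kappa_c(\gamma)>\sqrt{2\gamma(1-\gamma)}$, with $f\to-1$ as $\kappa\to\infty$. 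That is a perfectly valid, slightly more computational variant; the paper's global connectedness argument is shorter but requires one to accept that $\mathcal{K}_1$ is connected inside $\mathcal{J}_1\cup\mathcal{J}_2$, which your slicing avoids. One caution: the ``alternative'' you say you would actually present --- reducing everything to $\beta_{-}<\gamma/(2-\gamma)$ --- is not sound on all of $\mathcal{K}_1$: the equivalence $\alpha^2+\beta^2<1\iff\beta<\gamma/(2-\gamma)$ comes from dividing $2\beta<\gamma(1+\beta)$ by $2-\gamma$, which degenerates at $\gamma=2$ and reverses for $\gamma>2$ (where $\gamma/(2-\gamma)<0<\beta_{-}$ yet the conclusion is true), and the remaining algebra with $\sqrt{D}$ is case-heavy rather than short. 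Stick with your first two steps, which already give a complete proof.
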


\begin{proof}
First, we remark that 
the function $f(\kappa,\gamma):=\alpha_{+}^2+\beta_{-}^2-1$ 
is continuous in $\mathcal{J}_1\cup \mathcal{J}_2$, 
and that $\mathcal{K}_1$ is a connected subset 
of $\mathcal{J}_1\cup \mathcal{J}_2$. 
By Lemma \ref{lem31}, $f$ has no zeros in $\mathcal{K}_1$. 
Thus, $f$ has a constant sign in $\mathcal{K}_1$. 
Finally, since $f(0,\gamma)\to -1$ as $\gamma\to \infty$, 
we conclude that $f(\kappa,\gamma)<0$ 
for all $(\kappa,\gamma)\in \mathcal{K}_1$. 
\end{proof}

In the same way as Lemma \ref{lem32}, 
we see that $\alpha_{+}^2+\beta_{-}^2>1$ 
for $(\kappa,\gamma)\in \mathcal{K}_2\cap \mathcal{J}_2$, 
but this fact is not used in what follows. 
The following Lemma \ref{lem33} plays an important role 
in the proof of Lemma \ref{lem35}. 

\begin{lemma}\label{lem33}
Let $(\kappa,\gamma)\in \mathcal{J}_1\cup \mathcal{J}_2$. 
Then, $\gamma \alpha_{+}>\kappa \beta_{-}$.
\end{lemma}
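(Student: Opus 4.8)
The plan is to substitute the explicit expressions for $\alpha_+$ and $\beta_-$ and reduce the assertion to an elementary algebraic inequality. Set $D=\kappa^2+2\gamma(\gamma-1)$. First I would record that $D>0$ on all of $\mathcal{J}_1\cup\mathcal{J}_2$: on $\mathcal{J}_2$ this is precisely the defining condition $\kappa>\sqrt{2\gamma(1-\gamma)}$, while on $\mathcal{J}_1$ one has $\gamma\ge 1$, and either $\gamma>1$ (forcing $2\gamma(\gamma-1)>0$) or $\gamma=1$ with $\kappa>0$ (forcing $D=\kappa^2>0$). Since $\alpha_+$ and $\beta_-$ share the positive denominator $2\kappa^2+\gamma^3$, it suffices to prove that the numerator of $\gamma\alpha_+-\kappa\beta_-$ is strictly positive.

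The key step is the identity
$$(2\kappa^2+\gamma^3)\bigl(\gamma\alpha_+-\kappa\beta_-\bigr)
=\gamma\bigl[(2-\gamma)\kappa+\gamma\sqrt D\,\bigr]-\kappa\bigl[\kappa^2+\gamma^2-\kappa\sqrt D\,\bigr]
=-\kappa D+(\kappa^2+\gamma^2)\sqrt D
=\sqrt D\,\bigl(\kappa^2+\gamma^2-\kappa\sqrt D\,\bigr),$$
where the polynomial part collapses because $\kappa^2+2\gamma^2-2\gamma=D$. Since $\sqrt D>0$, the lemma reduces to the single inequality $\kappa^2+\gamma^2>\kappa\sqrt D$.

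Finally I would verify $\kappa^2+\gamma^2>\kappa\sqrt D$ by comparing squares, which handles the signs $\kappa\le 0$ and $\kappa>0$ uniformly: a direct expansion gives
$$(\kappa^2+\gamma^2)^2-\kappa^2 D=(\kappa^2+\gamma^2)^2-\kappa^2\bigl(\kappa^2+2\gamma(\gamma-1)\bigr)=\gamma^4+2\kappa^2\gamma=\gamma(\gamma^3+2\kappa^2)>0,$$
so $\kappa^2+\gamma^2>|\kappa|\sqrt D\ge\kappa\sqrt D$, as required. I do not expect any genuine obstacle here: the argument is a short computation, and the only points demanding attention are the strict positivity of $D$ on the parameter region (so that $\sqrt D$ may be manipulated freely) and the use of the square comparison to avoid a separate case analysis for the sign of $\kappa$.
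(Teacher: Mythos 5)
Your proof is correct and follows essentially the same route as the paper: both reduce the claim to the inequality $(\kappa^2+\gamma^2)\sqrt{D}>\kappa D$ and then verify it by the square comparison $(\kappa^2+\gamma^2)^2-\kappa^2 D=\gamma^4+2\gamma\kappa^2>0$. The only (cosmetic) difference is that the paper dispatches $\kappa\le 0$ separately as trivial, while you absorb both signs into the single estimate $\kappa\sqrt{D}\le|\kappa|\sqrt{D}<\kappa^2+\gamma^2$.
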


\begin{proof}
Since $\gamma$, $\alpha_{+}$ and $\beta_{-}$ are positive, 
the inequality is trivial for the case $\kappa\le 0$. 
Let $\kappa>0$ and put $D=\kappa^2+2\gamma(\gamma-1)$. 
Then, $D>0$ and 
\begin{align*}
\gamma \alpha_{+}>\kappa \beta_{-} & \iff 
(2-\gamma)\gamma \kappa +\gamma^2\sqrt{D}
>\kappa(\kappa^2+\gamma^2)-\kappa^2\sqrt{D} \\
& \iff (\gamma^2+\kappa^2)\sqrt{D}>\kappa D. 
\end{align*}
Since $(\gamma^2+\kappa^2)^2-\kappa^2D=\gamma^4+2\gamma \kappa^2>0$, 
the last inequality holds. 
\end{proof}

We define 
\begin{equation}\label{def:ell}
\ell=\left\{\begin{array}{ll}
\alpha_{+}^2+\beta_{-}^2 &\quad \text{if} \hspace{2mm} 
(\kappa,\gamma)\in \mathcal{K}_1, \\
1 &\quad \text{if} \hspace{2mm} 
(\kappa,\gamma)\in \mathcal{K}_2\cup \mathcal{K}_3,
\end{array}\right.
\end{equation}
and for a given $(\kappa,\gamma)$,
\begin{align*}
&E_1=\{(x,y)\in ]0,\infty[^2:\kappa x+\gamma y\ge 1\}, \\
&E_2=\{(x,y)\in ]0,\infty[^2:\gamma x^2+2y^2\ge 2y,~ 
x^2+y^2\le \ell\}.
\end{align*}

In Lemmas \ref{lem35} and \ref{lem38}, 
we establish the structure of the set $E_1\cap E_2$ with respect to $(\kappa,\gamma)$.

\begin{lemma}\label{lem35}
If $(\kappa,\gamma)\in \mathcal{K}_1\cup \mathcal{K}_3$, 
then $E_1\cap E_2=\{(\alpha_{+},\beta_{-})\}$. 
\end{lemma}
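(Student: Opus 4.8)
The plan is to show the two inclusions $\{(\alpha_+,\beta_-)\}\subset E_1\cap E_2$ and $E_1\cap E_2\subset\{(\alpha_+,\beta_-)\}$. The first is essentially a bookkeeping check: by definition $(\alpha_+,\beta_-)\in\mathcal{S}_{\kappa,\gamma}$ satisfies $\kappa\alpha_++\gamma\beta_-=1$ and $\gamma\alpha_+^2+2\beta_-^2=2\beta_-$, so it lies on the boundaries of the regions defining $E_1$ and $E_2$; one only has to verify $\alpha_+^2+\beta_-^2\le\ell$, which is an equality when $(\kappa,\gamma)\in\mathcal{K}_1$ by \eqref{def:ell}, and follows from Lemma \ref{lem31} (giving $\alpha_+^2+\beta_-^2=1=\ell$) when $(\kappa,\gamma)\in\mathcal{K}_3$. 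So the content is entirely in the reverse inclusion.

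For the reverse inclusion, I would argue geometrically. The set $E_2$ is the intersection of the exterior of the ellipse $\gamma x^2+2y^2=2y$ with the closed disk of radius $\sqrt{\ell}$ centered at the origin, inside the open first quadrant; the set $E_1$ is a closed half-plane. Take any $(x_0,y_0)\in E_1\cap E_2$. The key is that the circle $x^2+y^2=\ell$ meets the ellipse in exactly one first-quadrant point, which by Lemma \ref{lem34}(1)--(2) together with Lemma \ref{lem31} (when $\ell=1$, $0<\gamma<1$) or directly with $\ell<1$ (when $(\kappa,\gamma)\in\mathcal{K}_1$, using Lemma \ref{lem32}) is precisely $(\alpha_+,\beta_-)$. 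Thus the arc of the circle $x^2+y^2=\sqrt{\ell}$ lying in the closed exterior of the ellipse and in the first quadrant is a single point, $(\alpha_+,\beta_-)$. The strategy is then to push $(x_0,y_0)$ radially outward to the circle $x^2+y^2=\ell$ while staying in $E_1\cap E_2$: scaling $(x,y)\mapsto t(x,y)$ with $t\ge 1$ keeps us in $E_1$ (since $\kappa x+\gamma y\ge 1$ is preserved for $\kappa\ge 0$, and for $\kappa<0$ one must be more careful — see below), keeps us outside the ellipse (the exterior region $\gamma x^2+2y^2\ge 2y$ is star-shaped with respect to directions pointing away from the bounded ellipse interior, more precisely $\gamma(tx)^2+2(ty)^2-2(ty)=t^2(\gamma x^2+2y^2)-2ty\ge t(t(\gamma x^2+2y^2-2y))\ge 0$ for $t\ge 1$), and increases $x^2+y^2$ up to $\ell$. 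At the endpoint we land on the unique intersection point $(\alpha_+,\beta_-)$, and then a convexity/monotonicity argument forces the original point to have been that point already.

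The main obstacle — and the place where Lemma \ref{lem33} enters — is handling the sign of $\kappa$ and making the radial-scaling argument actually pin down $(x_0,y_0)$ rather than merely its scaled image. The cleanest route is probably to observe that the line $\kappa x+\gamma y=1$ and the circle $x^2+y^2=\ell$ are tangent or nearly tangent at $(\alpha_+,\beta_-)$ in the relevant configuration: indeed $\gamma\alpha_+>\kappa\beta_-$ from Lemma \ref{lem33} says precisely that the outward normal direction $(\kappa,\gamma)$ to the line $E_1$ and the outward radial direction $(\alpha_+,\beta_-)$ satisfy $\det\begin{pmatrix}\kappa&\alpha_+\\\gamma&\beta_-\end{pmatrix}=\kappa\beta_--\gamma\alpha_+<0$, so the half-plane $E_1$ and the disk $\{x^2+y^2\le\ell\}$ touch the common boundary point $(\alpha_+,\beta_-)$ from opposite sides along the ellipse; this means the only point of the disk lying in the half-plane and outside the ellipse is $(\alpha_+,\beta_-)$ itself. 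Making this last ``opposite sides'' assertion rigorous — parametrizing the ellipse near $(\alpha_+,\beta_-)$ and checking that moving along it into $\{x^2+y^2<\ell\}$ forces $\kappa x+\gamma y<1$, which is exactly the sign statement of Lemma \ref{lem33} — is the crux. Once that local statement is established, a global argument (the exterior of the ellipse in the first quadrant intersected with the disk is connected, and $E_1$ cuts off everything but one point) finishes the proof, and the case distinction $\kappa\le 0$ versus $\kappa>0$ is absorbed into the sign computation in Lemma \ref{lem33}, which already treats $\kappa\le 0$ trivially.
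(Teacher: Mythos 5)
Your forward inclusion is fine, but the reverse inclusion contains a concrete false step and an admitted gap. The false step is the assertion that ``the arc of the circle lying in the closed exterior of the ellipse and in the first quadrant is a single point.'' Lemma \ref{lem34} only says that the circle $x^2+y^2=\ell$ and the ellipse $\gamma x^2+2y^2=2y$ \emph{intersect} in exactly one first-quadrant point; the portion of the circle lying in the closed exterior $\{\gamma x^2+2y^2\ge 2y\}$ is a whole arc, not a point. For instance, with $\kappa=0$, $\gamma=2$ one has $(\alpha_+,\beta_-)=(1/2,1/2)$ and $\ell=1/2$, the ellipse is the circle $x^2+(y-\tfrac12)^2=\tfrac14$, and every point $(\cos\theta,\sin\theta)/\sqrt2$ with $0<\theta\le\pi/4$ lies on $x^2+y^2=1/2$ and in the closed exterior of the ellipse. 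Hence your radial push does not land at $(\alpha_+,\beta_-)$, and the ``convexity/monotonicity argument'' that is supposed to recover the original point from its scaled image is never supplied. Your final paragraph points in the right direction --- Lemma \ref{lem33} is indeed used as a slope/sign condition at $(\alpha_+,\beta_-)$ --- but you explicitly leave the ``opposite sides'' assertion as an unproved crux, and a purely local statement at one boundary point cannot by itself give the global conclusion $E_1\cap E_2=\{(\alpha_+,\beta_-)\}$.

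For comparison, the paper closes exactly this gap with a short global argument. It first traps $E_2$ inside the wedge $E_3=\{y\le f_1(x),\ y\le f_2(x)\}$ bounded by the normal $f_1$ and the tangent $f_2$ of the circle $x^2+y^2=\alpha_+^2+\beta_-^2$ at $(\alpha_+,\beta_-)$: the tangent half-plane contains the whole disk, and the normal half-plane contains the exterior-of-ellipse part of the disk because, by Lemmas \ref{lem34}, \ref{lem31} and \ref{lem32}, the circle meets the ellipse only at $(\alpha_+,\beta_-)$ while the top of the circle lies inside the ellipse. It then checks the slope inequalities $-\alpha_+/\beta_-<-\kappa/\gamma<\beta_-/\alpha_+$ (the left one is precisely Lemma \ref{lem33}, the right one is $\kappa\alpha_++\gamma\beta_-=1>0$), which show that the line $\kappa x+\gamma y=1$ passes through the vertex of the wedge and leaves the rest of the wedge strictly in $\{\kappa x+\gamma y<1\}$, whence $E_1\cap E_2\subset E_1\cap E_3=\{(\alpha_+,\beta_-)\}$. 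Some such global enclosure step is what your proposal is missing.
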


\begin{proof}
Since it is clear that 
$\{(\alpha_{+},\beta_{-})\}\subset E_1\cap E_2$, 
we prove the inverse inclusion. 
By Lemmas \ref{lem34}, \ref{lem31} and \ref{lem32}, 
we see that the ellipse $\gamma x^2+2y^2=2y$ 
and the circle $x^2+y^2=\alpha_{+}^2+\beta_{-}^2$ 
intersect at only one point $(x,y)=(\alpha_{+},\beta_{-})$ in $\{(x,y):x>0\}$. 
The normal $y=f_1(x)$ and the tangent $y=f_2(x)$ 
of the circle $x^2+y^2=\alpha_{+}^2+\beta_{-}^2$ at the point 
$(x,y)=(\alpha_{+},\beta_{-})$ are given by 
$$f_1(x)=\frac{\beta_{-}}{\alpha_{+}}(x-\alpha_{+})+\beta_{-}, \quad 
f_2(x)=-\frac{\alpha_{+}}{\beta_{-}}(x-\alpha_{+})+\beta_{-},$$
and we see that $E_2\subset 
E_3:=\{(x,y):y\le f_1(x), ~ y\le f_2(x)\}$. 
By Lemma \ref{lem33} and by $\kappa \alpha_{+}+\gamma \beta_{-}=1$, 
we have $-\alpha_{+}/\beta_{-}<-\kappa/\gamma<\beta_{-}/\alpha_{+}$. 
That is, the slope of the line $\kappa x+\gamma y=1$ 
is less than that of the normal $y=f_1(x)$, 
and is greater than or equal to that of the tangent $y=f_2(x)$. 
Recalling that $(\alpha_{+},\beta_{-})$ is on the line $\kappa x+\gamma y=1$, 
we see that $E_1\cap E_2\subset E_1\cap E_3=\{(\alpha_{+},\beta_{-})\}$. 
This completes the proof. 
\end{proof}

\begin{lemma}\label{lem38}
If $(\kappa,\gamma)\in \mathcal{K}_2$, then $E_1\cap E_2$ is empty. 
\end{lemma}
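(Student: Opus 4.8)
The plan is to show that the two regions $E_1$ and $E_2$ are separated by a line, so that they cannot meet. Recall that for $(\kappa,\gamma)\in\mathcal{K}_2$ we have $\ell=1$ by \eqref{def:ell}, so $E_2=\{(x,y)\in\,]0,\infty[^2:\gamma x^2+2y^2\ge 2y,\ x^2+y^2\le 1\}$. The first step is to locate the relevant part of the ellipse $\gamma x^2+2y^2=2y$ relative to the unit circle. By Lemma \ref{lem34}, if $0<\gamma<1$ the ellipse meets the unit circle in $\{x>0\}$ at the single point $P_\gamma=(2\sqrt{1-\gamma}/(2-\gamma),\,\gamma/(2-\gamma))$, while if $\gamma\ge 1$ they do not meet at all in the open first quadrant. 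In either case the arc of the ellipse that bounds $E_2$ from outside lies inside the closed unit disk, and $E_2$ is a thin lens-shaped region hugging that arc. The key geometric observation is then: $E_2$ lies entirely in the half-plane $\kappa x+\gamma y<1$, which is exactly the complement of $E_1$.

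To make this precise I would use the tangent-line argument already employed in Lemma \ref{lem35}, but now run it at the extreme point $P_\gamma$ (for $0<\gamma<1$) rather than at $(\alpha_+,\beta_-)$. At $P_\gamma$ the circle $x^2+y^2=1$ has tangent line $\kappa_c(\gamma)x+\gamma y=1$ — this is precisely the content of the computation in Lemma \ref{lem31}, where substituting $P_\gamma$ into $\kappa x+\gamma y=1$ forces $\kappa=\kappa_c(\gamma)$. Since the closed unit disk lies on one side of this tangent, and $E_2\subset\{x^2+y^2\le 1\}$, we get $E_2\subset\{\kappa_c(\gamma)x+\gamma y\le 1\}$. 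Now for $(\kappa,\gamma)\in\mathcal{K}_2$ with $0<\kappa<1$ we have $0<\kappa<\gamma_c$-side condition, i.e. $\kappa<\kappa_c(\gamma)$ (equivalently $\gamma<\gamma_c(\kappa)$), so on $\{x>0\}$ the line $\kappa x+\gamma y=1$ lies strictly below (to the left of) the tangent line, whence $E_1\cap E_2\subset\{x>0,\ \kappa_c(\gamma)x+\gamma y\le 1,\ \kappa x+\gamma y\ge 1\}=\emptyset$. For $\kappa\le 0$ it is even easier: on $\{x>0,\ 0<y\le 1\}$ one has $\kappa x+\gamma y\le \gamma y\le \gamma\le 1$ with equality only if $\kappa=0,\gamma=1,y=1$, but the point $(0,1)$ is on the boundary of the quadrant and not in $E_2$'s interior region where $\gamma x^2+2y^2>2y$ fails; checking that endpoint directly disposes of it. Finally the case $\gamma\ge 1$ (only possible with $\kappa\le 0$ in $\mathcal{K}_2$, namely $\gamma=1$) is covered by the same elementary bound $\kappa x+\gamma y\le \gamma y\le 1$.

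The main obstacle is bookkeeping the several sub-cases of $\mathcal{K}_2$ and, in particular, handling the boundary points carefully — the region $E_2$ as defined uses closed inequalities, so points on the ellipse or the circle need separate verification that they do not accidentally satisfy $\kappa x+\gamma y\ge 1$ with equality. I expect the cleanest writeup to treat the generic case $0<\kappa<1$ via the tangent-line comparison (reusing Lemmas \ref{lem34} and \ref{lem31} verbatim), and then dispatch the degenerate strip $\kappa\le 0$ by the one-line estimate $\kappa x+\gamma y\le\gamma y\le\gamma\le 1$, noting that equality would force $(x,y)=(0,1)\notin\,]0,\infty[^2$ or a contradiction with $\gamma x^2+2y^2\ge 2y$. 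This reduces the whole lemma to elementary computations of the type already carried out in this subsection.
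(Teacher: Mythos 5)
Your overall strategy is the right one and is essentially the paper's: handle $\kappa\le 0$ by the elementary bound $\kappa x+\gamma y\le \gamma y<\gamma\le 1$ (valid since any point of $E_2$ has $x>0$ and $x^2+y^2\le 1$, hence $y<1$), and reduce the case $0<\kappa<\kappa_c(\gamma)$ to the critical line $\kappa_c(\gamma)x+\gamma y=1$. However, the step you use to establish the key containment $E_2\subset\{\kappa_c(\gamma)x+\gamma y\le 1\}$ is wrong as written. The line $\kappa_c(\gamma)x+\gamma y=1$ is \emph{not} the tangent to the unit circle at $P_\gamma=(2\sqrt{1-\gamma}/(2-\gamma),\gamma/(2-\gamma))$: the tangent there is $x_0x+y_0y=1$, i.e. $2\sqrt{1-\gamma}\,x+\gamma y=2-\gamma$, and the two lines coincide only when $\gamma=1$. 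The computation in Lemma \ref{lem31} shows only that the line \emph{passes through} $P_\gamma$, not that it is tangent; for $0<\gamma<1$ it is a secant of the circle. Consequently the closed unit disk does \emph{not} lie on one side of it, and the inference ``$E_2\subset\{x^2+y^2\le 1\}$ hence $E_2\subset\{\kappa_c(\gamma)x+\gamma y\le 1\}$'' fails: there is a nonempty circular segment of the disk on which $\kappa_c(\gamma)x+\gamma y>1$. The containment you want is nevertheless true, but it genuinely requires the ellipse constraint $\gamma x^2+2y^2\ge 2y$ in the definition of $E_2$, i.e. it is exactly the content of Lemma \ref{lem35} applied at the parameter $(\kappa_c(\gamma),\gamma)\in\mathcal{K}_3$ (where $(\alpha_+,\beta_-)=P_\gamma$ and $\ell=1$), which gives $E_1(\kappa_c)\cap E_2=\{P_\gamma\}$ with equality $\kappa_c\alpha_++\gamma\beta_-=1$ there.

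With that repair your argument closes and coincides with the paper's proof: fix $\gamma\in\,]0,1[$, observe that $E_2$ does not depend on $\kappa$, that $E_1(\kappa)\subset E_1(\kappa_c)$ strictly on $\{x>0\}$ when $\kappa<\kappa_c(\gamma)$, and that the unique point $(\alpha_+,\beta_-)$ of $E_1(\kappa_c)\cap E_2$ satisfies $\kappa\alpha_++\gamma\beta_-<1$, hence drops out of $E_1(\kappa)$. So the gap is localized and easily fixed by citing Lemma \ref{lem35} rather than a tangency property of the circle, but as written the central step does not stand.
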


\begin{proof}
First, we consider the case where $\kappa\le 0$ and $0<\gamma\le 1$. 
Then, $E_1\subset \{(x,y):y\ge 1/\gamma\}\subset \{(x,y):y\ge 1\}$, 
and we see that $E_1\cap E_2$ is empty. 

Next, we consider the case where $0<\gamma<1$ and $0<\kappa<\kappa_c(\gamma)$. 
We fix $\gamma\in ]0,1[$ and denote $E_1=E_1(\kappa)$ for $0<\kappa\le \kappa_c$. 
Remark that $E_2$ is independent of $\kappa\in ]0,\kappa_c]$. 
When $\kappa=\kappa_c$, by Lemma \ref{lem35}, 
we have $E_1(\kappa_c)\cap E_2=\{(\alpha_{+},\beta_{-})\}$. 
Moreover, when $0<\kappa<\kappa_c$, 
$E_1(\kappa)$ is strictly smaller than $E_1(\kappa_c)$. 
Thus, we see that $E_1(\kappa)\cap E_2$ is empty if $0<\kappa<\kappa_c$. 
\end{proof}

\subsection{Determination of Ground States}

We are now able to determine the structure of the set $\mathcal{G}_\omega$.
We use an idea of Sirakov \cite{sir} (see also \cite{kik}). 
We denote 
$$\|u\|_{H^1_{\omega}}^2=\|\nabla u\|_{L^2}^2+\omega \|u\|_{L^2}^2,
\quad u\in H^1(\R^N).$$

\begin{lemma}\label{lem41}
Let $\vec u=(u_1,u_2)\in \mathcal{A}_{\omega}$. Then we have 
\begin{align*}
&\|u_1\|_{H^1_{\omega}}^2
=\kappa \|u_1\|_{L^3}^3+\gamma \int_{\R^N}\overline{u_1}^2u_2\,dx, \\
&\|u_2\|_{H^1_{\omega}}^2
=\|u_2\|_{L^3}^3+\frac{\gamma}{2} \int_{\R^N}u_1^2\overline{u_2}\,dx. 
\end{align*}
\end{lemma}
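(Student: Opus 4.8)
The plan is to prove Lemma \ref{lem41} by a direct pairing argument. Since $\vec u=(u_1,u_2)\in \mathcal{A}_{\omega}$, by definition $S_{\omega}'(\vec u)=0$ in $X^*$, which is precisely the system \eqref{sp}: $-\Delta u_1+\omega u_1=\kappa|u_1|u_1+\gamma\overline{u_1}u_2$ and $-\Delta u_2+\omega u_2=|u_2|u_2+(\gamma/2)u_1^2$. The idea is to test the first equation against $u_1$ and the second against $u_2$ (in the complex $L^2$ sense) and integrate by parts.

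First I would multiply the first equation of \eqref{sp} by $\overline{u_1}$ and integrate over $\R^N$: the left-hand side yields $\int(\nabla u_1\cdot\nabla\overline{u_1}+\omega|u_1|^2)\,dx=\|u_1\|_{H^1_\omega}^2$ after integration by parts, while the right-hand side gives $\kappa\int|u_1|\,u_1\overline{u_1}\,dx+\gamma\int\overline{u_1}u_2\overline{u_1}\,dx=\kappa\|u_1\|_{L^3}^3+\gamma\int\overline{u_1}^2u_2\,dx$. Note that $\int|u_1|\,|u_1|^2\,dx=\|u_1\|_{L^3}^3$ is already real, so no real-part operator is needed on the first identity — and since the left side is real, the second term $\gamma\int\overline{u_1}^2u_2\,dx$ is automatically real too. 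Similarly, testing the second equation against $\overline{u_2}$ gives $\|u_2\|_{H^1_\omega}^2=\|u_2\|_{L^3}^3+(\gamma/2)\int u_1^2\overline{u_2}\,dx$, with the same remark that all quantities are real.

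The only technical point to address is the justification of the integration by parts and the legitimacy of pairing $S_\omega'(\vec u)$ with $\vec u$ itself: this is fine because $\vec u\in X=H^1(\R^N,\C)^2$ and the nonlinear terms are in the appropriate dual space (the quadratic nonlinearities map $H^1$ into $L^{3/2}\hookrightarrow H^{-1}$ for $N\le 3$ by Sobolev embedding), so $\dual{S_\omega'(\vec u)}{\vec u}=0$ is meaningful and decomposes into the two scalar identities above by the block structure of the equations. Concretely, $\dual{S_\omega'(\vec u)}{(u_1,0)}=0$ gives the first identity and $\dual{S_\omega'(\vec u)}{(0,u_2)}=0$ gives the second. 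I do not anticipate any real obstacle here; this lemma is a routine Pohozaev-type (in fact, just Nehari-type) manipulation, and the main "content" is simply recording these two identities cleanly for use in the subsequent comparison arguments. I would therefore keep the proof to two or three lines, writing: "Multiplying the equations in \eqref{sp} by $\overline{u_1}$ and $\overline{u_2}$ respectively, integrating over $\R^N$, and integrating by parts, we obtain the desired identities."
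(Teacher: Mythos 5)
Your proposal is correct and coincides with the paper's own argument: the authors likewise obtain the first identity by multiplying the first equation of \eqref{sp} by $\overline{u_1}$ and integrating by parts, and the second identity in the same way. The extra remarks you add (well-definedness of the pairing via $L^{3/2}\hookrightarrow H^{-1}$ for $N\le 3$, and the automatic reality of the cross terms) are accurate but not needed beyond what the paper records.
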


\begin{proof}
The first identity is obtained by 
mutliplying the first equation of \eqref{sp} by $\overline{u_1}$ 
and by integrating by parts. 
The second identity is obtained in the same way. 
\end{proof}

\begin{lemma}\label{lem42}
For any $\omega>0$, $6d(\omega)\le \ell \|\varphi_{\omega}\|_{L^3}^3$, 
where $\ell$ is the number defined by \eqref{def:ell}. 
\end{lemma}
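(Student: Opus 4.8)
\textbf{Proof proposal for Lemma \ref{lem42}.}

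The plan is to exhibit explicit test functions in the constraint set $\{K_\omega=0\}$ and estimate their action, using the identity \eqref{qq} that $S_\omega(\vec u)=\tfrac16\|\vec u\|_{X_\omega}^2$ whenever $K_\omega(\vec u)=0$, together with the fact (Lemma \ref{lem:comp}) that $d(\omega)=\mu(\omega)$ is the infimum of $S_\omega$ over that set. The natural candidates are the scalar semi-trivial profile $(0,\varphi_\omega)$ and the diagonal profile $(\alpha_+\varphi_\omega,\beta_-\varphi_\omega)$, since these are the two objects whose orbits appear in $\mathcal G_\omega^0$ and $\mathcal G_\omega^1$; in fact both already solve \eqref{sp}, hence satisfy $S_\omega'=0$, which in particular forces $K_\omega=0$ on each of them (pair \eqref{sp} with the solution itself and use Lemma \ref{lem41}). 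So each of them is admissible in the variational problem defining $\mu(\omega)$, and $d(\omega)=\mu(\omega)\le S_\omega$ evaluated there.

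First I would compute $S_\omega(0,\varphi_\omega)$. Since $\varphi_\omega$ solves \eqref{scalar}, we have $\|\varphi_\omega\|_{H^1_\omega}^2=\|\varphi_\omega\|_{L^3}^3$, hence by \eqref{qq}, $S_\omega(0,\varphi_\omega)=\tfrac16\|\varphi_\omega\|_{L^3}^3$. This already gives $6d(\omega)\le\|\varphi_\omega\|_{L^3}^3$, which is exactly the claim when $(\kappa,\gamma)\in\mathcal K_2\cup\mathcal K_3$, where $\ell=1$. Next I would compute $S_\omega(\alpha_+\varphi_\omega,\beta_-\varphi_\omega)$ for $(\kappa,\gamma)\in\mathcal K_1$, where $\ell=\alpha_+^2+\beta_-^2$. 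Using \eqref{qq} and the scaling $\|\alpha\varphi_\omega\|_{X_\omega}^2+\|\beta\varphi_\omega\|_{X_\omega}^2=(\alpha^2+\beta^2)\|\varphi_\omega\|_{H^1_\omega}^2=(\alpha^2+\beta^2)\|\varphi_\omega\|_{L^3}^3$, I get $S_\omega(\alpha_+\varphi_\omega,\beta_-\varphi_\omega)=\tfrac16(\alpha_+^2+\beta_-^2)\|\varphi_\omega\|_{L^3}^3=\tfrac{\ell}{6}\|\varphi_\omega\|_{L^3}^3$. Therefore $6d(\omega)\le\ell\|\varphi_\omega\|_{L^3}^3$ in the case $\mathcal K_1$ as well. (One should double-check that $(\alpha_+\varphi_\omega,\beta_-\varphi_\omega)$ is indeed a valid solution in $\mathcal K_1$, i.e. that $(\alpha_+,\beta_-)\in\mathcal S_{\kappa,\gamma}$ there; this is exactly Proposition \ref{prop1}(1), since $\mathcal K_1\subset\mathcal J_1\cup\mathcal J_2$.) For $\mathcal K_3$ one has additionally $\ell=1=\alpha_+^2+\beta_-^2$ by Lemma \ref{lem31}, so either test function gives the same bound; no separate argument is needed.

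The computation is essentially routine once one has \eqref{qq}, so there is no serious obstacle; the only point requiring a little care is the bookkeeping that the chosen profile genuinely lies in the constraint set $\{\vec u\in X: K_\omega(\vec u)=0,\ \vec u\ne 0\}$, which I would handle uniformly by noting that any solution of \eqref{sp} satisfies $K_\omega=0$ (add the two identities of Lemma \ref{lem41} and compare with the definitions of $\|\cdot\|_{X_\omega}^2$ and $V$). One then assembles the three cases according to the definition \eqref{def:ell} of $\ell$: in $\mathcal K_2$ use $(0,\varphi_\omega)$; in $\mathcal K_1$ use $(\alpha_+\varphi_\omega,\beta_-\varphi_\omega)$; in $\mathcal K_3$ either one works. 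In every case the value of $S_\omega$ at the test profile equals $\tfrac{\ell}{6}\|\varphi_\omega\|_{L^3}^3$, and since $d(\omega)=\mu(\omega)$ is a lower bound for such values we conclude $6d(\omega)\le\ell\|\varphi_\omega\|_{L^3}^3$, as claimed.
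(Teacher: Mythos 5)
Your proposal is correct and follows essentially the same route as the paper: both test $d(\omega)$ against profiles of the form $(\alpha\varphi_\omega,\beta\varphi_\omega)\in\mathcal{A}_\omega$ (which automatically satisfy $K_\omega=0$), evaluate $S_\omega$ there via \eqref{qq} to get $\tfrac{\alpha^2+\beta^2}{6}\|\varphi_\omega\|_{L^3}^3$, and conclude from the definitions of $d(\omega)$ and $\ell$. The only cosmetic difference is that you route the comparison through $\mu(\omega)$ and spell out the case bookkeeping, while the paper uses $d(\omega)=\inf_{\mathcal{A}_\omega}S_\omega$ directly.
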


\begin{proof}
Let $(\alpha \varphi_{\omega},\beta \varphi_{\omega})\in \mathcal{A}_{\omega}$. 
Then, we have $K_{\omega}(\alpha \varphi_{\omega},\beta \varphi_{\omega})=0$, 
and so by \eqref{qq}, 
\begin{align*}
S_{\omega}(\alpha \varphi_{\omega},\beta \varphi_{\omega})=\frac{1}{6}
\|(\alpha \varphi_{\omega},\beta \varphi_{\omega})\|_{X_{\omega}}^2
=\frac{\alpha^2+\beta^2}{6}\|\varphi_{\omega}\|_{H^1_{\omega}}^2.
\end{align*}
Moreover, since $\varphi_{\omega}$ is a solution of \eqref{scalar}, 
we have $\|\varphi_{\omega}\|_{H^1_{\omega}}^2=\|\varphi_{\omega}\|_{L^3}^3$, 
and $6S_{\omega}(\alpha \varphi_{\omega},\beta \varphi_{\omega})
=(\alpha^2+\beta^2)\|\varphi_{\omega}\|_{L^3}^3$. 
Finally, by the definitions of $d(\omega)$ and $\ell$, 
we obtain the desired estimate. 
\end{proof}

The following variational characterization of $\varphi_{\omega}$ is well-known 
(see \cite{caz,kik,sir,wil}), and we omit the proof. 

\begin{lemma}\label{lem43} 
Let $\omega>0$. Then 
\begin{equation}\label{min1}
\|\varphi_{\omega}\|_{L^3}
=\inf\left\{{\|v\|_{H^1_{\omega}}^2}/
{\|v\|_{L^3}^2}: v\in H^1(\R^N)\setminus\{0\}\right\}. 
\end{equation}
Moreover, 
\begin{align}
&\{v\in H^1(\R^N):\|v\|_{H^1_{\omega}}^2
=\|v\|_{L^3}^3=\|\varphi_{\omega}\|_{L^3}^3\} \nonumber \\
&=\{e^{i\theta}\varphi_{\omega}(\cdot+y):\theta\in \R,~ y\in \R^N\}.
\label{min2}
\end{align}
\end{lemma}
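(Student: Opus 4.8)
The plan is to read \eqref{min1} as the minimization of the scale-invariant Weinstein-type quotient
\[
J(v)=\frac{\|v\|_{H^1_{\omega}}^2}{\|v\|_{L^3}^2},\qquad v\in H^1(\R^N)\setminus\{0\},
\]
whose infimum $m$ is positive because $N\le 3$ gives the embedding $H^1(\R^N)\hookrightarrow L^3(\R^N)$; the goal is to show that $m$ is attained exactly at the phase rotations of the translates of $\varphi_{\omega}$. The first step is to shrink the competitor class. For complex $v$ the diamagnetic inequality gives $\|\nabla v\|_{L^2}^2\ge\|\nabla|v|\|_{L^2}^2$, while $\|v\|_{L^2}=\||v|\|_{L^2}$ and $\|v\|_{L^3}=\||v|\|_{L^3}$; hence $J(v)\ge J(|v|)$ and it suffices to minimize over nonnegative $v\in H^1(\R^N)$. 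Schwarz symmetrization preserves the $L^2$ and $L^3$ norms and does not increase $\|\nabla v\|_{L^2}$, so the infimum is attained along radially decreasing functions, and existence of a minimizer follows from the standard compactness argument for the single equation \eqref{scalar} — compactness of $H^1_{\mathrm{rad}}(\R^N)\hookrightarrow L^3(\R^N)$ for $N\ge 2$, and the explicit ODE analysis for $N=1$.

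The second step exploits the homogeneity of $J$. Given a nonnegative minimizer $v$, replacing it by $tv$ with $t=\|v\|_{H^1_{\omega}}^2/\|v\|_{L^3}^3$ we may assume $\|v\|_{H^1_{\omega}}^2=\|v\|_{L^3}^3$, i.e. $v$ lies on the Nehari set of \eqref{scalar}, on which the functional $\tfrac12\|\cdot\|_{H^1_{\omega}}^2-\tfrac13\|\cdot\|_{L^3}^3$ equals $\tfrac16\|\cdot\|_{H^1_{\omega}}^2$; since the Nehari constraint is natural, a minimizer of $\|\cdot\|_{H^1_{\omega}}^2$ on it is a free critical point, so $v$ solves $-\Delta v+\omega v=|v|v$. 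Being nonnegative and nontrivial, $v$ is positive by the strong maximum principle; it is radial about some point by Gidas--Ni--Nirenberg, and then $v=\varphi_{\omega}(\cdot+y)$ by uniqueness of the positive radial solution of \eqref{scalar}. Testing \eqref{scalar} against $\varphi_{\omega}$ gives $\|\varphi_{\omega}\|_{H^1_{\omega}}^2=\|\varphi_{\omega}\|_{L^3}^3$, hence $m=J(\varphi_{\omega})=\|\varphi_{\omega}\|_{L^3}$, which proves \eqref{min1}.

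For \eqref{min2}, the inclusion $\supset$ is immediate from $\|\varphi_{\omega}\|_{H^1_{\omega}}^2=\|\varphi_{\omega}\|_{L^3}^3$ together with translation and phase invariance. Conversely, suppose $\|v\|_{H^1_{\omega}}^2=\|v\|_{L^3}^3=\|\varphi_{\omega}\|_{L^3}^3$. Then $J(v)=\|\varphi_{\omega}\|_{L^3}=m$, so $v$ is a minimizer and so is $|v|$; since $|v|$ already satisfies $\| |v|\|_{H^1_{\omega}}^2=\| |v|\|_{L^3}^3$, the second step applies verbatim to $|v|$ and yields $|v|=\varphi_{\omega}(\cdot+y)$ for some $y$. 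In particular $|v|>0$ everywhere, so the equality $\|\nabla v\|_{L^2}^2=\|\nabla|v|\|_{L^2}^2$ in the diamagnetic inequality forces $v=e^{i\theta}|v|$ with a constant phase $\theta$; hence $v=e^{i\theta}\varphi_{\omega}(\cdot+y)$, which is \eqref{min2}.

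Essentially every step here is routine Nehari-manifold and rearrangement machinery; the only substantive inputs are the identification of the minimizer with $\varphi_{\omega}$ — which rests on the Gidas--Ni--Nirenberg symmetry result and the uniqueness of the positive radial solution of \eqref{scalar} — and the equality case of the diamagnetic inequality. I expect this identification/rigidity step to be the main point, since it is what upgrades the mere existence of a minimizer to the precise description \eqref{min2}.
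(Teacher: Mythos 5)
The paper does not actually prove Lemma \ref{lem43}: it is declared to be well-known and the authors simply point to \cite{caz,kik,sir,wil}. Your argument is therefore not competing with a written proof, but it is a correct and entirely standard way to establish both \eqref{min1} and \eqref{min2}, and it follows the same route as the cited references: minimize the scale-invariant quotient, reduce to nonnegative competitors by the diamagnetic inequality, produce a minimizer by rearrangement and compactness, normalize onto the Nehari set so that the minimizer is a free critical point of the action, identify it with $\varphi_{\omega}(\cdot+y)$ via the strong maximum principle, Gidas--Ni--Nirenberg symmetry and the uniqueness of the positive radial solution of \eqref{scalar}, and finally recover the constant phase from the equality case of the diamagnetic inequality. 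Two small points deserve a word of care. First, for $N=1$ the embedding of even $H^1$ functions into $L^3$ is not compact, so the phrase ``compactness of $H^1_{\rm rad}\hookrightarrow L^3$'' should there be replaced either by the pointwise bound $|u(x)|\le \|u\|_{L^2}(2|x|)^{-1/2}$ valid for symmetric decreasing $u$, which controls the $L^3$ tails, or by the explicit ODE solution as you suggest. Second, the equality case of the diamagnetic inequality only gives a locally constant phase on the set where $|v|>0$; your argument is fine precisely because you first prove $|v|=\varphi_{\omega}(\cdot+y)>0$ everywhere on the connected set $\R^N$, and it is worth making explicit that this positivity is what turns the locally constant phase into a global constant $\theta$.
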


The next lemma is linked to the key Lemmas \ref{lem35} and \ref{lem38}. 

\begin{lemma}\label{lem44}
Let $(u_1,u_2)\in \mathcal{G}_{\omega}$, and put 
$$a=\|u_1\|_{L^3}/\|\varphi_{\omega}\|_{L^3}, \quad 
b=\|u_2\|_{L^3}/\|\varphi_{\omega}\|_{L^3}.$$
Then, $a\ge 0$, $b>0$, and $(a,b)$ satisfies 
\begin{equation}\label{abab}
a^2\leq a^2(\kappa a+\gamma b), \quad 
2b\leq 2b^2+\gamma a^2, \quad a^2+b^2\le \ell.
\end{equation}
Moreover, 
\par \noindent $({\rm 1})$ \hspace{1mm}
If $(\kappa,\gamma)\in \mathcal{K}_1$, then $(a,b)=(\alpha_{+},\beta_{-})$. 
\par \noindent $({\rm 2})$ \hspace{1mm}
If $(\kappa,\gamma)\in \mathcal{K}_2$, then $(a,b)=(0,1)$. 
\par \noindent $({\rm 3})$ \hspace{1mm}
If $(\kappa,\gamma)\in \mathcal{K}_3$, 
then $(a,b)\in \{(\alpha_{+},\beta_{-}), (0,1)\}$. 
\end{lemma}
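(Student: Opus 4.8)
The plan is to first establish the basic inequalities \eqref{abab}, then use the geometric Lemmas \ref{lem35} and \ref{lem38} to pin down $(a,b)$. For the first part, let $(u_1,u_2)\in\mathcal{G}_\omega$. By Lemma \ref{lem:comp} we have $\mathcal{G}_\omega=\mathcal{M}_\omega$, so $K_\omega(u_1,u_2)=0$ and hence, by \eqref{qq} together with Lemma \ref{lem41}, $6d(\omega)=6S_\omega(u_1,u_2)=\|u_1\|_{H^1_\omega}^2+\|u_2\|_{H^1_\omega}^2$. Applying Lemma \ref{lem41} again to rewrite each $\|u_j\|_{H^1_\omega}^2$ and then estimating $\Re\int\overline{u_1}^2u_2\,dx\le\|u_1\|_{L^3}^2\|u_2\|_{L^3}$ (H\"older) and using the minimization property \eqref{min1} in the form $\|u_j\|_{H^1_\omega}^2\ge\|\varphi_\omega\|_{L^3}\|u_j\|_{L^3}^2$, I would derive the first two inequalities in \eqref{abab} after dividing by $\|\varphi_\omega\|_{L^3}^3$. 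The third inequality, $a^2+b^2\le\ell$, should follow by combining $6S_\omega(u_1,u_2)=\|u_1\|_{H^1_\omega}^2+\|u_2\|_{H^1_\omega}^2\ge\|\varphi_\omega\|_{L^3}(\|u_1\|_{L^3}^2+\|u_2\|_{L^3}^2)=(a^2+b^2)\|\varphi_\omega\|_{L^3}^3$ with the upper bound $6d(\omega)\le\ell\|\varphi_\omega\|_{L^3}^3$ from Lemma \ref{lem42}, since $S_\omega(u_1,u_2)=d(\omega)$.

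Next I would observe that $b>0$: if $u_2\equiv0$, then the first equation of \eqref{sp} forces $u_1$ to solve $-\Delta u_1+\omega u_1=\kappa|u_1|u_1$, which has a nontrivial $H^1$ solution only if $\kappa>0$, and in that case $(u_1,0)$ has action strictly larger than $(0,\varphi_\omega)$ (a quick computation using $6S_\omega=(a^2+0)\|\varphi_\omega\|_{L^3}^3$ with $a=\kappa^{-2}$ versus $b=1$, noting $\kappa<1$ is forced when $\mathcal{K}_2$ or $\mathcal{K}_3$ is relevant, or $a^2<1$ in $\mathcal{K}_1$ by Lemma \ref{lem32}); in any event this contradicts minimality, so $b>0$. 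With $b>0$, the second inequality in \eqref{abab} gives $(a,b)\in E_2$ after noting $a^2+b^2\le\ell$; and the first inequality, since $a^2\ge0$, reads $a^2(\kappa a+\gamma b-1)\ge0$. If $a>0$ this yields $\kappa a+\gamma b\ge1$, i.e. $(a,b)\in E_1$, so $(a,b)\in E_1\cap E_2$, which by Lemmas \ref{lem35} and \ref{lem38} equals $\{(\alpha_+,\beta_-)\}$ when $(\kappa,\gamma)\in\mathcal{K}_1\cup\mathcal{K}_3$ and is empty when $(\kappa,\gamma)\in\mathcal{K}_2$. If $a=0$ then the second inequality forces $b\le1$, and combined with $a^2+b^2\le\ell\le1$ and the requirement that $(0,b\varphi_\omega)$-type competitors achieve $d(\omega)$, one gets $b=1$; this is the case $(a,b)=(0,1)$.

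Finally I would assemble the three cases. In $\mathcal{K}_1$: $a=0$ is impossible because it would give $d(\omega)=6^{-1}\|\varphi_\omega\|_{L^3}^3>6^{-1}\ell\|\varphi_\omega\|_{L^3}^3$ is false — rather, one needs $d(\omega)=6^{-1}(a^2+b^2)\|\varphi_\omega\|_{L^3}^3$ and $(0,1)$ would give $6d(\omega)=\|\varphi_\omega\|_{L^3}^3>\ell\|\varphi_\omega\|_{L^3}^3$ is contradicted by Lemma \ref{lem42} precisely because $\ell=\alpha_+^2+\beta_-^2<1$ here (Lemma \ref{lem32}); so $a>0$ and $(a,b)=(\alpha_+,\beta_-)$. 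In $\mathcal{K}_2$: $a>0$ is impossible since $E_1\cap E_2=\emptyset$, so $a=0$ and $(a,b)=(0,1)$. In $\mathcal{K}_3$: $\ell=1$, so both $(0,1)$ and $(\alpha_+,\beta_-)$ (which satisfies $\alpha_+^2+\beta_-^2=1$ by Lemma \ref{lem31}) are consistent with the constraints, giving $(a,b)\in\{(\alpha_+,\beta_-),(0,1)\}$. The main obstacle I anticipate is the $a=0$ branch: carefully ruling it out in $\mathcal{K}_1$ and confirming $b=1$ there and in $\mathcal{K}_3$ requires tracking the relation between $d(\omega)$, $\ell$, and $\|\varphi_\omega\|_{L^3}^3$ precisely, and making sure the H\"older/minimization inequalities are tight exactly on the semi-trivial profile — which is where Lemma \ref{lem43}, especially \eqref{min2}, will be needed in the companion argument identifying the actual minimizers (presumably in a following lemma).
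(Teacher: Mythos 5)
Your overall strategy coincides with the paper's: derive \eqref{abab} from Lemma \ref{lem41}, H\"older, the variational characterization \eqref{min1} and Lemma \ref{lem42}, and then feed $(a,b)$ into the geometric Lemmas \ref{lem35} and \ref{lem38}. That part is correct. Two concrete points in your write-up do not hold up as stated, though both are easily repaired.

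First, the argument that $b>0$. You look at the \emph{first} equation of \eqref{sp} when $u_2\equiv0$ and are then forced into an action comparison with a putative solution $(u_1,0)$ of the scalar equation $-\Delta u_1+\omega u_1=\kappa|u_1|u_1$; that comparison is left unverified (the claim $a^2=\kappa^{-2}$ presumes $u_1$ is the positive ground state of the scalar problem, and the inequality $\kappa^{-2}>\ell$ is not checked, e.g.\ for $\kappa\ge1$ in $\mathcal{K}_1$). The detour is unnecessary: since $(u_1,u_2)\in\mathcal{A}_\omega$ it satisfies the \emph{second} equation of \eqref{sp}, which for $u_2=0$ reads $0=(\gamma/2)u_1^2$ and forces $u_1=0$, contradicting nontriviality. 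This is the paper's one-line argument. Second, in the branch $a=0$ you write that the second inequality of \eqref{abab} forces $b\le1$; it forces the opposite. Indeed $2b\le 2b^2+\gamma a^2=2b^2$ with $b>0$ gives $b\ge1$, while $a^2+b^2\le\ell\le1$ gives $b\le1$. The direction $b\ge1$ is precisely what you need both to exclude $a=0$ in $\mathcal{K}_1$ (where $\ell=\alpha_+^2+\beta_-^2<1$ by Lemma \ref{lem32}, so $b\ge1$ is incompatible with $b^2\le\ell$) and to conclude $b=1$ in $\mathcal{K}_2$ and $\mathcal{K}_3$; as written, your appeal to ``$(0,b\varphi_\omega)$-type competitors'' is a gap that the corrected inequality makes unnecessary. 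With these two fixes your proof is the paper's proof.
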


\begin{proof}
We first prove \eqref{abab}. 
If $u_2=0$, then the second equation of \eqref{sp} implies $u_1=0$. 
This contradicts $(u_1,u_2)\in \mathcal{A}_{\omega}$. 
Thus, $u_2\ne 0$ and $b>0$. 
By \eqref{min1}, Lemma \ref{lem41} and the H\"older inequality, 
we have 
\begin{align}
\|\varphi_{\omega}\|_{L^3}\|u_1\|_{L^3}^2
&\leq \|u_1\|_{H^1_{\omega}}^2
=\kappa \|u_1\|_{L^3}^3 +\gamma \int_{\R^N}\overline{u_1}^2u_2\,dx 
\nonumber \\
&\leq \kappa \|u_1\|_{L^3}^3 +\gamma \|u_1\|_{L^3}^2 \|u_2\|_{L^3}, 
\label{vin1}
\end{align}
which provides $a^2\leq a^2(\kappa a+\gamma b)$. 
In the same way, we have 
\begin{align}
\|\varphi_{\omega}\|_{L^3} \|u_2\|_{L^3}^2
&\leq \|u_2\|_{H^1_{\omega}}^2
=\|u_2\|_{L^3}^3 +\frac{\gamma}{2} \int_{\R^N}u_1^2\overline{u_2}\,dx 
\nonumber \\
&\leq \|u_2\|_{L^3}^3+\frac{\gamma}{2} \|u_1\|_{L^3}^2 \|u_2\|_{L^3}.
\label{vin2}
\end{align}
Since $b>0$, this gives $2b\leq 2b^2+\gamma a^2$. 
Finally, by Lemma \ref{lem42} and \eqref{min1}, we obtain 
\begin{equation}\label{vin3}
\ell \|\varphi_{\omega}\|_{L^3}^3
\ge 6d(\omega)=6S_{\omega}(\vec u)=\|\vec u\|_{X_{\omega}}^2
\geq \|\varphi_{\omega}\|_{L^3}\Big( \|u_1\|_{L^3}^2+\|u_2\|_{L^3}^2\Big),
\end{equation}
which implies $a^2+b^2\le \ell$. Hence, \eqref{abab} is proved. 

We now prove (1), (2) and (3). 
Let $(\kappa,\gamma)\in \mathcal{K}_1$. 
Then, since $\ell<1$, by Lemma \ref{lem44}, 
we see that $a>0$ and $(a,b)\in E_1\cap E_2$. 
Thus, (1) follows from \eqref{abab}. 
Next, let $(\kappa,\gamma)\in \mathcal{K}_2$. 
Suppose that $a>0$. Then, by \eqref{abab}, 
we have $(a,b)\in E_1\cap E_2$. 
However, this contradicts Lemma \ref{lem38}.
Thus, we have $a=0$ and $b=1$, which proves (2). 
Part (3) can be proved similarly. 
\end{proof}

\begin{proof}[Proof of Theorem \ref{thm6}]
We consider the case $(\kappa,\gamma)\in \mathcal{K}_1$. 
Let $\vec u\in \mathcal{G}_{\omega}$. 
By \eqref{vin1}, \eqref{vin2} and Lemma \ref{lem44}, we see that 
\begin{align}
&\|u_1/\alpha_{+}\|_{H^1_{\omega}}^2
=\|\varphi_{\omega}\|_{L^3}^3=\|u_1/\alpha_{+}\|_{L^3}^3, \label{win1} \\
&\|u_2/\beta_{-}\|_{H^1_{\omega}}^2
=\|\varphi_{\omega}\|_{L^3}^3=\|u_2/\beta_{-}\|_{L^3}^3, \label{win2} \\
&\int_{\R^N}u_1^2\overline{u_2}\,dx
=\|u_1\|_{L^3}^2\|u_2\|_{L^3}. \label{win3}
\end{align}
By \eqref{min2} and by \eqref{win1} and \eqref{win2}, 
there exist $(\theta_1,y_1)$, 
$(\theta_2,y_2)\in \R\times \R^N$ such that 
$u_1=e^{i\theta_1}\alpha_{+}\varphi_{\omega}(\cdot+y_1)$ and 
$u_2=e^{i\theta_2}\beta_{-}\varphi_{\omega}(\cdot+y_2)$. 
Moreover, by \eqref{win3}, we see that 
$2\theta_1-\theta_2\in 2\pi \Z$ and $y_1=y_2$. 
Thus, we have $\mathcal{G}_{\omega}\subset \mathcal{G}_{\omega}^1$. 
Since $\mathcal{G}_{\omega}$ is not empty, (1) is proved. 
(2) and (3) can be proved in the same way. 
\end{proof}

Finally, Theorem \ref{thm5} is obtained as a corollary of Theorem \ref{thm6}. 

\begin{proof}[Proof of Theorem \ref{thm5}]
Let $\kappa\le 0$ and $\gamma=1$. 
Then, $(\kappa,\gamma)\in \mathcal{K}_2$, 
and by Theorem \ref{thm6}, $\mathcal{G}_{\omega}=\mathcal{G}_{\omega}^0$. 
Thus, Theorem \ref{thm5} follows from Proposition \ref{prop5}. 
\end{proof}

\vspace{3mm}\noindent \textbf{Acknowledgments.}
The work of the second author was supported 
by JSPS Excellent Young Researchers Overseas Visit Program 
and by JSPS KAKENHI (21540163).


\begin{thebibliography}{99}

\bibitem{BC}H. Berestycki and T. Cazenave, 
{\em Instabilit\'{e} des \'{e}tats stationnaires dans les \'{e}quations de
Schr\"odinger et de Klein-Gordon non lin\'{e}aires}, 
{C. R. Acad. Sci. Paris s\'{e}r. I Math.} {\bf 293} (1981) 489--492. 

\bibitem{BrLi}H. Brezis and E. H. Lieb, 
{\em Minimum action solutions of some vector field equations}, 
{Comm. Math. Phys.} {\bf 96} (1984) 97--113. 

\bibitem{caz}T. Cazenave,
{\em Semilinear Schr\"odinger equations}, 
{Courant Lecture Notes in Mathematics 10}, 
Amer. Math. Soc., 2003. 

\bibitem{CL}T. Cazenave and P. L. Lions,
{\em Orbital stability of standing waves 
for some nonlinear Schr\"odinger equations,}
{Comm. Math. Phys.} {\bf 85} (1982) 549--561.

\bibitem{CC1} M. Colin and T. Colin, 
{\em On a quasi-linear Zakharov system describing laser-plasma interactions}, 
{Differential Integral Equations} {\bf 17} (2004) 297--330.

\bibitem{CC2} M. Colin and T. Colin, 
{\em A numerical model for the Raman amplification for laser-plasma interaction}, 
{J. Comput. App. Math.} {\bf 193} (2006) 535--562.

\bibitem{CCO1}M. Colin, T. Colin and M. Ohta, 
{\em Stability of solitary waves for a system of 
nonlinear Schr\"odinger equations with three wave interaction}, 
{Ann. Inst. H. Poincar\'{e}, Anal. Non Lin\'{e}aire} 
{\bf 26} (2009) 2211--2226. 

\bibitem{CCO2}M. Colin, T. Colin and M. Ohta, 
{\em Instability of standing waves for a system of 
nonlinear Schr\"odinger equations with three-wave interaction}, 
{Funkcial. Ekvac.} {\bf 52} (2009) 371--380. 

\bibitem{CO}M. Colin and M. Ohta, 
{\em Stability of solitary waves for derivative nonlinear Schr\"odinger equation},
{Ann. Inst. H. Poincar\'{e}, Anal. Non Lin\'{e}aire} {\bf 23} (2006) 753--764. 

\bibitem{CR}M. G. Crandall and P. H. Rabinowitz, 
{\em Bifurcation from simple eigenvalues}, 
{J. Funct. Anal.} {\bf 8} (1971) 321--340. 

\bibitem{GSS1}M. Grillakis, J. Shatah and W. Strauss, 
{\em Stability theory of solitary waves in the presence of symmetry, I}, 
{J. Funct. Anal.} {\bf 74} (1987) 160--197. 

\bibitem{kik}H. Kikuchi,
{\em Orbital stability of semitrivial standing waves 
for the Klein-Gordon-Schr\"{o}dinger system}, preprint. 

\bibitem{lio1}P. L. Lions,
{\em The concentration-compactness principle in the calculus of variations,
the locally compact case, part I},
{Ann. Inst. H. Poincar\'{e}, Anal. Nonlin\'{e}aire} {\bf 1} (1984) 109--145.

\bibitem{lio2}P. L. Lions,
{\em The concentration-compactness principle in the calculus of variations,
the locally compact case, part II}, 
{Ann. Inst. H. Poincar\'{e}, Anal. Nonlin\'{e}aire} {\bf 1} (1984) 223--282.

\bibitem{mae}M. Maeda, 
{\em Instability of bound states of nonlinear Schr\"{o}dinger equations 
with Morse index equal to two}, 
{Nonlinear Anal.} {\bf 72} (2010) 2100--2113. 

\bibitem{oht}M. Ohta, 
{\em Instability of bound states for abstract 
nonlinear Schr\"{o}dinger equations}, 
preprint, arXiv:1010.1511. 

\bibitem{pom}A. Pomponio, 
{\em Ground states for a system of nonlinear Schr\"odinger equations 
with three wave interaction}, 
{J. Math. Phys.} {\bf 51} (2010) 093513, 20pp. 

\bibitem{sha}J. Shatah, 
{\em Stable standing waves of nonlinear Klein-Gordon equations}, 
{Comm. Math. Phys.} {\bf 91} (1983) 313--327. 

\bibitem{SS}J. Shatah and W. Strauss,
{\em Instability of nonlinear bound states},
{Comm. Math. Phys.} {\bf 100} (1985) 173--190.

\bibitem{sir}B. Sirakov, 
{\em Least energy solitary waves for a system of nonlinear Schr\"odinger 
equations in $\R^N$}, 
{Comm. Math. Phys.} {\bf 271} (2007) 199--221. 

\bibitem{yew}A. C. Yew, 
{\em Stability analysis of multipulses in nonlinearly-coupled 
Schr\"{o}dinger equations}, 
{Indiana Univ. Math. J.} {\bf 49} (2000) 1079--1124. 

\bibitem{wei1}M. I. Weinstein, 
{\em Modulational stability of ground states 
of nonlinear Schr\"odinger equations}, 
{SIAM J. Math. Anal.} {\bf 16} (1985) 472--491. 

\bibitem{wei2}M. I. Weinstein, 
{\em Lyapunov stability of ground states 
of nonlinear dispersive evolution equations}, 
{Comm. Pure Appl. Math.} {\bf 39} (1986) 51--68. 

\bibitem{wil}M. Willem, 
{\em Minimax theorems}, 
Progress in Nonlinear Differential Equations and their Applications, 24. 
Birkh\"auser, Boston, MA, 1996. 

\end{thebibliography}
\end{document}